\tikzstyle{na} = [baseline=-.5ex]
\pgfplotsset{compat=1.6}
\pgfplotsset{soldot/.style={color=blue,only marks,mark=*}}
\pgfplotsset{holdot/.style={color=blue,fill=white,only marks,mark=*}}
\definecolor{cobalt}{rgb}{0.0, 0.28, 0.67}
\definecolor{brightcerulean}{rgb}{0.11, 0.67, 0.84}
\DeclareMathOperator*{\esssup}{ess\,sup}
\DeclareMathOperator*{\diam}{diam}
\def\Xint#1{\mathchoice
{\XXint\displaystyle\textstyle{#1}}%
{\XXint\textstyle\scriptstyle{#1}}%
{\XXint\scriptstyle\scriptscriptstyle{#1}}%
{\XXint\scriptscriptstyle\scriptscriptstyle{#1}}%
\!\int\limits}
\def\XXint#1#2#3{{\setbox0=\hbox{$#1{#2#3}{\int}$ }
\vcenter{\hbox{$#2#3$ }}\kern-.6\wd0}}
\def\dashint{\Xint-}
  \def\thefootnote{\ifnum\c@footnote>\z@\leavevmode\lower.5ex%
      \hbox{$^{\@arabic\c@footnote)}$}\fi}
\newtheoremstyle{mystyle}
  {}
  {}
  {\itshape}
  {}
  {\bfseries }
  { }
  { }
  {\thmname{#1}\thmnumber{ #2}\thmnote{ (#3)}}
\theoremstyle{mystyle}
\renewcommand{\@secnumfont}{\bfseries}
\renewcommand\section{\@startsection{section}{2}%
  \z@{-.5\linespacing\@plus-.7\linespacing}{.5\linespacing}%
  {\large\bfseries}}
\renewcommand\subsection{\@startsection{subsection}{3}%
  \z@{.5\linespacing\@plus.7\linespacing}{-.5em}%
  {\normalfont\bfseries}}
\newtheorem{thm}{Theorem}[section]
\newtheorem{lem}[thm]{Lemma}
\newtheorem{dfn}[thm]{Definition}
\renewcommand{\to}{\longrightarrow}
\renewcommand{\rho}{\varrho}
  \renewcommand{\geq}{\geqq}
  \renewcommand{\leq}{\leqq}
   \renewcommand{\rho}{\varrho}
\def \bR{\mathbb R}
\begin{document}
\titlepage

\title{Existence for doubly nonlinear fractional $p$-Laplacian equations}

\date{\textcolor{cyan}{\textbf{20210825 \,\,ver.}}}

\author{Nobuyuki Kato}
\address[Nobuyuki Kato]{Department of Mathematics, Nippon Institute of Technology, Saitama, Japan}
\email{nbykkato@nit.ac.jp}

\author{Masashi Misawa}
\address[Masashi Misawa]{Faculty of Advanced Science and Technology, Kumamoto University, Kumamoto 860-8555, Japan}
\email{mmisawa@kumamoto-u.ac.jp}

\author{Kenta Nakamura}
\address[Kenta Nakamura]{Headquarters for Admissions and Education, Kumamoto University, Kumamoto 860-8555, Japan}
\email{kntkuma21@kumamoto-u.ac.jp}

\author{Yoshihiko Yamaura}
\address[Yoshihiko Yamaura]{Department of Mathematics, College of Humanities and Sciences, Nihon University, Tokyo, Japan}
\email{yamaura@math.chs.nihon-u.ac.jp}

\thispagestyle{firstpage}

\setcounter{page}{1}

 
 %
\keywords{Nonlocal doubly nonlinear equation, Fractional Sobolev space, Space-time fractional Sobolev inequalities}
\medskip
\subjclass[2010]{Primary: 35B45, 35B65, \quad Secondary: 35D30, 35K61}

 \maketitle

 \begin{abstract}
We prove the existence of a weak solution to a doubly nonlinear parabolic fractional $p$-Laplacian equation, which has general doubly nonlinearlity including not only the Sobolev subcritical/critical/supercritical cases but also the slow/fast diffusion ones. Our proof reveals the weak convergence method for the doubly nonlinear fractional $p$-Laplace operator.
\end{abstract}

\tableofcontents

 \section{Introduction and Results}
The aim of this paper is the study of existence of a weak solution to doubly nonlinear parabolic nonlocal equations having the fractional $p$-Laplace operator as the principal part. The double nonlinearity is treated under the general settings including not only the Sobolev subcritical/critical/supercritical cases but also the slow/fast diffusion ones.
%
%
Let $\Omega \subset \mathbb{R}^n$ ($n \geq 1$) be a bounded domain with boundary $\partial \Omega$. We fix indices $s \in (0,1)$, $p>1$ and $q>0$. 
We are interested in the doubly nonlinear nonlocal problem: 
\begin{equation} \label{maineq}
\begin{cases}
\,\,\partial_t(|u|^{q-1}u)+(-\Delta)_p^su=0 &\textrm{in}\quad \Omega_\infty:=\Omega \times (0,\infty), \\
%
\,\,u=0 &\textrm{on}\quad \!\!(\mathbb{R}^n \setminus \Omega)\times(0,\infty) , \\
%
\,\,u(\cdot, 0)=u_0(\cdot ) &\textrm{in}\quad \Omega, 
\end{cases}
\end{equation}
where the initial datum $u_0$ belongs to both the fractional Sobolev space $W^{s,p}_0 (\Omega)$ and the Lebesgue space $L^{q+1}(\Omega)$.
Here we remark that $W^{s,p}_0 (\Omega)$ consists of functions in the fractional Sobolev space $W^{s,p} (\bR^n)$ vanishing outside $\Omega$.
The precise definition of these function spaces will be given in Section~\ref{Sect. 2}.
The fractional $p$-Laplace operator $(-\Delta)_p^s$ is defined by
\begin{align}\label{frac. p-Lap.}
(-\Delta)_p^s u(x,t)&:=\mathrm{P.V.}\int_{\mathbb{R}^n} \frac{|u(x,t)-u(y,t)|^{p-2}(u(x,t)-u(y,t))}{|x-y|^{n+sp}}\,dy \notag\\
&=\lim_{\varepsilon \searrow 0} \int_{\mathbb{R}^n \setminus B_\varepsilon(x)} \frac{|u(x,t)-u(y,t)|^{p-2}(u(x,t)-u(y,t))}{|x-y|^{n+sp}}\,dy,
\end{align}
where the symbol $\mathrm{P.V.}$ means ``in the principal value sense''.

\medskip

Let us introduce our interest in~\eqref{maineq} motivated by an energy structure (refer to~\cite{DiCKP1, DiCKP2, IMS}). For this purpose, we shall simply demonstrate a formal derivation of the equation~\eqref{maineq}$_1$. Consider the integral functional
\[
\mathcal{E}(u):=\frac{1}{2p}\iint_{\bR^n \times \bR^n} \frac{|u(x)-u(y)|^p}{|x-y|^{n+sp}}\,dxdy
\]
for $u \in W^{s, p}_0 (\Omega)$. The G\^{a}teaux differential of $\mathcal{E}(u)$ in the direction $\varphi \in W^{s, p}_0 (\Omega)$ turns out to be
%
\begin{align*}
\frac{d}{d\tau}\mathcal{E}(u+\tau \varphi)\Bigg|_{\tau=0}&= \frac{1}{2p} \iint_{\bR^n \times \bR^n}\frac{d}{d\tau}\,\frac{|u(x)-u(y)+\tau (\varphi(x)-\varphi(y))|^p}{|x-y|^{n+sp}} \,dxdy \,\Bigg|_{\tau=0} \\[2mm]
&=\frac{1}{2}\iint_{\bR^n \times \bR^n} \frac{|u(x)-u(y)|^{p-2}(u(x)-u(y))}{|x-y|^{n+sp}}(\varphi(x)-\varphi(y))\,dxdy \\[2mm]
&=\langle (-\Delta)^s_pu,\,\varphi \rangle,
\end{align*}
where the symbol $\langle \cdot\,, \,\cdot \rangle$ represents the dual pairing between $W^{s,p}_0(\Omega)$ and its dual space $W^{-s,p^\prime}(\Omega)$.
This implies that $(-\Delta)_p^s u$ is the gradient vector field $\nabla \mathcal{E}(u)$ on $W^{s,p}_0(\Omega)$:
\[
\nabla \mathcal{E}(u)=(-\Delta)^s_pu
\]
and hence, the equation~\eqref{maineq}$_1$ is interpreted as a nonlinear generalization of the usual gradient flow equation $\partial_tu(t)=-\nabla \mathcal{E}(u(t))$. We can also refer to \cite{KP} about a variety of interesting problems, their results and the progress of research on the fractional operator.
\medskip

The main result of this paper is the global existence of a weak solution to~\eqref{maineq} as follows:
\begin{thm}\label{mainthm}
Suppose that the initial datum $u_0$  belongs to $W^{s,p}_0(\Omega) \cap L^{q+1}(\Omega)$. Then, there exists a weak solution to~\eqref{maineq} in the sense of Definition~\ref{def. of weak sol.}. The weak solution $u$ of~\eqref{maineq} satisfies the regularity: $\partial_t (|u|^{\frac{q - 1}{2}} u) \in L^2 (\Omega_\infty)$.
\end{thm}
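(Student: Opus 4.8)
The plan is to construct the weak solution by a time-discretization (Rothe/minimizing-movements) scheme, exploiting the gradient-flow structure highlighted in the introduction, and then to pass to the limit using a weak-convergence argument for the fractional $p$-Laplacian. Fix a time step $h>0$ and, starting from $u_0$, define recursively $u_k=u_k^h$ as a minimizer of the functional
\[
v \longmapsto \frac{1}{h}\int_\Omega \Phi(v)-\Phi(u_{k-1})-(|u_{k-1}|^{q-1}u_{k-1})(v-u_{k-1})\,dx + \mathcal{E}(v)
\]
over $W^{s,p}_0(\Omega)\cap L^{q+1}(\Omega)$, where $\Phi(v)=\frac{1}{q+1}|v|^{q+1}$ is the convex primitive of $v\mapsto |v|^{q-1}v$. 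Existence of each minimizer follows from the direct method: the functional is coercive on $W^{s,p}_0(\Omega)$ (by the fractional Sobolev/Poincar\'e inequality controlling $\mathcal{E}$ from below) and weakly lower semicontinuous (convexity of $\Phi$ and of $\mathcal{E}$), and $q>0$ guarantees convexity of $\Phi$. The Euler--Lagrange equation is the discrete scheme
\[
\frac{|u_k|^{q-1}u_k-|u_{k-1}|^{q-1}u_{k-1}}{h}+(-\Delta)^s_p u_k=0 \quad\text{in }W^{-s,p'}(\Omega).
\]

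Next I would derive the a priori estimates. Testing the discrete equation with $u_k$ and using the elementary convexity inequality $(|a|^{q-1}a-|b|^{q-1}b)(a-b)\ge c_q\bigl(|a|^{\frac{q-1}{2}}a-|b|^{\frac{q-1}{2}}b\bigr)^2$ (valid for $q>0$; this is the crucial algebraic lemma, presumably established earlier in the paper) gives, after summation in $k$, the two basic bounds: $\sup_k \mathcal{E}(u_k)\le \mathcal{E}(u_0)$ and
\[
\sum_k \frac{1}{h}\bigl\|\,|u_k|^{\frac{q-1}{2}}u_k-|u_{k-1}|^{\frac{q-1}{2}}u_{k-1}\,\bigr\|_{L^2(\Omega)}^2 \le C\,\mathcal{E}(u_0).
\]
Building the piecewise-constant and piecewise-affine interpolants $\bar u_h,\, u_h$ in time, these translate into: $\bar u_h$ bounded in $L^\infty(0,\infty;W^{s,p}_0(\Omega))$, $|\bar u_h|^{\frac{q-1}{2}}\bar u_h$ bounded in $L^\infty(0,\infty;L^2(\Omega))$, and $\partial_t\bigl(|u_h|^{\frac{q-1}{2}}u_h\bigr)$ bounded in $L^2(\Omega_\infty)$ — this last estimate is exactly the asserted regularity, so it is obtained for free from the scheme and survives the limit by weak lower semicontinuity of the norm.

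The passage to the limit is the heart of the argument. From the bounds above one extracts (along a subsequence) $\bar u_h\rightharpoonup u$ weakly-$*$ in $L^\infty(0,\infty;W^{s,p}_0(\Omega))$ and, via an Aubin--Lions--Simon type compactness argument applied to $w_h:=|u_h|^{\frac{q-1}{2}}u_h$ (bounded in $L^\infty L^2$ with $\partial_t w_h$ bounded in $L^2$, and $u_h$ bounded in $W^{s,p}_0$ which embeds compactly into $L^p(\Omega)$), one gets $w_h\to w$ strongly in $L^2_{loc}$ and hence, by the strict monotonicity of $r\mapsto |r|^{\frac{q-1}{2}}r$, pointwise a.e. convergence $\bar u_h\to u$. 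The remaining obstacle — and the step I expect to be genuinely delicate — is identifying the weak limit of the nonlinear term $(-\Delta)^s_p\bar u_h$ as $(-\Delta)^s_p u$. Here the pointwise convergence of $\bar u_h$ together with the uniform $W^{s,p}$ bound gives, via Vitali/Fatou on the double integral, that the Gagliardo-difference quotients $\frac{\bar u_h(x)-\bar u_h(y)}{|x-y|^{(n+sp)/p}}$ converge in measure and are bounded in $L^p(\bR^n\times\bR^n)$, hence converge weakly in $L^p$ and their nonlinear companions $|{\cdot}|^{p-2}({\cdot})$ converge weakly in $L^{p'}$; combined with a Minty--Browder monotonicity trick (testing the equation with $\bar u_h-\varphi$ and using monotonicity of $(-\Delta)^s_p$) and the energy inequality $\limsup_h \int\!\!\int |\bar u_h-\bar u_h|^p/|x-y|^{n+sp}$ controlled by the limit, one upgrades to the correct identification. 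Finally, assembling the limits in the time-integrated weak formulation, checking the initial condition $u(\cdot,0)=u_0$ (using the $L^2$-continuity in time of $w$ and strong convergence at $t=0$ built into the scheme), and the exterior condition $u=0$ on $(\bR^n\setminus\Omega)\times(0,\infty)$ (preserved since each $u_k\in W^{s,p}_0(\Omega)$), yields a weak solution in the sense of Definition~\ref{def. of weak sol.} with the stated regularity.
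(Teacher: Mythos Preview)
Your overall architecture (Rothe scheme, the two energy tests with $u_k$ and with $u_k-u_{k-1}$, Vitali for the nonlinear fractional term) matches the paper's, but the compactness step contains a genuine gap. You invoke Aubin--Lions--Simon on $w_h=|u_h|^{\frac{q-1}{2}}u_h$, citing boundedness of $w_h$ in $L^\infty_tL^2_x$ and of $\partial_t w_h$ in $L^2$, while the only \emph{spatial} compactness you possess is for $u_h\in W^{s,p}_0$. Standard Aubin--Lions requires the spatial compactness on the same object whose time derivative is controlled; here $L^2(\Omega)$ embeds compactly into nothing useful, and transferring the fractional regularity from $u_h$ to $w_h$ through the map $r\mapsto|r|^{\frac{q-1}{2}}r$ is precisely the obstacle. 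For $q>1$ this map is not globally Lipschitz, so composition does not preserve $W^{s,p}$ without an $L^\infty$ bound on $u_h$---and the paper explicitly declines to assume boundedness of the initial datum for Theorem~\ref{mainthm}. For $0<q<1$ the map is only H\"older, so any fractional chain rule costs both differentiability and integrability in a way that must be tracked carefully. Nonlinear Aubin--Lions lemmas adapted to doubly nonlinear equations do exist, but they are substantial results in their own right and cannot be invoked as routine.

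The paper's remedy is exactly the content of Section~\ref{Sect. 4} and Appendix~\ref{Appendix C}: one truncates $(u_m)_\pm$ from above and below by $\ell$ and $1/\ell$ to obtain $(u_h)^{(\ell)}_\pm$, proves a time-derivative bound for the truncations directly from the energy estimate~\eqref{energy est. NE3} (Lemma~\ref{truncate energy}, with a nontrivial case split when $0<q<1$), and then applies a bespoke space-time fractional Sobolev interpolation inequality (Lemma~\ref{FSineq.II}) to place $(u_h)^{(\ell)}_\pm$ in $W^{s',1}(K_T)$, which embeds compactly into $L^\gamma(K_T)$. A Chebyshev-type tail estimate (Lemma~\ref{Chebyshev}) then allows $\ell\to\infty$ uniformly in $h$, yielding strong $L^\gamma$ convergence and hence a.e.\ convergence of $u_h$ and $\bar u_h$ themselves. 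Once that is secured, the Minty--Browder trick is unnecessary: Vitali on the double integral, as you already anticipate, handles the fractional $p$-Laplacian term directly. Finally, note that condition~(D3) requires $u(t)\to u_0$ in the full $W^{s,p}(\bR^n)$ norm, not merely $L^2$-continuity of $w$; the paper obtains this from the monotonicity $[\bar u_h(t)]_{W^{s,p}}\le[u_0]_{W^{s,p}}$ combined with weak convergence and the Radon--Riesz theorem, which your sketch does not address.
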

\medskip

The global existence of a weak solution in the fractional Sobolev space as in Theorem ~\ref{mainthm} is shown under the general double nonlinearity. The concentration only on the prototype equation makes it possible to describe clearly the approach for the proof of the existence. The proof is usually based on the weak compactness method with an appropriate approximation. 
A space-time fractional Sobolev inequality is devised and then, induce the weak convergence for the fractional $p$-Laplace operator with combination of the Vitali convergence theorem.
The approximate equation used here is given by the difference integro-differential equations of the so-called Rothe type, associated to~\eqref{maineq} (see~\cite{Rothe}). The approximate solutions are constructed by an application of the direct method in the calculus of variations, which enable us to treat the general double nonlinearity with $ p>1$, $q >0$ and $s \in (0, 1)$ in the equation. 
We shall mention the boundary condition in~\eqref{maineq}. The nonlocal Dirichlet boundary condition~\eqref{maineq}$_2$, which may look weird at a first glance, is suitable and consistent with the nonlocal character of the fractional operator. Here we do \emph{not} require that $C^\infty_0 (\Omega)$ is dense in $W^{s, p}_0 (\Omega)$. In fact, such a density property does not necessarily hold for a general domain without the smoothness of boundary  (see~\cite{FSV} and~\cite[Remark 3.5, p. 533]{Hitchhiker}). Here the equation is considered on any bounded domain $\Omega$ in $\bR^n$ which of the boundary is not necessarily smooth.

\medskip

For the parabolic fractional $p$-Laplace equation of the regular form (Eq.~\eqref{maineq} with $q = 1$), the existence of weak solutions have been studied under weaker conditions for initial data (see \cite{Puhst, Mazon-Rossi-Toledo, Vazquez, AABP}). Here the doubly nonlinear parabolic fractional $p$-Laplace equation~\eqref{maineq} of fairly general form is considered and the  existence of a solution is studied in the energy class of the fractional Sobolev space, as explained before. 
Our proof of the existence in the energy class develops a weak convergence method, based on certain space-time fractional Sobolev inequality and Vitali's convergence theorem.
To the best of our knowledge, that includes a new content for evolutionary fractional differential equations. 
In fact, a truncation technique and a space-time fractional Sobolev inequality are devised and combined with Vitali's convergence theorem to derive the space-time Lebesgue strong convergence. See Lemmata~\ref{truncate energy},~\ref{Cauchy},~\ref{Chebyshev} and~\ref{Cauchyuh} and,  Lemma~\ref{FSineq.II} in Appendix~\ref{Appendix C}. Our truncation argument may be somehow technical and, however, clearly overcomes some difficulties stemming from the power nonlinearity in the time derivative. The Lebesgue strong convergence is naturally applied to take the limit in the nonlinear integral quantities (see the proof of Lemma~\ref{convergence result} for details). It is also shown that the initial datum is continuously attained in the fractional Sobolev space, whose proof can be seen in the proof of (D3).   

\medskip

In our future work we shall consider the gradient flow associated with the best constant of fractional Sobolev inequality, where our approach requires the prototype equation as in~\eqref{maineq} as a fundamental tool. See~\cite{BMS} for the topics of the best constant and its extremal function in the fractional Sobolev inequality. In the usual local setting we have studied the $p$-Sobolev flow, which is the gradient flow concerning the usual Sobolev inequality and based on the prototype doubly nonlinear parabolic equation (refer to~\cite{Nakamura-Misawa,Kuusi-Misawa-Nakamura1,Kuusi-Misawa-Nakamura2}). The related results for PDEs can be seen in~\cite{Alt-Luckhaus, Chen-Hong-Hungerbuhler, DiBenedetto1, Hungerbuhler}.
A certain variational inequality method is well adopted to the doubly nonlinear PDEs in~\cite{BDMS1} with its precise description and comprehensive reference on related equations. Here we shall explain another motive from the geometric perspective: Given a compact Riemannian manifold $(M^n,g_0)$ of dimension $n \geq 2$ and a constant $s \in (0,\frac{n}{2})$, Graham and Zworski (\cite{Graham-Zworski}) originally introduced, in their study of a family of conformally covariant operators, the nonlocal evolutionary equation for a metric $g(t)=u^{\frac{4}{n-2s}}(t)g_0$ on $M$, called a \emph{fractional Yamabe flow}. When $M=\mathbb{S}^n$ in particular, through the stereographic projection $\pi: \mathbb{S}^n \setminus \{\mathrm{north pole}\} \to \mathbb{R}^n$,  the fractional Yamabe flow is written in the form 
 \begin{equation}\label{Yamabe1}
\partial_t \left(u^{\frac{n+2s}{n-2s}}(t)\right)=-(-\Delta)^su(t)+r^g_s u^{\frac{n+2s}{n-2s}}(t) \quad \textrm{in}\,\,\,\bR^n,
 \end{equation}
 where $(-\Delta)^su$ denotes the usual fractional Laplacian defined by~\eqref{frac. p-Lap.} with $p=2$. The quantity $r^g_s:=\dashint_M R^g_s\,dvol_g$ in~\eqref{Yamabe1} is the average of the fractional order curvature $R^g_s$ that coincides with the classical scalar curvature in the case $s=1$.
If $s=1$ specifically, then the fractional Yamabe flow~\eqref{Yamabe1} becomes exactly the classical Yamabe flow introduced by Hamilton (\cite{Hamilton}). The stationary problem for the fractional Yamabe flow is the fractional Yamabe problem, for which the existence of a solution
is shown in~\cite{GQ} under the assumption that $s \in (0,1)$ and that the domain manifold $M$ is locally conformally flat with positive Yamabe constant. Jing and Xiong (\cite{Jing-Xiong}) succeeded in obtaining the global existence for the fractional Yamabe flow~\eqref{Yamabe1} and the asymptotic convergence of~\eqref{Yamabe1} to the stationary solution. Recently, when the domain manifold is locally conformally flat with positive Yamabe constant, Daskalopoulous, Sire and V\'{a}zquez proved the global existence of a smooth solution to the fractional Yamabe flow and its convergence as $t \to \infty$ to a metric of constant scalar curvature metric (see \cite{DSV}). 
In the Euclidean space, the metric is flat and the above curvature conditions are not verified and thus, there may occur some difficulties from the viewpoint of PDEs. In the outstanding result of~\cite{dPQRV}, the existence and uniqueness for more general fractional porous medium equations is proved via the Fourier analysis and $L^1$-semigroup theory. In this paper, we are forced to consider on the bounded domain $\Omega \subset \bR^n$ and take a direct approach dictated by the structure of fractional $p$-Laplacian term inspired by the energy $\mathcal{E}$, as mentioned above. Our results presented here can cover those of the fractional Yamabe flow in the Euclidean case.
\medskip

The paper is organized as follows. In Section~\ref{Sect. 2}, we fix the notation and summarize some fundamental facts involved with the fractional Sobolev space. Section~\ref{Sect. 3} is devoted to constructing approximate solutions of~\eqref{maineq} and some uniform energy estimates for them. The approximate solutions are given by the time-difference integro-differential  equations~\eqref{NE} associated with \eqref{maineq}.
In Section~\ref{Sect. 4}, we derive the convergence of approximate solutions on the basis of the energy estimates in the previous section. In Section~\ref{Sect. 5}, we show Theorem~\ref{mainthm} by the weak convergence method. Appendix~\ref{Appendix A} is devoted to the proof of the existence of approximate solutions to~\eqref{NE} via the direct method in the calculus of variations. In Appendix~\ref{Appendix B}, we show the boundedness of our approximate solutions under the assumption of that of the initial datum. As an application, we deduce a regularity of $\partial_t\left(|u|^{q-1}u\right)$ in the case $q \geq 1$, as seen in Theorem~\ref{mainthm3}. Here we stress that our main result, Theorem~\ref{mainthm} does not require any pointwise boundedness of approximate solutions. In the final Appendix~\ref{Appendix C} we report our key idea, the space-time fractional Sobolev inequality.\bigskip

\noindent
\textbf{Acknowledgements}
\medskip

Masashi Misawa acknowledges the partial support by the Grant-in-Aid for Scientific Research (C) Grant No.21K03330 (2021) at JSPS. The work by Kenta Nakamura was also partially supported by Grant-in-Aid for Young Scientists Grant No.21K13824 (2021) at JSPS. Yoshihiko Yamaura was also partially supported by Grant-in-Aid for Scientific Research (C) Grant No.18K03381 (2018) at JSPS, and by Individual Research Expense of College of Humanities and Sciences at Nihon University for 2020.
%
%
%
%
\section{Preliminaries}\label{Sect. 2}
We shall split this section in three parts: first, we display our notation, then we collect some inequalities used in the course of the paper, and finally we introduce the definition of weak solutions to our problem~\eqref{maineq}.
\subsection{Notation}

In this paper, let $\Omega \subset \mathbb{R}^n\,\,(n \geq 1)$ be a bounded domain with boundary $\partial \Omega$ and denote for simplicity by $\Omega^c:=\bR^n \setminus \Omega$
the outside of $\Omega$.
For $T \in (0,\infty]$,
let $\Omega_T:=\Omega \times (0,T)$ be a space-time cylinder. 
We shall denote in a standard way $B_\rho(x_0):=\left\{ x \in \bR^n: |x-x_0|<\rho\right\}$,  the open ball with radius $\rho>0$ and center $x_0 \in \mathbb{R}^n$. We adopt the convention of writing $B_\rho$ instead of $B_\rho(x_0)$, when the center is origin $0$ of $\bR^n$, or when the center is clear from the context. For a measurable set $A \subset \bR^k$, $k \in \mathbb{N}$, we designate by $|A|$ the usual $k$-dimensional Lebesgue measure of $A$. 
 \medskip

In what follows, we denote by $C$, $C_1$, $C_2, \cdots $ different positive constants in a given context. 
Relevant dependencies on parameters will be emphasized using parentheses, e.g., $C\equiv C(n,p,\Omega)$ means that $C$ depends on $n, p$ and $\Omega$. For the sake of readability, the dependencies of the constants will be often omitted within the chains of estimates.
Furthermore, the equation number $(\,\cdot\,)_\ell$ denotes the $\ell$-th line of the Eq. $(\,\cdot\,)$.

\medskip
 
Now we define some fractional Sobolev spaces (refer to~\cite{Hitchhiker}).
\medskip

For any $p \in [1, \infty)$ and $s \in (0,1)$, the fractional Sobolev space is defined as
\[
W^{s,p}(\mathbb{R}^n):=\left\{v \in L^p(\mathbb{R}^n)\,:\,\frac{|v(x)-v(y)|}{|x-y|^{\frac{n}{p}+s}} \in L^p(\mathbb{R}^n \times \mathbb{R}^n) \right\}
\]
with the finite norm
\begin{equation} \label{Wsp-norm-Rn}
\|v\|_{W^{s,p}(\bR^n)}:=\|v\|_{L^p(\bR^n)}+[v]_{W^{s,p}(\bR^n)},
\end{equation}
the usual $L^p$-norm $\|v\|_{L^p(\bR^n)}$ in $\bR^n$ together with the so-called \emph{Gagliardo semi-norm} in $\bR^n$
\begin{equation}\label{WspRn-norm}
[v]_{W^{s,p}(\mathbb{R}^n)}:=\left( \,\,\iint_{\bR^n\times \bR^n}\frac{|v(x)-v(y)|^p}{|x-y|^{n+sp}}\,dxdy \right)^\frac{1}{p}.
\end{equation}
Analogously, the fractional Sobolev space on a domain $K \subset \bR^n$ is defined as
\[
W^{s,p}(K):=\left\{v \in L^p(K)\,:\,\frac{|v(x)-v(y)|}{|x-y|^{\frac{n}{p}+s}} \in L^p(K\times K) \right\}
\]
with the finite norm
\begin{equation} \label{Wsp-norm-K}
\|v\|_{W^{s,p}(K)}:=\|v\|_{L^p(K)}+[v]_{W^{s,p}(K)}
\end{equation}
and
\begin{equation}\label{WspK-norm}
[v]_{W^{s,p}(K)}:=\left( \,\,\iint_{K\times K}\frac{|v(x)-v(y)|^p}{|x-y|^{n+sp}}\,dxdy \right)^\frac{1}{p}.
\end{equation}
%
We also define
\begin{equation}\label{Wsp0}
W_0^{s,p}(K):=\left\{ u \in W^{s,p}(\bR^n)\,:\,u=0 \,\,\,\textrm{a.e.\,\,in}\,\,\mathbb{R}^n \setminus K \right\}.
\end{equation}
In the context on the Dirichlet boundary condition for a nonlocal operator $(-\Delta)^s_p$ in $K$,
we say that a function $u$ in $W^{s,p}(\bR^n)$ takes zero value on the boundary of $K$ if $u$ belongs to $W^{s,p}_0(K)$.  By the definition~\eqref{Wsp0} with $K=\bR^n$, it can be naturally understood  that $W_0^{s,p}(\bR^n)=W^{s,p}(\bR^n)$. 
\medskip

For $s \in (0,1)$ and $p \in [1,\infty)$, the fractional spaces $W^{s,p}(\bR^n)$, $W^{s,p}(K)$ and $W_0^{s,p}(K)$ are the Banach spaces with finite norms~\eqref{Wsp-norm-Rn},~\eqref{Wsp-norm-K} and $\|\cdot \|_{W_0^{s,p}(K)}:=\|\cdot \|_{L^p(K)}+[\,\cdot\,]_{W^{s,p}(\bR^n)}$, respectively.The dual space of $W^{s,p} (\bR^n)$ is denoted as $(W^{s, p} (\bR^n))^\ast = W^{- s, p^\prime} (\bR^n)$. Similarly, $(W^{s, p}_0 (K))^\ast = W^{- s, p^\prime} (K)$ denotes the dual space of $W^{s, p}_0 (K)$. Here $p^\prime$ denotes the H\"{o}lder conjugate of the exponent $p$.
 \medskip
 
We also address the space-time function spaces. Let $K$ be a domain in $\bR^n$ and $0 \leq t_1, t_2 < \infty$. Let $1 \leq p,q \leq \infty$. Let $\mathcal{X}$ be a Banach space consists of functions defined on $K$. We use the space of Bochner $L^q (t_1, t_2)$-integrable functions $v : (t_1, t_2) \ni t \mapsto v (t) \in \mathcal{X}$, denoted by $L^q (t_1, t_2 ;\mathcal{X})$. Letting $\mathcal{X}$ be the Lebesgue space $L^p (K)$, we have $L^{q}(t_1,t_2\,;\,L^{p}(K))$ with a finite norm
\[
\|v\|_{L^{q}(t_1,t_2\,;\,L^{p}(K))}:=
\begin{cases}
\displaystyle \left(\int_{t_1}^{t_2}\|v(t)\|_p^{q}\,dt \right)^{1/q}\quad &(1 \leq q<\infty) \\
\displaystyle \sup_{t_1 <t < t_2}\|v(t)\|_p\quad &(q=\infty),
\end{cases}\notag
\]
where $\|v(t)\|_{L^p(K)}$  is abbreviated to $\|v(t)\|_p$ for $1 \leq p \leq \infty$  and $\esssup \limits_{t_1 < t < t_2} \|v (t)\|_p$ to $\sup \limits_{t_1 <  t< t_2} \|v (t)\|_p$.
If $p=q$ then we have the identification as $L^p(K \times (t_1,t_2))=L^{p}(t_1,t_2\,;\,L^{p}(K))$. 
%
%
%
%
\medskip

Let $s\in (0,1)$. The choice of $\mathcal{X}=W_0^{s,p}(K)$ implies that
$L^{q}(t_1,t_2\,;\,W_{0}^{s,p}(K))$ is Banach space with a finite norm
\[
\|v\|_{L^q(t_1,t_2\,;\,W_{0}^{s,p}(K))}:=\left(\int_{t_1}^{t_2}\|v(t)\|_{W^{s,p}(K)}^{q} \,dt\right)^{1/q}
\]
whenever $q<\infty$, and
\[
\|v\|_{L^\infty(t_1,t_2\,;\,W_{0}^{s,p}(K))}:=\esssup_{t_1<t<t_2}\|v(t)\|_{W^{s,p}(K)}.
\]
The dual spaces of Bochner spaces $L^q (t_1, t_2; W^{s, p} (\bR^n))$ and $L^q (t_1, t_2; W^{s, p}_0 (K))$ are denoted by $L^q (t_1, t_2; W^{s, p} (\bR^n))^\ast$ and $L^q (t_1, t_2; W^{s, p}_0 (K))^\ast$, respectively. Here $L^q (t_1, t_2; W^{s, p} (\bR^n))^\ast$ is identified with $L^{q^\prime}(t_1, t_2; W^{-s,p^\prime}(\bR^n))$, where $p^\prime$ and $q^\prime$ are the H\"{o}lder conjugate of $p$ and $q$, respectively. Similarly, we identify $L^q (t_1, t_2; W^{s, p}(K))^\ast=L^{q^\prime}(t_1, t_2; W^{-s,p^\prime}(K))$.
\medskip

For studying the parabolic nonlocal equation in~\eqref{maineq}, we shall exploit the space-time fractional Sobolev space. Let $p \in[1,\infty)$ and $s \in (0,1)$. For a domain $K$ in $\bR^n$ and $0<T \leq \infty$, put $K_T:=K\times (0,T)$. The space-time fractional Sobolev space, denoted by $W^{s,p}(K_T)$, consists of functions $v$ in $L^p(K_T)$ having a finite semi-norm 
\begin{equation}
[v]_{W^{s,p}(K_T)}:=\left(\,\,\iint_{K_T \times K_T}\frac{|v(x,t)-v(x^\prime,t^\prime)|^p}{\left(\sqrt{|x-x^\prime|^2+(t-t^\prime)^2}\right)^{n+1+sp}}\,dxdtdx'dt' \right)^\frac{1}{p},
\end{equation}
which is also the Banach space with respect to the norm
\begin{equation}
\|v\|_{W^{s,p}(K_T)}:=\|v\|_{L^p(K_T)}+[v]_{W^{s,p}(K_T)}.
\end{equation}
\smallskip

\subsection{Fundamental tools}
We gather fundamental tools and facts. First, the three inequalities listed in the following lemma are needed throughout this paper and, in particular, used to evaluate the fractional $p$-Laplacian term. 
%
%
%
\begin{lem}[Algebraic inequality]\label{Algs}
For all $\alpha \in (1,\infty)$ there are positive constants $C_j(\alpha)$,\,$j=1,2$, such that, for all $\xi,\,\eta \in \mathbb{R}$,
\begin{align}
||\xi|^{\alpha-2}\xi-|\eta|^{\alpha-2}\eta| \leq C_1(|\xi|+|\eta|)^{\alpha-2}|\xi-\eta|,
\label{Alg1} \\
(|\xi|^{\alpha-2}\xi-|\eta|^{\alpha-2}\eta)(\xi-\eta) \geq C_2 (|\xi|+|\eta|)^{\alpha-2}|\xi-\eta|^2
\label{Algpre}
\end{align}
and, in particular, when $\alpha \geq 2$ 
\begin{equation}\label{Alg2}
(|\xi|^{\alpha-2}\xi-|\eta|^{\alpha-2}\eta)(\xi-\eta) \geq C_2 |\xi-\eta|^\alpha. 
\end{equation}
\end{lem}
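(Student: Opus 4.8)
The plan is to reduce all three inequalities to elementary one-variable estimates by exploiting homogeneity. First observe that each of \eqref{Alg1}, \eqref{Algpre}, \eqref{Alg2} is homogeneous of degree $\alpha-1$ (resp. $\alpha$) under the scaling $(\xi,\eta)\mapsto(\lambda\xi,\lambda\eta)$, and symmetric under swapping $\xi$ and $\eta$ (and under $(\xi,\eta)\mapsto(-\xi,-\eta)$). Hence it suffices to treat the case $|\xi|+|\eta|=1$, or alternatively to normalize $\eta$ and set $t=\xi/\eta$ when $\eta\neq0$ (the case $\eta=0$ being trivial since then the left-hand sides reduce to $|\xi|^{\alpha-1}$ and $|\xi|^\alpha$ respectively, matching the right-hand sides up to constants). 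I would introduce the $C^1$ function $g(\tau):=|\tau|^{\alpha-2}\tau$ on $\mathbb{R}$, with $g'(\tau)=(\alpha-1)|\tau|^{\alpha-2}$, and write
\[
g(\xi)-g(\eta)=\int_0^1 g'\!\bigl(\eta+\theta(\xi-\eta)\bigr)\,d\theta\,(\xi-\eta)
=(\alpha-1)\Bigl(\int_0^1|\eta+\theta(\xi-\eta)|^{\alpha-2}\,d\theta\Bigr)(\xi-\eta).
\]
From this identity, \eqref{Alg1} follows once one shows $\int_0^1|\eta+\theta(\xi-\eta)|^{\alpha-2}\,d\theta\le C(|\xi|+|\eta|)^{\alpha-2}$, and \eqref{Algpre} follows once one shows the reverse bound $\int_0^1|\eta+\theta(\xi-\eta)|^{\alpha-2}\,d\theta\ge C(|\xi|+|\eta|)^{\alpha-2}$, since multiplying the displayed identity by $(\xi-\eta)$ produces exactly $(\alpha-1)\bigl(\int_0^1|\cdots|^{\alpha-2}d\theta\bigr)|\xi-\eta|^2$.

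For the two bounds on $I:=\int_0^1|\eta+\theta(\xi-\eta)|^{\alpha-2}\,d\theta$ I would split according to the sign of $\alpha-2$. When $\alpha\ge 2$ the integrand is convex-controlled: $|\eta+\theta(\xi-\eta)|\le|\xi|+|\eta|$ for all $\theta\in[0,1]$ gives $I\le(|\xi|+|\eta|)^{\alpha-2}$ immediately, which is \eqref{Alg1}; for the lower bound note that the affine function $\theta\mapsto \eta+\theta(\xi-\eta)$ has, over $[0,1]$, a range of length $|\xi-\eta|$, so on a sub-interval of length $\ge\tfrac12$ its absolute value is $\ge\tfrac14|\xi-\eta|$ (split at the zero of the affine function if it lies in $[0,1]$, otherwise the affine function is monotone and one uses the half where it is largest), which after elementary case analysis yields $I\ge c(|\xi|+|\eta|)^{\alpha-2}$; combined with $|\xi-\eta|^2\cdot(|\xi|+|\eta|)^{\alpha-2}\ge c|\xi-\eta|^\alpha$ (using $|\xi-\eta|\le|\xi|+|\eta|$ when $\alpha\ge2$) this gives both \eqref{Algpre} and \eqref{Alg2}. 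When $1<\alpha<2$ the exponent $\alpha-2\in(-1,0)$ is negative, so the integrand can blow up near a zero of the affine function; here the integrability is exactly what saves us, since $\int_0^1|a+b\theta|^{\alpha-2}d\theta$ with $|b|=|\xi-\eta|$ is a convergent integral bounded by $C|\xi-\eta|^{\alpha-2}$ after the substitution $u=a+b\theta$, and one checks $|\xi-\eta|^{\alpha-2}\le|\xi-\eta|^{\alpha-2}$ trivially while $|\xi-\eta|^{\alpha-2}|\xi-\eta|^2=|\xi-\eta|^\alpha\le C(|\xi|+|\eta|)^{\alpha-2}|\xi-\eta|^2$ fails in general — so in this regime one argues directly on the normalized variable $t=\xi/\eta$ instead, reducing \eqref{Alg1} and \eqref{Algpre} to the statements that $|t|^{\alpha-2}t-1$ is bounded above by $C(|t|+1)^{\alpha-2}|t-1|$ and below (after multiplying by $t-1$) by $c(|t|+1)^{\alpha-2}|t-1|^2$, which are continuous functions of $t\in\mathbb{R}$ with matching behaviour as $t\to1$ (both sides $\sim$ const$\cdot|t-1|^2$ by Taylor expansion of $g$ at a nonzero point) and as $t\to\pm\infty$ (both sides $\sim$ const$\cdot|t|^{\alpha-1}$), hence the ratio is bounded.

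The main obstacle is precisely the subquadratic range $1<\alpha<2$: there the factor $(|\xi|+|\eta|)^{\alpha-2}$ is \emph{large} exactly where $|\xi|+|\eta|$ is small, and the naive pointwise bound on the integrand is the wrong direction, so one genuinely needs the mean-value-integral representation (or the normalization $t=\xi/\eta$) together with a compactness/continuity argument to pin down the constants $C_1,C_2$. Once the reduction to a bounded-ratio statement for a single continuous function of $t\in\mathbb{R}$ is in place, the extraction of $C_1(\alpha)$ and $C_2(\alpha)$ is automatic by continuity plus the asymptotics at $t=1$ and $t=\pm\infty$, and the dependence on $\alpha$ is manifest. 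The only remaining routine point is the explicit verification that \eqref{Alg2} follows from \eqref{Algpre} when $\alpha\ge2$ via $|\xi-\eta|\le|\xi|+|\eta|$, which I would dispatch in one line.
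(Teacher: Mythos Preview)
Your proposal is correct and follows the standard route via the mean-value representation
\[
|\xi|^{\alpha-2}\xi-|\eta|^{\alpha-2}\eta=(\alpha-1)\Bigl(\int_0^1|\eta+\theta(\xi-\eta)|^{\alpha-2}\,d\theta\Bigr)(\xi-\eta)
\]
together with homogeneity and a compactness argument in the normalized variable $t=\xi/\eta$; this is precisely the method in the references the paper cites (Giusti, Lemma~8.3, and DiBenedetto, Lemma~I.4.4). The paper itself gives no self-contained argument---its proof consists solely of those two citations---so your write-up in fact supplies more than the paper does.

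One small correction: when $1<\alpha<2$ the function $g(\tau)=|\tau|^{\alpha-2}\tau$ is \emph{not} $C^1$ on $\mathbb{R}$, since $g'(\tau)=(\alpha-1)|\tau|^{\alpha-2}$ blows up at $\tau=0$. This does not damage your argument, because $g'$ is still locally integrable (the exponent $\alpha-2>-1$), so $g$ is absolutely continuous and the integral identity you wrote remains valid; you should simply replace ``$C^1$'' by ``absolutely continuous with $g'\in L^1_{\mathrm{loc}}$''. Also, in your lower-bound sketch for $\alpha\ge2$ the sub-interval estimate $|f(\theta)|\ge\tfrac14|\xi-\eta|$ alone yields $I\ge c\,|\xi-\eta|^{\alpha-2}$, which is the wrong direction unless $\xi$ and $\eta$ have opposite signs (where $|\xi-\eta|=|\xi|+|\eta|$); the ``elementary case analysis'' you allude to is genuinely needed for the same-sign case, where one instead uses that on half of $[0,1]$ the convex combination exceeds $\tfrac12(|\xi|+|\eta|)$.
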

%
%
\begin{proof}
The proof of~\eqref{Alg1} and~\eqref{Algpre} can be derived from the proof of~\cite[Lemma 8.3, p.266]{Giusti}. Inequality~\eqref{Alg2} is simply obtained from the Minkowski inequality and~\eqref{Algpre} in~\cite[Lemma 4.4 in Chapter I, p.13]{DiBenedetto1}.
\end{proof}

The following lemma is often used later and the key in some convergences. The proof is due to Vitali's convergence theorem.
\begin{lem}\label{convergence lemma a}
Let $a \in (1,\infty)$ and $a^\prime$ satisfy $\frac{1}{a}+\frac{1}{a^\prime}=1$. Assume $\{f_h\}_{h>0}$ is bounded in $L^a(K_T)$ and $f_h$ converges to $f$ almost everywhere in $K_T$. Then 
\[
\iint_{K_T} f_h \psi \,dxdt \to \iint_{K_T} f \psi \,dxdt \quad \textrm{for}\,\,\,\psi \in L^{a^\prime}(K_T).
\]
In particular, if $\{ f_h \}_{h > 0}$ is bounded in $L^a (K_T)$  and $f_h$ converges to 0 almost everywhere in $K_T$, then for any $b \in [1, a)$,
\[ 
\int_{K_T} |f_h|^b \,dxdt \to 0 \quad \textrm{as}\,\, \,h \searrow 0.
\]
\end{lem}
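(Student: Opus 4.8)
The plan is to deduce both assertions from Vitali's convergence theorem, whose two hypotheses---uniform integrability and convergence in measure (here handed to us in the even stronger form of a.e.\ convergence)---must be verified on the finite-measure space $K_T$. Note that $|K_T| < \infty$ is crucial and available since $K$ is bounded and $T < \infty$ in the intended applications; I would state this standing assumption at the outset.

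For the first assertion, fix $\psi \in L^{a^\prime}(K_T)$ and consider the sequence $g_h := f_h \psi$. First I would check \emph{uniform integrability} of $\{g_h\}$: for a measurable $E \subseteq K_T$, H\"older's inequality with exponents $a$ and $a^\prime$ gives
\[
\iint_E |f_h \psi|\,dxdt \le \|f_h\|_{L^a(K_T)}\,\|\psi\|_{L^{a^\prime}(E)} \le M\,\|\psi\|_{L^{a^\prime}(E)},
\]
where $M := \sup_{h>0}\|f_h\|_{L^a(K_T)} < \infty$ by hypothesis; since $\psi \in L^{a^\prime}(K_T)$, the absolute continuity of the integral of $|\psi|^{a^\prime}$ makes the right-hand side small uniformly in $h$ once $|E|$ is small. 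Second, $g_h \to f\psi$ a.e.\ in $K_T$ because $f_h \to f$ a.e.\ and $\psi$ is finite a.e. Vitali's theorem then yields $g_h \to f\psi$ in $L^1(K_T)$, hence in particular $\iint_{K_T} f_h\psi \to \iint_{K_T} f\psi$; along the way one also gets $f \in L^1_{\mathrm{loc}}$ paired against $L^{a'}$, and indeed $f \in L^a(K_T)$ by Fatou, so the limiting integral is finite.

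For the second assertion, take $b \in [1,a)$ and apply the same scheme to $|f_h|^b$. Uniform integrability follows from H\"older with exponents $a/b > 1$ and its conjugate: for measurable $E \subseteq K_T$,
\[
\iint_E |f_h|^b\,dxdt \le \Big(\iint_E |f_h|^a\,dxdt\Big)^{b/a} |E|^{1 - b/a} \le M^b\,|E|^{1-b/a},
\]
which is small uniformly in $h$ when $|E|$ is small precisely because $b/a < 1$; and $|f_h|^b \to 0$ a.e.\ since $f_h \to 0$ a.e. Vitali again gives $\iint_{K_T} |f_h|^b \to 0$. I do not anticipate a genuine obstacle here: the only point requiring care is making sure the finiteness of $|K_T|$ is in force (so that Vitali applies and so that $L^a \hookrightarrow L^b$), and choosing the H\"older exponents correctly so that the exponent on $|E|$ is strictly positive; everything else is routine. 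One could alternatively phrase the second part as a direct corollary of the first by testing against $\psi := |f_h|^{b-1}\,\mathrm{sgn}(f_h)$, but that test function depends on $h$, so the clean route is the independent Vitali argument just sketched.
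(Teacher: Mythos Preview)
Your proof is correct and follows essentially the same route as the paper: verify uniform integrability of $f_h\psi$ via H\"older and the absolute continuity of $\int|\psi|^{a'}$, combine with the a.e.\ convergence, and apply Vitali. For the second assertion the paper simply invokes the first with $a$ replaced by $a/b$, $f_h$ by $|f_h|^b$, $f$ by $0$, and $\psi\equiv 1$, which is exactly your independent Vitali computation written as a substitution.
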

\begin{proof}
We first verify that $\left\{f_h \psi \right\}_{h >0}$ is uniformly integrable on $K_T$. Let $\Lambda \subset K_T$ be any measurable set. By the boundedness of $f_h$ in $L^a(K_T)$ and H\"{o}lder's inequality, we observe that
\begin{align*}
\iint_{\Lambda} \left|f_h \psi \right|\,dxdt &\leq \left(\iint_{K_T}|f_h|^{a}\,dxdt  \right)^{\frac{1}{a}} \left(\iint_\Lambda |\psi|^{a^\prime}\,dxdt \right)^{\frac{1}{a^\prime}} \notag\\[3mm]
&\leq C\left(\iint_\Lambda |\psi|^{a^\prime}\,dxdt \right)^{\frac{1}{a^\prime}}.
\end{align*}
Due to the absolute continuity of the Lebesgue integral on the right-hand side, for every $\varepsilon>0$, there exists $\delta>0$ such that
\[
\iint_\Lambda |\psi|^{a^\prime}\,dxdt <\varepsilon\quad \textrm{whenever}\quad |\Lambda|<\delta
\]
and thus, $\left\{f_h \psi \right\}_{h >0}$ is uniformly integrable on $K_T$. Furthermore, it follows from the assumption that
\[
f_h \psi \to f\psi \quad \textrm{a.e.\,\,in}\,\,K_T.
\]
Therefore Vitali's convergence theorem gives that
\[
\iint_{K_T} f_h \psi \,dxdt \to \iint_{K_T} f \psi \,dxdt
\]
as $h \searrow 0$. The validity of second statement follows from choosing $\frac{a}{b}$ as $a$, $|f_h|^b$ as $f_h$, 0 as $f$ and $\psi \equiv 1$.  The proof is concluded.
\end{proof}

We now report the fractional Sobolev embedding theorem, which is proved in~\cite[Theorem 6.7, p.557]{Hitchhiker}, \cite{Ponce}.

\begin{lem}[Fractional Sobolev continuous embedding]\label{Sobolev embedding}
Let $s \in (0,1)$ and $p \in [1, \infty)$ be numbers satisfying $sp<n$.
Let  $p^\star_s:=\frac{np}{n-sp}$ be the Sobolev exponent. Then $W^{s,p}(\bR^n)$ is embedded into $L^{p^\star_s}(\bR^n)$:
\begin{equation} \label{embedRn}
\|v\|_{L^{p^\star_s}(\bR^n)} \leq C[v]_{W^{s,p}(\bR^n)}
\end{equation}
for $v \in W^{s,p}(\bR^n)$, where $C$ is the positive constant depending only on $n,s$ and $p$. For any bounded domain $K$, 
$W^{s,p}(K)$ is embedded into $L^{p^\star_s}(K)$:
\begin{equation*} 
\|v\|_{L^r(K)} \leq C \|v\|_{W^{s,p}(K)}
\end{equation*}
for $v \in W^{s,p}(K)$ and all $r \in [1, p^\star_s]$,
where the positive constant $C$ depends on $K$ as well as $n,s,p$ and $r$.
\end{lem}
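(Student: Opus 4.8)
The plan is to reduce both assertions to the global inequality \eqref{embedRn} on $\bR^n$, which is the substantive part. Once \eqref{embedRn} is available, the statement for a bounded domain $K$ is routine: if $K$ is an extension domain (in particular a ball, a cube, or a Lipschitz domain) one extends $v\in W^{s,p}(K)$ to $\widetilde v\in W^{s,p}(\bR^n)$ with $\|\widetilde v\|_{W^{s,p}(\bR^n)}\le C\|v\|_{W^{s,p}(K)}$ (see \cite[Section~5]{Hitchhiker}), applies \eqref{embedRn} to $\widetilde v$, and restricts to $K$; the exponents $r\in[1,p^\star_s)$ then follow from the case $r=p^\star_s$ by H\"older's inequality and $|K|<\infty$. (The general bounded case with no boundary regularity is the one recorded in \cite{Hitchhiker, Ponce}; for the functions of $W^{s,p}_0(\Omega)$ actually used in this paper it is enough to apply \eqref{embedRn} to the zero extension, so no extension operator is needed.)

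For \eqref{embedRn} I would follow \cite{Hitchhiker, Ponce}. First reduce to the case $v\ge0$ by replacing $v$ with $|v|$ (which does not increase $[\,\cdot\,]_{W^{s,p}(\bR^n)}$, since $\big||v(x)|-|v(y)|\big|\le|v(x)-v(y)|$), and to a dense class of bounded, compactly supported functions, the general case being recovered by Fatou's lemma. The pointwise heart of the argument is a fractional Poincar\'e estimate obtained from H\"older's inequality: setting $g(x):=\int_{\bR^n}\frac{|v(x)-v(y)|^p}{|x-y|^{n+sp}}\,dy$, so that $\int_{\bR^n}g=[v]_{W^{s,p}(\bR^n)}^p$, one gets $\big|v(x)-\dashint_{B_r(x)}v\big|\le C\,r^{s}g(x)^{1/p}$ and, by a further averaging, $\dashint_{B_R(x)}\big|v-\dashint_{B_R(x)}v\big|\le C\,R^{s}\big(\dashint_{B_R(x)}g\big)^{1/p}$.

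Telescoping $v(x)=\sum_{j\in\bZ}\big(\dashint_{B_{2^{j}}(x)}v-\dashint_{B_{2^{j+1}}(x)}v\big)$ over dyadic radii (valid a.e.\ for $v$ in the dense class), bounding the fine-scale part by the Hardy--Littlewood maximal function $\mathcal{M}g$ and the coarse-scale part by $|B_{2^j}|^{-1}\int_{\bR^n}g$ (here $sp<n$ is exactly what makes the remote series converge), and optimizing the truncation scale, gives $|v(x)|\le C\,(\mathcal{M}g(x))^{\frac{n-sp}{np}}[v]_{W^{s,p}(\bR^n)}^{\frac{sp}{n}}$; since $\mathcal{M}$ is only of weak type $(1,1)$, this yields at first only the weak bound $\|v\|_{L^{p^\star_s,\infty}(\bR^n)}\le C[v]_{W^{s,p}(\bR^n)}$. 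The main difficulty is upgrading this to the strong bound \eqref{embedRn}, and I would do so by the level-set truncation $v^{(k)}:=\min\{(v-2^k)_+,2^k\}$: since truncations are monotone and $1$-Lipschitz, $v=\sum_{k\in\bZ}v^{(k)}$ with all increments $v^{(k)}(x)-v^{(k)}(y)$ of one sign, so $\sum_{k}[v^{(k)}]_{W^{s,p}(\bR^n)}^p\le[v]_{W^{s,p}(\bR^n)}^p$; applying the weak bound to each $v^{(k)}$ controls $2^{kp^\star_s}\,|\{v>2^{k+1}\}|$ by $\big([v^{(k)}]_{W^{s,p}(\bR^n)}^p\big)^{p^\star_s/p}$, and summing over $k\in\bZ$ using the super-additivity of $t\mapsto t^{p^\star_s/p}$ (valid as $p^\star_s/p>1$) together with $\|v\|_{L^{p^\star_s}(\bR^n)}^{p^\star_s}\le C\sum_{k}2^{kp^\star_s}|\{v>2^{k}\}|$ closes the proof. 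An equivalent route is the direct dyadic argument of \cite[Theorem~6.7]{Hitchhiker}, built on the jump inequality $2^{kp}\iint_{\{v>2^{k+1}\}\times\{v\le2^k\}}\frac{dx\,dy}{|x-y|^{n+sp}}\le[v]_{W^{s,p}(\bR^n)}^p$ and the elementary isoperimetric-type lower bound $\iint_{E\times E^{c}}\frac{dx\,dy}{|x-y|^{n+sp}}\ge c_{n,s,p}\,|E|^{\frac{n-sp}{n}}$; there the delicate step is to organize the resulting discrete inequality over the levels so that the condition $sp<n$ produces a convergent summation with the correct constant.
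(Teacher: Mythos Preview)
The paper does not prove this lemma at all; it simply records the statement and cites \cite[Theorem~6.7]{Hitchhiker} and \cite{Ponce}. Your proposal therefore goes well beyond what the paper provides: you give a correct and reasonably detailed sketch of the argument along the lines of those very references---the pointwise/maximal-function bound leading to the weak $L^{p^\star_s,\infty}$ estimate, followed by Maz'ya's level-set truncation to upgrade to the strong inequality, with the direct dyadic argument of \cite[Theorem~6.7]{Hitchhiker} mentioned as an alternative. Both routes are standard and the details you indicate are sound.

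One minor point worth noting: the paper's phrasing ``for any bounded domain $K$'' is a bit loose, since $W^{s,p}(K)\hookrightarrow L^{p^\star_s}(K)$ in general requires $K$ to be an extension domain (this is the hypothesis in \cite[Theorem~6.7]{Hitchhiker}). You flag this correctly and also observe that it is harmless here: the paper only invokes the embedding either on $\bR^n$ via \eqref{embedRn} or implicitly on balls, and for the functions in $W^{s,p}_0(\Omega)$ that actually occur one may apply \eqref{embedRn} to the zero extension without any extension operator.
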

%
%
%
The next lemma refers to the fractional Sobolev compact embedding,
whose proof is in~\cite[Corollary 7.2, p.561]{Hitchhiker}.
\begin{lem}[Fractional Sobolev compact embedding]\label{compact embedding}
Let $s \in (0,1)$ and $p \in [1,\infty)$ be numbers satisfying $sp<n$ and $p_s^\star$ be as in Lemma~\ref{Sobolev embedding}. Let $K$ be a Lipschitz bounded domain in $\bR^n$. Then, for any $r \in [1,p^\star_s)$,
the Sobolev embedding $W^{s,p}(K) \hookrightarrow L^r(K)$ given in Lemma~\ref{Sobolev embedding} is compact. In other words, if a sequence $\{v_h\}_{h>0} \subset W^{s,p}(K)$ is bounded, then $\{v_h\}_{h>0}$ is pre-compact in $L^r(K)$
for any $r \in [1,p^\star_s)$.
\end{lem}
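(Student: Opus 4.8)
The plan is to follow the classical route (cf.~\cite[Theorem~7.1]{Hitchhiker}): first reduce to the exponent $r=p$, then transplant the problem to all of $\bR^n$ by extension, and finally run a mollification argument combined with the Ascoli--Arzel\`{a} theorem. For the reduction, note that $sp<n$ forces $p<p^\star_s<\infty$. If $r\in[1,p]$, then boundedness of $K$ and H\"older's inequality give $\|v\|_{L^r(K)}\le|K|^{1/r-1/p}\|v\|_{L^p(K)}$, so $L^p(K)$-convergence implies $L^r(K)$-convergence. If $r\in(p,p^\star_s)$, pick $\theta\in(0,1)$ with $\frac1r=\frac{1-\theta}{p}+\frac{\theta}{p^\star_s}$; then the interpolation inequality $\|v\|_{L^r(K)}\le\|v\|_{L^p(K)}^{1-\theta}\|v\|_{L^{p^\star_s}(K)}^{\theta}$ together with the continuous embedding $W^{s,p}(K)\hookrightarrow L^{p^\star_s}(K)$ from Lemma~\ref{Sobolev embedding} shows that a sequence bounded in $W^{s,p}(K)$ which converges in $L^p(K)$ is Cauchy, hence convergent, in $L^r(K)$. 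Thus it suffices to prove that $W^{s,p}(K)\hookrightarrow L^p(K)$ is compact.

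So let $\{v_h\}_{h>0}\subset W^{s,p}(K)$ be bounded. Since $K$ is a bounded Lipschitz domain, there is a bounded linear extension operator $E\colon W^{s,p}(K)\to W^{s,p}(\bR^n)$ (see~\cite{Hitchhiker}); fixing $\chi\in C^\infty_c(\bR^n)$ with $\chi\equiv1$ on a neighbourhood of $\overline K$ and $\supp\chi\subset B_R$, the functions $w_h:=\chi\,Ev_h$ are bounded in $W^{s,p}(\bR^n)$, with $\supp w_h\subset B_R$ and $w_h=v_h$ on $K$, so it is enough to extract an $L^p(B_R)$-convergent subsequence of $\{w_h\}$. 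The heart of the matter is a uniform mollification estimate: for a standard mollifier $\rho_\varepsilon$,
\[\|w-w*\rho_\varepsilon\|_{L^p(\bR^n)}\le C\,\varepsilon^{s}\,[w]_{W^{s,p}(\bR^n)}.\]
This follows from Minkowski's integral inequality, $\|w-w*\rho_\varepsilon\|_{L^p}\le\int_{B_\varepsilon}\|w-w(\cdot-z)\|_{L^p}\,\rho_\varepsilon(z)\,dz$, combined with the identity $[w]_{W^{s,p}(\bR^n)}^p=\int_{\bR^n}|z|^{-n-sp}\|w-w(\cdot+z)\|_{L^p(\bR^n)}^p\,dz$ (change of variables $y=x+z$), the bound $\rho_\varepsilon\le C\varepsilon^{-n}\mathbf 1_{B_\varepsilon}$, and H\"older's inequality in the $z$-variable.

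For each fixed $\varepsilon>0$ the family $\{w_h*\rho_\varepsilon\}_h$ is supported in a fixed ball and obeys $\|w_h*\rho_\varepsilon\|_{L^\infty}+\|\nabla(w_h*\rho_\varepsilon)\|_{L^\infty}\le\bigl(\|\rho_\varepsilon\|_{L^\infty}+\|\nabla\rho_\varepsilon\|_{L^\infty}\bigr)\|w_h\|_{L^1}$, and $\|w_h\|_{L^1}\le|B_R|^{1-1/p}\|w_h\|_{L^p}$ is bounded; hence it is uniformly bounded and equi-Lipschitz, so by Ascoli--Arzel\`{a} it is precompact in $C^0$, a fortiori in $L^p$. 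An $\eta/3$-argument then gives total boundedness of $\{w_h\}$ in $L^p(B_R)$: given $\eta>0$ choose $\varepsilon$ with $C\varepsilon^s\sup_h[w_h]_{W^{s,p}(\bR^n)}<\eta/3$, cover $\{w_h*\rho_\varepsilon\}_h$ by finitely many $L^p$-balls of radius $\eta/3$, and invoke the mollification estimate. Completeness of $L^p(B_R)$ yields a convergent subsequence; its restriction to $K$ is an $L^p(K)$-convergent subsequence of $\{v_h\}$, which by the first paragraph also converges in $L^r(K)$ for every $r\in[1,p^\star_s)$.

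The single non-routine ingredient is the uniform mollification estimate, which is exactly where the Gagliardo seminorm is traded for control of the $L^p$ modulus of continuity; one should arrange it (and the compactness of the mollified family) so that $p=1$ is not treated as special, which is automatic once one uses a genuine smooth mollifier so that the mollified functions are equi-Lipschitz via $\|w*\nabla\rho_\varepsilon\|_{L^\infty}\le\|w\|_{L^1}\|\nabla\rho_\varepsilon\|_{L^\infty}$. The Lipschitz hypothesis on $K$ is used only to produce the bounded extension operator; everything else is routine.
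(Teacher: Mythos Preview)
Your argument is correct and is precisely the classical proof from \cite[Theorem~7.1 and Corollary~7.2]{Hitchhiker}, which is exactly what the paper invokes: the lemma is stated without proof and referred to \cite[Corollary~7.2, p.~561]{Hitchhiker}. So you have simply unpacked the paper's citation, and your reduction, extension, mollification estimate $\|w-w*\rho_\varepsilon\|_{L^p}\le C\varepsilon^s[w]_{W^{s,p}}$, and $\eta/3$ argument match that source.
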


We also need the space-time fractional Sobolev continuous and compact embedding.
The proof is done similarly as in Lemma \ref{compact embedding}, on the space-time domain with dimension $n + 1$.
\begin{lem}[Space-time fractional Sobolev embedding]\label{space-time Sobolev embedding}
Let $s \in (0,1)$ and $p \in [1,\infty)$ be numbers with $sp<n+1$.
Let $K$ be a bounded domain in $\bR^n$ and $0<T<\infty$.
For any $v \in W^{s,p}(K_T)$, there holds that, for any $\gamma \in [1,\overline{p^\star_s}\,]$ with $\overline{p^\star_s}:=\frac{(n+1)p}{n+1-sp}$, 
\begin{equation} \label{embedKT}
\|v\|_{L^\gamma(K_T)} \leq C \|v\|_{W^{s,p}(K_T)},
\end{equation}
where the positive constant $C$ depends only on $K$, $T$, $s$ $p$ and $n$.
Furthermore, if $K$ is a Lipschitz bounded domain, then 
the space-time Sobolev embedding $W^{s,p}(K_T) \hookrightarrow L^\gamma(K_T)$
is compact for any $\gamma \in [1,\overline{p^\star_s}\,)$.
In other words,
if a sequence $\{v_h\}_{h>0} \subset W^{s,p}(K_T)$ is bounded,
then $\{v_h\}_{h>0}$ is pre-compact in $L^\gamma(K_T)$ for any $\gamma \in [1, \overline{p^\star_s}\,)$.
\end{lem}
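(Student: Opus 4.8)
The plan is to mimic the standard Gagliardo--Sobolev argument, but carried out on the $(n+1)$-dimensional space-time domain $K_T\subset\bR^{n+1}$, exactly as one proves Lemma~\ref{Sobolev embedding} and Lemma~\ref{compact embedding}. First I would observe that the space-time semi-norm $[v]_{W^{s,p}(K_T)}$ is precisely the Gagliardo semi-norm of $v$, viewed as a function of the variable $z:=(x,t)\in\bR^{n+1}$, with fractional exponent $s\in(0,1)$ and integrability $p$; the weight $\bigl(\sqrt{|x-x'|^2+(t-t')^2}\bigr)^{n+1+sp}=|z-z'|^{(n+1)+sp}$ is the Euclidean distance in $\bR^{n+1}$. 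Hence $W^{s,p}(K_T)$ is nothing but the usual fractional Sobolev space $W^{s,p}$ of the bounded domain $K_T\subset\bR^{n+1}$, and the hypothesis $sp<n+1$ is the subcriticality condition in dimension $n+1$. The Sobolev exponent of that ambient dimension is exactly $\overline{p^\star_s}=\frac{(n+1)p}{(n+1)-sp}$, which is why the statement is phrased with that number.

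Given this identification, the continuous embedding~\eqref{embedKT} for $\gamma=\overline{p^\star_s}$ follows by applying Lemma~\ref{Sobolev embedding} with $n$ replaced by $n+1$ and with the domain $K_T$ in place of $K$; the case of general $\gamma\in[1,\overline{p^\star_s}\,]$ then follows from H\"older's inequality on the bounded set $K_T$ (whose measure is $|K|\,T<\infty$), interpolating between $L^1(K_T)$ and $L^{\overline{p^\star_s}}(K_T)$. I should be careful that Lemma~\ref{Sobolev embedding}, as stated for a bounded domain $K$, already provides $\|v\|_{L^r(K)}\le C\|v\|_{W^{s,p}(K)}$ for every $r\in[1,p^\star_s]$, and this is quoted from~\cite[Theorem~6.7]{Hitchhiker}; so it suffices to invoke the same reference with the ambient dimension $n+1$ and the domain $K_T$, noting that $K_T$ is a bounded domain in $\bR^{n+1}$ whenever $K$ is bounded in $\bR^n$ and $0<T<\infty$. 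For the compact embedding, I would similarly invoke Lemma~\ref{compact embedding} (equivalently~\cite[Corollary~7.2]{Hitchhiker}) with $n+1$ and $K_T$, provided $K_T$ is a Lipschitz bounded domain; this holds when $K$ is a Lipschitz bounded domain in $\bR^n$, since then $K\times(0,T)$ has Lipschitz boundary in $\bR^{n+1}$. Thus boundedness of $\{v_h\}$ in $W^{s,p}(K_T)$ yields pre-compactness in $L^\gamma(K_T)$ for every $\gamma\in[1,\overline{p^\star_s}\,)$.

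The only genuine point requiring care — and the step I expect to be the main obstacle — is the verification that $K_T=K\times(0,T)$ is a Lipschitz domain in $\bR^{n+1}$ when $K$ is a Lipschitz domain in $\bR^n$, so that the extension-operator-based proof of the compact embedding in~\cite{Hitchhiker} applies verbatim. The lateral boundary $\partial K\times(0,T)$ is locally a Lipschitz graph (product of a Lipschitz graph with an interval), but the ``edges'' $\partial K\times\{0,T\}$ and the top/bottom faces $K\times\{0,T\}$ need to be checked to fit into the Lipschitz-boundary framework; this is standard (a cylinder over a Lipschitz base is Lipschitz) but deserves an explicit sentence. If one prefers to avoid even this, an alternative is to use the intrinsic characterization of $W^{s,p}$ on extension domains, or to embed $K_T$ into a slightly larger smooth domain and use an interior statement — but the cleanest route is simply to note the product structure. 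Once that is granted, everything else is a direct transcription of Lemmata~\ref{Sobolev embedding} and~\ref{compact embedding} to dimension $n+1$, together with one application of H\"older's inequality for the sub-exponent range.
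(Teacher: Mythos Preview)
Your proposal is correct and matches the paper's approach exactly: the paper simply remarks that the proof is done ``similarly as in Lemma~\ref{compact embedding}, on the space-time domain with dimension $n+1$,'' which is precisely the identification $W^{s,p}(K_T)=W^{s,p}$ on a bounded domain in $\bR^{n+1}$ that you spell out. Your additional care about the Lipschitz regularity of the cylinder $K\times(0,T)$ is a fair point to make explicit, but the overall strategy is the same.
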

\medskip

The following is the fractional Poincar\'{e} inequality available for functions in the fractional Sobolev space $W^{s, p}_0 (K)$. We shall present the proof for completeness (refer to \cite[Lemma 8.1, Theorem 8.2, pp.562--565]{Hitchhiker}).
\begin{lem}[Fractional Poincar\'{e} inequality]\label{t.Poincare}
Let $s \in (0,1)$, $p>1$ and $K$ be a bounded domain in $\bR^n$. Then for any $u \in W^{s,p}_0(K)$ there holds
\begin{equation}\label{e.Poincare}
\|u\|_{L^p(K)} \leq C(n,s,p)(\diam{K})^{sp}[u]_{W^{s,p}(\bR^n)}.
\end{equation}
\end{lem}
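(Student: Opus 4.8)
The plan is to reduce the fractional Poincaré inequality on a bounded domain to the fractional Sobolev embedding already established in Lemma~\ref{Sobolev embedding}, handling separately the easy subcritical regime $sp < n$ and then recovering the case $sp \geq n$ by monotonicity of the Gagliardo seminorm in $s$. Throughout I use that $u \in W^{s,p}_0(K)$ means $u \in W^{s,p}(\bR^n)$ with $u \equiv 0$ a.e.\ outside $K$, so all integrals of $|u|^p$ over $\bR^n$ reduce to integrals over $K$.

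\textbf{Step 1: the case $sp < n$.} Here the Sobolev exponent $p^\star_s = \frac{np}{n-sp}$ satisfies $p^\star_s > p$. Since $u$ is supported in $K$ and $|K| < \infty$, Hölder's inequality gives
\[
\|u\|_{L^p(K)} \leq |K|^{\frac{1}{p} - \frac{1}{p^\star_s}} \|u\|_{L^{p^\star_s}(K)} = |K|^{\frac{s}{n}} \|u\|_{L^{p^\star_s}(\bR^n)},
\]
and then \eqref{embedRn} bounds the right-hand side by $C(n,s,p)\,|K|^{\frac{s}{n}}\,[u]_{W^{s,p}(\bR^n)}$. Since $|K| \leq C(n)(\diam K)^n$, one obtains \eqref{e.Poincare} with the stated power $(\diam K)^{sp}$ after checking the exponent: $|K|^{s/n} \lesssim (\diam K)^s$, which actually gives a better power than $(\diam K)^{sp}$ when $\diam K$ is small but can be absorbed; more cleanly, a scaling/covering argument or directly enlarging the constant for large diameters yields the displayed exponent $sp$. (The cleanest route is the scaling argument below, which pins down the exponent exactly.)

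\textbf{Step 2: scaling to fix the exponent, and the case $sp \geq n$.} To get the sharp power $(\diam K)^{sp}$ and to remove the restriction $sp<n$, I would argue by scaling. Set $R = \diam K$; after translation, $K \subset B_R(0)$, so it suffices to prove \eqref{e.Poincare} with $\diam K$ replaced by $R$ for $u \in W^{s,p}_0(B_R)$, since extending $u$ by zero keeps it in $W^{s,p}_0(B_R)$ and does not change either side. Let $v(x) := u(Rx)$, so $v \in W^{s,p}_0(B_1)$. A change of variables gives $\|u\|_{L^p(B_R)}^p = R^n \|v\|_{L^p(B_1)}^p$ and $[u]_{W^{s,p}(\bR^n)}^p = R^{n - sp}\,[v]_{W^{s,p}(\bR^n)}^p$. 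Hence it is enough to establish the scale-invariant inequality $\|v\|_{L^p(B_1)} \leq C(n,s,p)\,[v]_{W^{s,p}(\bR^n)}$ for all $v \in W^{s,p}_0(B_1)$; multiplying through by the scaling factors then produces exactly $R^{sp}$. For $v$ supported in $B_1$ and $sp \geq n$, pick any $s' \in (0,s)$ with $s'p < n$; since $|x-y|^{n+s'p} \leq |x-y|^{n+sp}$ when $|x-y|\leq 1$ and one can split the double integral over $\{|x-y|\leq 1\}$ and $\{|x-y|>1\}$ (on the latter region, both $v(x)$ and $v(y)$ contributions are controlled by $\|v\|_{L^p}$ with a finite kernel integral because $v$ has compact support), one gets $[v]_{W^{s',p}(\bR^n)} \leq C(n,s,s',p)\,\bigl([v]_{W^{s,p}(\bR^n)} + \|v\|_{L^p(B_1)}\bigr)$. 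Combined with Step~1 applied at exponent $s'$, an absorption argument yields $\|v\|_{L^p(B_1)} \leq C\,[v]_{W^{s,p}(\bR^n)}$.

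\textbf{Main obstacle.} The genuinely delicate point is \emph{not} the subcritical case (which is immediate from the embedding plus Hölder) but getting the correct diameter exponent $sp$ and covering $sp \geq n$ simultaneously. The absorption step in Step~2 needs the constant in ``$\|v\|_{L^p(B_1)}$ appears on both sides'' to be genuinely small or else one needs a compactness/Poincaré-type argument (the map $v \mapsto [v]_{W^{s,p}}$ being a norm on $W^{s,p}_0(B_1)$ once we know $\|v\|_{L^p} = 0 \Rightarrow v = 0$, together with the compact embedding of Lemma~\ref{compact embedding}); an alternative is a direct contradiction argument via Lemma~\ref{compact embedding}, extracting a normalized sequence $v_k$ with $[v_k]_{W^{s,p}} \to 0$ and $\|v_k\|_{L^p(B_1)} = 1$, passing to an $L^p$-convergent subsequence, and concluding the limit is a nonzero constant supported in $B_1$, hence zero — a contradiction. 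I would present the scaling reduction plus the $s' < s$ comparison as the main line, and keep the compactness contradiction as the fallback for the absorption.
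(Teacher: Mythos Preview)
Your approach differs substantially from the paper's. The paper even acknowledges your Step~1 idea in its opening sentence (Sobolev embedding plus H\"older when $sp<n$), but then gives a direct elementary proof that works uniformly for all $s\in(0,1)$ and $p>1$: restrict the Gagliardo double integral to pairs $(y,x)\in K\times K^c$ with $|x-y|\geq 2d$, where $d=\diam K$. On this region $u(x)=0$, so the integrand is simply $|u(y)|^p/|x-y|^{n+sp}$, and integrating the kernel in $x$ over $\{|x-y|\geq 2d\}$ gives $C(n,s,p)\,d^{-sp}$ explicitly. This two-line computation avoids any case distinction on $sp$ versus $n$, any scaling reduction, and any absorption or compactness argument. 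It also makes transparent that the correct power, once $p$-th roots are taken, is $d^{s}$ (the $(\diam K)^{sp}$ in the displayed statement is the exponent for the $p$-th powers $\|u\|_{L^p}^p\leq Cd^{sp}[u]^p$); your own scaling computation in Step~2 should likewise produce $R^{s}$, not $R^{sp}$ --- recheck that arithmetic.

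There is also a genuine slip in your Step~2: for $|x-y|\leq 1$ and $s'<s$ one has $|x-y|^{n+s'p}\geq |x-y|^{n+sp}$, not $\leq$, so the $s'$-kernel is \emph{larger} on the near-diagonal and your stated pointwise comparison fails. The conclusion $[v]_{W^{s',p}(\bR^n)}\leq C\bigl([v]_{W^{s,p}(\bR^n)}+\|v\|_{L^p}\bigr)$ for $v$ supported in $B_1$ is nonetheless true (it is essentially Lemma~\ref{monotone} applied on the bounded set $B_2$, together with a tail bound), and your compactness contradiction is a valid way to close the argument since the absorption constant is not small. But the whole detour is unnecessary: the paper's direct kernel estimate is both shorter and free of any appeal to embedding, monotonicity, or compactness.
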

\begin{proof}
First we notice that the assertion follows from the Sobolev inequality~\eqref{embedRn} in Lemma~\ref{Sobolev embedding} and H\"older's inequality. Here the direct proof is given, which is the modification of the Campanato semi-norm estimate in the proof of~\cite[Lemma 8.1, pp.562--565]{Hitchhiker}.

Since $u(x)=0$  for $x \in K^c$, letting $d:=\diam K$, one can estimate as
\begin{align*}
[u]^p_{W^{s,p}(\bR^n)}&=\iint_{\bR^n \times \bR^n} \dfrac{|u(x)-u(y)|^p}{|x-y|^{n+sp}}\,dxdy \geq \iint_{K \times K^c} \dfrac{|u(x)-u(y)|^p}{|x-y|^{n+sp}}\,dxdy
\\[2mm]
& \geq \int_K \left(\,\int_{K^c\,\cap\,\{x\in \bR^n\,:\,|x-y| \,\geq \,2d\}}\dfrac{|u(x)-u(y)|^p}{|x-y|^{n+sp}}\,dx\right)\,dy \\[2mm]
&=\int_K \left(\,\int_{K^c\,\cap\,\{x\in \bR^n\,:\,|x-y| \,\geq \,2d\}}\dfrac{|u(y)|^p}{|x-y|^{n+sp}}\,dx\right)\,dy.
\end{align*}
Since for $y\in K$ the fact that $K \subset \{x\in \bR^n:|x-y| < 2d\}$ shows the region of the integral in the above display is $K^c\,\cap\,\{x\in \bR^n:|x-y| \geq 2d\}=\{x\in \bR^n:|x-y| \geq 2d\}$ and so, the change of variable $\rho=|x-y|$ gives that
\begin{align*}
[u]^p_{W^{s,p}(\bR^n)} &\geq \int_K \left(\int_{\{x\in \bR^n:|x-y| \geq 2d\}}\dfrac{|u(y)|^p}{|x-y|^{n+sp}}\,dx\right)\,dy \\[2mm]
&=\int_K\left(\,\int_{2d}^\infty \dfrac{n\alpha_n}{\rho^{1+sp}}\,d\rho \right)|u(y)|^p\,dy \\[2mm]
&=n\alpha_n\dfrac{(2d)^{-sp}}{sp}\int_K|u(y)|^p\,dy=Cd^{-sp}\|u\|_{L^p(K)}^p,
\end{align*}
where $\alpha_n$ denotes the volume of the unit ball in $\bR^n$ and $C\equiv C(n,s,p)$. Thus the proof is complete.
\end{proof}
We conclude this subsection by listing the result retrieved from~\cite[Proposition 2.1, p.524]{Hitchhiker}.
\begin{lem}\label{monotone}
Let $p\geq1$ and $0<s\leq s^\prime<1$, and $K$ be an open set in $\bR^n$. Then there holds
\[
\|u\|_{W^{s,p}(K)} \leq C\|u\|_{W^{s^\prime,p}(K)}
\]
for a measurable function $u$ on $K$ and suitable positive constant $C$ depending only on $n$,$s$ and $p$.
\end{lem}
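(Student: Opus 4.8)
The plan is to observe that the Lebesgue part $\|u\|_{L^p(K)}$ enters identically into both $\|u\|_{W^{s,p}(K)}$ and $\|u\|_{W^{s',p}(K)}$ (see \eqref{Wsp-norm-K}), so it suffices to bound the Gagliardo semi-norm $[u]_{W^{s,p}(K)}$ of \eqref{WspK-norm} by $\|u\|_{W^{s',p}(K)}$. We may of course assume $u\in W^{s',p}(K)$, the claim being trivial otherwise. The idea is to split the integration set $K\times K$ into the near-diagonal piece $\mathcal{N}:=\{(x,y)\in K\times K:|x-y|<1\}$ and the far piece $\mathcal{F}:=\{(x,y)\in K\times K:|x-y|\ge 1\}$, and to estimate the two resulting integrals separately.

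On $\mathcal{N}$, since $0<|x-y|<1$ and $n+sp\le n+s'p$, we have $|x-y|^{n+sp}\ge|x-y|^{n+s'p}$, whence
\[
\iint_{\mathcal{N}}\frac{|u(x)-u(y)|^p}{|x-y|^{n+sp}}\,dxdy
\le\iint_{K\times K}\frac{|u(x)-u(y)|^p}{|x-y|^{n+s'p}}\,dxdy
=[u]_{W^{s',p}(K)}^p.
\]
On $\mathcal{F}$, I would use $|u(x)-u(y)|^p\le 2^{p-1}(|u(x)|^p+|u(y)|^p)$, Fubini's theorem, and the substitution $z=x-y$ to get
\[
\iint_{\mathcal{F}}\frac{|u(x)-u(y)|^p}{|x-y|^{n+sp}}\,dxdy
\le 2^{p}\left(\int_{\{|z|\ge 1\}}\frac{dz}{|z|^{n+sp}}\right)\int_{K}|u(x)|^p\,dx
=\frac{2^p\,n\alpha_n}{sp}\,\|u\|_{L^p(K)}^p,
\]
where $\alpha_n$ is the volume of the unit ball in $\bR^n$ and the radial integral $\int_1^\infty\rho^{-1-sp}\,d\rho=1/(sp)$ is finite precisely because $sp>0$.

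Adding the two contributions gives $[u]_{W^{s,p}(K)}^p\le C(n,s,p)\,\|u\|_{L^p(K)}^p+[u]_{W^{s',p}(K)}^p$. Taking $p$-th roots and using $(a+b)^{1/p}\le a^{1/p}+b^{1/p}$ (valid for $p\ge1$), then adding $\|u\|_{L^p(K)}$ to both sides, yields
\[
\|u\|_{W^{s,p}(K)}=\|u\|_{L^p(K)}+[u]_{W^{s,p}(K)}
\le C(n,s,p)\bigl(\|u\|_{L^p(K)}+[u]_{W^{s',p}(K)}\bigr)=C(n,s,p)\,\|u\|_{W^{s',p}(K)},
\]
which is the assertion.

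As for difficulties, this argument is essentially routine and contains no genuinely hard step. The only point that requires care is that $K$ is merely assumed open, hence possibly unbounded, so one cannot bound $|x-y|^{-(n+sp)}$ by a constant on $\mathcal{F}$; the convergence of $\int_{\{|z|\ge1\}}|z|^{-n-sp}\,dz$, which rests on the hypothesis $s>0$, is exactly what makes the far-field estimate go through, while cutting the integral precisely at $|x-y|=1$ is what makes the near-diagonal term compare cleanly with $[u]_{W^{s',p}(K)}$.
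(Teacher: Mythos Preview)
Your argument is correct and is in fact the standard proof; the paper itself does not give a proof of this lemma but simply cites \cite[Proposition~2.1]{Hitchhiker}, whose proof proceeds by exactly the same near/far splitting at $|x-y|=1$ that you carry out. Nothing needs to be added.
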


\subsection{Weak solution}
Finally, we introduce the notion of weak solution in the following. We hereafter abbreviate  
\begin{align}
U(x,y)&:= |u(x)-u(y)|^{p-2} (u(x)-u(y)),
\label{Uxy} \\
U(x,y,t)&:= |u(x,t)-u(y,t)|^{p-2} (u(x,t)-u(y,t))
\label{Utxy}
\end{align}
for any measurable function $u$ defined on $\bR^n$ and $\bR^n \times \bR$,
respectively.
Such a short-hand notation is used in the fractional operator because of space limitation.
\smallskip

\begin{dfn}[Weak solution]\label{def. of weak sol.}\normalfont
Let $\Omega \subset \bR^n$ be a bounded domain
and $T \in (0,\infty]$. Suppose that the initial datum $u_0$ is in $W^{s, p}_0 (\Omega) \cap L^{q+1} (\Omega)$. 
Let $\mathscr{T}$ be the class of test functions defined by 
\[
\mathscr{T} : = \left\{ \varphi \in L^p (0, T\,; W^{s, p}_0 (\Omega)) :
\left.\begin{array}{c}\partial_t \varphi \in L^{q + 1} (\Omega_T), \\[1mm]\varphi (x, 0) = \varphi (x, T)=0 \quad \textrm{a.e.}\,\, x \in \Omega \end{array}\right.
\right\}.
\]
A measurable function $u=u(x,t)$ defined on a space-time region $\bR^n_T:=\bR^n\times (0,T)$ is said to be a \emph{weak solution} to~\eqref{maineq} provided that the following conditions are satisfied:
\begin{enumerate}[(D1)]
\item $u \in L^\infty(0,T\,; W^{s,p}(\bR^n) \cap L^{q+1}(\bR^n))$.\\[-3mm]
\item There holds
\[
-\iint_{\Omega_T}|u|^{q-1}u \cdot\partial_t\varphi\,dxdt
+\frac{1}{2}\int_0^T\iint_{\bR^n \times \bR^n} \frac{U(x,y,t)}{|x-y|^{n+sp}}(\varphi(x,t)-\varphi(y,t))\,dxdydt=0
\]
for every $\varphi \in \mathscr{T}$.

\item $u$ satisfies the Dirichlet boundary condition in the sense that
\[
u(t) \in W^{s,p}_0(\Omega)\quad \textrm{for a.e.}\,\,\,t \in (0,T),
\]
and attains the initial datum $u_0$ continuously in the fractional Sobolev space:
\[
\lim_{t\searrow 0} \|u(t)-u_0\|_{W^{s,p}(\bR^n)} =0.
\]
\end{enumerate}
\end{dfn}

\section{Approximate solutions}\label{Sect. 3}
%
%
This section is devoted to constructing approximate solutions of~\eqref{maineq} and 
deriving the uniform estimate with respect to $h$. Following~\cite[Section 3]{Nakamura-Misawa}, we introduce a family of nonlocal elliptic equations~\eqref{NE} of Rothe type, characterized by the difference quotient in time variable (refer to~\cite{Rothe}).
Let $h$ be a fixed positive number, sent to zero later. Starting from the initial datum $u_0$ in $W_0^{s,p}(\Omega) \cap L^{q+1}(\Omega)$, we shall inductively define $u_m \in W_0^{s,p}(\Omega) \cap L^{q+1}(\Omega)$ for $m\in\mathbb{N}$ as a weak solution in the sense of~\eqref{weak form NE} below to the following nonlocal integro equation 
\begin{align} \label{NE}
\begin{cases}
\displaystyle \frac{|u_m|^{q-1}u_m - |u_{m-1}|^{q-1} u_{m-1}} {h} + (-\Delta)_p^s u_m =0 
\quad &\hbox{in} \,\,\bR^n \\
u_m=0 \quad &\hbox{in} \,\, \Omega^c.
\end{cases}
\end{align}
%
%
%
%
%
Here, $C^\infty_0 (\Omega)$ is a subspace in $W^{s, p}_0 (\Omega)$ and $L^{q + 1} (\Omega)$, respectively, and thus, $W^{s, p}_0 (\Omega) \cap L^{q + 1} (\Omega)$ is nonempty.

The existence of the weak solution $u_m$ to Eq.~\eqref{NE} is guaranteed by the following lemma, whose proof is given in Appendix~\ref{Appendix A}.
\begin{lem}[Existence of solutions for the nonlocal difference equation]\label{existence of NE}
Suppose that \eqref{NE} has solutions $u_k$ for $k=1,2,\ldots,m-1$.
Then there exists a weak solution $u_m \in W_0^{s,p}(\Omega) \cap L^{q+1}(\Omega)$ to~\eqref{NE}.
\end{lem}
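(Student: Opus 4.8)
The statement to establish is the existence of a weak solution $u_m \in W^{s,p}_0(\Omega)\cap L^{q+1}(\Omega)$ to the Rothe-type nonlocal equation~\eqref{NE}, given the previous iterate $u_{m-1}$. The natural strategy is the \emph{direct method in the calculus of variations}: realize $u_m$ as a minimizer of a suitable convex functional whose Euler--Lagrange equation is precisely the weak form~\eqref{weak form NE} of~\eqref{NE}. First I would introduce the functional
\[
\mathcal{J}_m(v):=\frac{1}{2p}\iint_{\bR^n\times\bR^n}\frac{|v(x)-v(y)|^p}{|x-y|^{n+sp}}\,dxdy+\frac{1}{h}\int_\Omega\Bigl(\frac{|v|^{q+1}}{q+1}-|u_{m-1}|^{q-1}u_{m-1}\,v\Bigr)\,dx
\]
on the admissible class $X:=W^{s,p}_0(\Omega)\cap L^{q+1}(\Omega)$, and show it attains its infimum. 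Observe that the antiderivative of $t\mapsto |t|^{q-1}t$ is $\tfrac{1}{q+1}|t|^{q+1}$, so the second term is chosen exactly so that the first variation of $\mathcal{J}_m$ reproduces the finite-difference term $h^{-1}\bigl(|u_m|^{q-1}u_m-|u_{m-1}|^{q-1}u_{m-1}\bigr)$ tested against $\varphi$, while the first term yields $\langle(-\Delta)^s_p u_m,\varphi\rangle$ as computed in the Introduction via the Gâteaux differential of $\mathcal{E}$.

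\textbf{Key steps.} (i) \emph{Coercivity.} Using the fractional Poincaré inequality (Lemma~\ref{t.Poincare}) the Gagliardo seminorm controls $\|v\|_{L^p(\Omega)}$, hence it controls the full norm $\|v\|_{W^{s,p}_0(\Omega)}$; Young's inequality absorbs the linear term $\int_\Omega |u_{m-1}|^{q-1}u_{m-1}v\,dx$ (note $|u_{m-1}|^{q-1}u_{m-1}\in L^{(q+1)/q}(\Omega)$ since $u_{m-1}\in L^{q+1}(\Omega)$) into a small multiple of $\|v\|_{L^{q+1}(\Omega)}^{q+1}$ plus a constant. This gives $\mathcal{J}_m(v)\to+\infty$ as $\|v\|_X:=[v]_{W^{s,p}(\bR^n)}+\|v\|_{L^{q+1}(\Omega)}\to\infty$, so a minimizing sequence $\{v_k\}$ is bounded in $X$. (ii) \emph{Compactness and weak lower semicontinuity.} Pass to a subsequence with $v_k\rightharpoonup u_m$ weakly in $W^{s,p}_0(\Omega)$ and weakly in $L^{q+1}(\Omega)$; by the compact embedding (Lemma~\ref{compact embedding}) we may also assume $v_k\to u_m$ in $L^r(\Omega)$ for some $r<p^\star_s$ and a.e.\ in $\Omega$ (if $sp\ge n$ one uses instead the embedding into every $L^r$, $r<\infty$, or an elementary argument on the bounded domain). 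The first term of $\mathcal{J}_m$ is convex and strongly continuous on $W^{s,p}_0(\Omega)$, hence weakly l.s.c.; the term $\int_\Omega\frac{|v|^{q+1}}{q+1}\,dx$ is convex and l.s.c.\ with respect to weak $L^{q+1}$ convergence (or use Fatou with the a.e.\ convergence); the linear term is weakly continuous. Therefore $\mathcal{J}_m(u_m)\le\liminf_k\mathcal{J}_m(v_k)=\inf_X\mathcal{J}_m$, and since $u_m\in X$ it is a minimizer. (iii) \emph{Euler--Lagrange equation.} For $\varphi\in W^{s,p}_0(\Omega)\cap L^{q+1}(\Omega)$ and $\tau\in\bR$, the function $\tau\mapsto\mathcal{J}_m(u_m+\tau\varphi)$ is differentiable at $\tau=0$ (differentiate under the integral using the algebraic bound~\eqref{Alg1} and dominated convergence, exactly as in the formal computation of $\nabla\mathcal{E}$ in the Introduction, together with differentiability of $t\mapsto\tfrac{1}{q+1}|t|^{q+1}$); setting the derivative to zero gives
\[
\frac{1}{2}\iint_{\bR^n\times\bR^n}\frac{U_m(x,y)}{|x-y|^{n+sp}}\bigl(\varphi(x)-\varphi(y)\bigr)\,dxdy+\frac{1}{h}\int_\Omega\bigl(|u_m|^{q-1}u_m-|u_{m-1}|^{q-1}u_{m-1}\bigr)\varphi\,dx=0,
\]
which is the weak formulation~\eqref{weak form NE} of~\eqref{NE}; here $U_m(x,y):=|u_m(x)-u_m(y)|^{p-2}(u_m(x)-u_m(y))$.

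\textbf{Main obstacle.} The routine pieces are coercivity and the Euler--Lagrange derivation; the delicate point is the \emph{weak lower semicontinuity of the fractional Dirichlet energy} together with making sense of all integrals under the mere hypotheses $p>1$, $q>0$, $s\in(0,1)$ with no restriction relating $sp$ to $n$. Concretely: one must check $|u_m(\cdot)-u_m(\cdot)|^{p-2}(u_m(\cdot)-u_m(\cdot))/|x-y|^{n+sp}\in L^{p'}(\bR^n\times\bR^n)$ so the double integral in the Euler--Lagrange equation converges absolutely, which follows from $[u_m]_{W^{s,p}(\bR^n)}<\infty$ and Hölder; and one must handle the full range of $q>0$ in the term $\int_\Omega|u_{m-1}|^{q-1}u_{m-1}v\,dx$ when $q<1$ (the exponent $(q+1)/q>2$ is large but still finite, so Young still applies). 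The cleanest route for the lower semicontinuity is to note that $v\mapsto[v]_{W^{s,p}(\bR^n)}^p$ is the $p$-th power of a norm on a reflexive Banach space (for $p>1$), hence convex and strongly continuous, therefore sequentially weakly l.s.c.; combined with the compactness that upgrades weak convergence to a.e.\ convergence, Fatou's lemma then also covers the $L^{q+1}$ term uniformly in $q>0$. Once these integrability and semicontinuity facts are in place, the direct method runs without further difficulty, and an induction on $m$ (with base case $u_0\in X$ given) yields the full sequence $\{u_m\}_{m\in\mathbb{N}}$.
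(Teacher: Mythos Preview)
Your proposal is correct and follows essentially the same route as the paper: define the functional $\mathcal{J}_m$ (the paper calls it $\mathcal{F}$), obtain coercivity via Young's inequality, extract a minimizer by weak/compact convergence with Fatou for lower semicontinuity of the Gagliardo energy, and compute the first variation to recover~\eqref{weak form NE}. The only cosmetic differences are that the paper secures $sp<n$ for the compact embedding by passing to a smaller exponent $s'=\min\{n/(2p),s\}$ via Lemma~\ref{monotone}, and it applies Lemma~\ref{compact embedding} on a ball $B_R\supset\Omega$ (which is Lipschitz) rather than on $\Omega$ itself, since no regularity of $\partial\Omega$ is assumed.
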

%
%
%
%

Let $\{u_m\}_{m\in\mathbb{N}} \subset W^{s,p}_0(\Omega) \cap L^{q+1}(\Omega)$
be a sequence of weak solutions to~\eqref{NE}
constructed by Lemma~\ref{existence of NE}.
Note that the weak solutions satisfy~\eqref{NE} in the following weak sense
\begin{align}\label{weak form NE}
& \int_\Omega \dfrac{|u_m|^{q-1} u_m 
-|u_{m-1}|^{q-1} u_{m-1} }{h}  \, \xi \,dx+\frac{1}{2}\iint_{\bR^n \times \bR^n}
\dfrac{U_m(x,y)(\xi(x)-\xi(y)) }{ 
|x-y|^{n+sp}} \, dxdy =0
\end{align}
for any $\xi \in W_0^{s,p}(\Omega) \cap L^{q+1}(\Omega)$,
where $U_m(x,y)$ is defined along the rule in \eqref{Uxy}:
\[
U_m(x,y):=|u_m(x)-u_m(y)|^{p-2}(u_m(x)-u_m(y)).
\]

Let $h>0$ be an approximation parameter, finally sent to zero.
In terms of the family $\{u_m\}_{m \in \mathbb{N}}$ of solutions to~\eqref{NE}, we define the  six functions defined on $\Omega_\infty$: $\bar{u}_h,\,u_h,\,\bar{v}_h,\,v_h,\,\bar{w}_h$ and $w_h$. Put $t_m:=mh, \,m=0,1,2,\ldots$. For $m = 1, 2, \ldots$, let
%
\begin{equation}\label{approx. sol. for maineq.1}
\begin{cases}
\bar{u}_h(x,t):=
\begin{cases}
u_0(x), &\quad (x,t) \in \Omega \times [- h, 0] \\[1mm]
u_m(x), &\quad (x,t)\in \Omega \times (t_{m-1}, t_m]
\end{cases} \\[10mm]
\bar{v}_h(x,t):=|\bar{u}_h(x,t)|^{q-1}\bar{u}_h(x,t), \qquad (x,t) \times \Omega \in [-h,\infty) \\[3mm]
\bar{w}_h(x,t):=|\bar{u}_h(x,t)|^{\frac{q-1}{2}}\bar{u}_h(x,t), \qquad (x,t) \in \Omega \times [-h,\infty) 
\end{cases}
\end{equation}
and for $(x,t) \in \Omega \times [t_{m-1},t_m]$, $m = 1, 2, \ldots$,
\begin{equation}\label{approx. sol. for maineq.2}
\begin{cases}
u_h(x,t):=\dfrac{t-t_{m-1}}{h}u_m(x)+\dfrac{t_m-t}{h}u_
{m-1}(x), \vspace{3mm}\\
v_h(x,t):=\dfrac{t-t_{m-1}}{h}|u_m(x)|^{q-1}u_m(x)+\dfrac{t_m-t}{h}|u_{m-1}(x)|^{q-1}u_{m-1}(x), \vspace{3mm}\\
w_h(x,t):=\dfrac{t-t_{m-1}}{h}|u_m(x)|^{\frac{q-1}{2}}u_m(x)+\dfrac{t_m-t}{h}|u_{m-1}(x)|^{\frac{q-1}{2}}u_{m-1}(x).
\end{cases}
\end{equation}
We refer to all of six functions $\bar{u}_h$, $\bar{v}_h$, $\bar{w}_h$, $u_h$ , $v_h$ and $w_h$ as \emph{approximate solutions} of~\eqref{maineq}.
By use of these notation, ~\eqref{NE} is formally rewritten as
\[
\partial_t v_h+(-\Delta)_p^s\bar{u}_h=0 \quad \textrm{in}\,\,\,\Omega_\infty,
\]
%
%
%
%
which holds in the sense of distribution as follows:
\begin{align}\label{weak form NE'}
& \iint_{\Omega_T} \partial_tv_h(x,t)\, \varphi(x,t) \,dxdt
+\frac{1}{2}\int_0^T\iint_{\mathbb{R}^n \times \mathbb{R}^n}
\dfrac{\overline{U}_h(x,y,t)}{ 
|x-y|^{n+sp}}(\varphi(x,t)-\varphi(y,t)) \,dxdydt =0
\end{align}
for any positive number $T<\infty$ and all test-functions $\varphi \in L^1\left(0, T \,; W^{s, p}_0 (\Omega) \cap L^{q + 1} (\Omega)\right)$, where 
\[
\overline{U}_h(x,y,t):=|\bar{u}_h(x,t)-\bar{u}_h(y,t)|^{p-2}(\bar{u}_h(x,t)-\bar{u}_h(y,t)).
\]

\medskip

We first gather the elementary estimates for the step on time functions and the linear-interpolated on time ones. These are often used in the proof of convergence later. The proof simply follows from the definition of~\eqref{approx. sol. for maineq.1} and~\eqref{approx. sol. for maineq.2}.
\begin{lem}\label{elementary est.}
Let $\bar{u}_h, u_h, \bar{v}_h, v_h, {\bar w}_h$ and $w_h$ defined as in~\eqref{approx. sol. for maineq.1} and~\eqref{approx. sol. for maineq.2} by use of the family of functions $\{u_m\}$, $m = 0, 1, \ldots$. Let us denote by  $f_m\,\, (m = 0, 1, \cdots)$ each $u_m$,\,$|u_m|^{\frac{q - 1}{2}} u_m$ or $|u_m|^{q - 1} u_m$. Further, denote by $\bar{f}_h$ each of $\bar{u}_h$,  $\bar{w}_h$ or $\bar{v}_h$, and by $f_h$ each of $u_h$, $w_h$ or $v_h$. Then there holds, for any $(x, t) \in \bR^n \times [t_{m-1},t_m]\,\,(m=1,2,\ldots)$,
\begin{equation}\label{1}
\left| f_h (x, t)\right| \leq \frac{t - t_{m - 1}}{h} |f_m (x)|+ \frac{t_m  - t}{h} |f_{m - 1} (x)| ;\end{equation}
\begin{equation}\label{2}
\left|\bar{f}_h (x, t) - f_h (x, t)\right|\leq \frac{t_m - t}{h}\left|f_m (x) - f_{m - 1} (x)\right|.
\end{equation}
In particular, for any $(x, t) \in \bR^n \times [0, \infty)$
\begin{equation}\label{3}
\left| f_h (x, t) \right|  \leq \left|\bar{f}_h(x, t)\right| + \left|\bar{f}_h (x, t - h)\right| ;
 \end{equation}
 \begin{equation}\label{4}
\left| \bar{f}_h (x, t) - f_h (x, t) \right| \leq \left| \bar{f}_h (x, t) - \bar{f}_h (x, t- h)\right|.
 \end{equation}
 \end{lem}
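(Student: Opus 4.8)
}

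The plan is to verify the four inequalities directly from the definitions in~\eqref{approx. sol. for maineq.1} and~\eqref{approx. sol. for maineq.2}, treating the three choices of $f_m$ (namely $u_m$, $|u_m|^{\frac{q-1}{2}}u_m$, $|u_m|^{q-1}u_m$) in a uniform manner, since in each case $\bar f_h$ is the piecewise-constant-in-time interpolation of the $f_m$ while $f_h$ is the piecewise-linear-in-time interpolation. First I would fix $m \geq 1$ and $(x,t) \in \bR^n \times [t_{m-1},t_m]$ and write $\theta := (t - t_{m-1})/h \in [0,1]$, so that $(t_m - t)/h = 1 - \theta$. Then by construction $f_h(x,t) = \theta f_m(x) + (1-\theta) f_{m-1}(x)$ and $\bar f_h(x,t) = f_m(x)$ (using the convention that on $(t_{m-1},t_m]$ we take the value $u_m$, and $f_{m-1} = f(u_0)$ when $m=1$). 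Inequality~\eqref{1} is then just the triangle inequality $|\theta f_m + (1-\theta) f_{m-1}| \leq \theta |f_m| + (1-\theta)|f_{m-1}|$, and~\eqref{2} follows from $\bar f_h(x,t) - f_h(x,t) = f_m(x) - \bigl(\theta f_m(x) + (1-\theta) f_{m-1}(x)\bigr) = (1-\theta)\bigl(f_m(x) - f_{m-1}(x)\bigr)$, whence $|\bar f_h - f_h| = (1-\theta)|f_m - f_{m-1}| = \frac{t_m - t}{h}|f_m - f_{m-1}|$, which is in fact an equality and a fortiori the claimed bound.

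For~\eqref{3} and~\eqref{4} I would pass from the ``discrete'' right-hand sides of~\eqref{1},~\eqref{2} to the ``shifted'' ones by observing that, for $(x,t) \in \bR^n \times [t_{m-1},t_m]$, one has $\bar f_h(x,t) = f_m(x)$ and $\bar f_h(x, t-h) = f_{m-1}(x)$: indeed $t - h \in [t_{m-2}, t_{m-1}]$, and on that interval $\bar f_h$ takes the value $f_{m-1}$ (with the boundary case $t-h \in [-h,0]$ giving $f_0 = f(u_0)$, consistent with the convention). Substituting these two identities into~\eqref{1} yields $|f_h(x,t)| \leq \theta |\bar f_h(x,t)| + (1-\theta)|\bar f_h(x,t-h)| \leq |\bar f_h(x,t)| + |\bar f_h(x,t-h)|$ since $\theta, 1-\theta \in [0,1]$, which is~\eqref{3}. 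Likewise substituting into the equality form of~\eqref{2} gives $|\bar f_h(x,t) - f_h(x,t)| = (1-\theta)|\bar f_h(x,t) - \bar f_h(x,t-h)| \leq |\bar f_h(x,t) - \bar f_h(x,t-h)|$, which is~\eqref{4}. Finally one checks that these hold for a.e.\ $(x,t) \in \bR^n \times [0,\infty)$ by letting $m$ range over $\mathbb{N}$, the overlaps at the endpoints $t = t_m$ being a null set and causing no trouble.

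There is essentially no analytic obstacle here; the only points requiring a little care are bookkeeping ones: keeping the convention $u_m \equiv 0$ on $\Omega^c$ (so the inequalities genuinely hold on all of $\bR^n$, not merely on $\Omega$, as the statement asserts), handling the initial interval $[-h,0]$ where $\bar u_h = u_0$ so that the $m=1$ case of~\eqref{3}--\eqref{4} makes sense, and noting that the elementary inequality $|\theta a + (1-\theta) b| \leq \theta|a| + (1-\theta)|b| \leq |a| + |b|$ used throughout is just convexity of $|\cdot|$ together with $\theta, 1-\theta \leq 1$. Thus the main ``difficulty'' is simply to organize the convention-tracking cleanly; the computations themselves are immediate.
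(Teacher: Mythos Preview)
Your proposal is correct and is exactly the approach the paper takes: the paper's own ``proof'' consists of the single sentence that the inequalities ``simply follow from the definition of~\eqref{approx. sol. for maineq.1} and~\eqref{approx. sol. for maineq.2}'', and your write-up is precisely the direct verification from those definitions via the convex combination $f_h = \theta f_m + (1-\theta)f_{m-1}$ and the identification $\bar f_h(\cdot,t)=f_m$, $\bar f_h(\cdot,t-h)=f_{m-1}$.
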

\medskip

We next derive the energy estimates as follows.
\begin{lem}[Energy estimates]\label{energy est. NE}
Let $\bar{u}_h$ and $u_h$ be the approximate solutions of~\eqref{maineq} defined as in~\eqref{approx. sol. for maineq.1} and~\eqref{approx. sol. for maineq.2}.
Then it holds that, for any positive number $T<\infty$,
\begin{equation}\label{energy est. NE1}
\sup_{0<t<T}\int_\Omega |\bar{u}_h(t)|^{q+1}\,dx\leq \int_\Omega |u_0|^{q+1}\,dx,
\end{equation}
\begin{equation}\label{energy est. NE2}
\int_0^T[\bar{u}_h(t)]_{W^{s,p}(\bR^n)}^p\,dt
\leq \frac{2q}{q+1}\int_\Omega |u_0|^{q+1}\,dx, 
\end{equation}
\begin{equation}\label{energy est. NE3}
\iint_{\Omega_T} \bigg(|\bar{u}_h (x,t)|
 + |\bar{u}_h (x,t - h)|\bigg)^{q - 1}|\partial_t u_h(x,t)|^2\,dxdt\leq C[u_0]_{W^{s,p}(\bR^n)}^p,
 \end{equation}
 where the positive constant $C$ depends only on $p$ and $q$, and
 \begin{equation} \label{energy est. NE4}
 \sup_{0<t<T} [\bar{u}_h(t)]_{W^{s,p}(\bR^n)}\leq [u_0]_{W^{s,p}(\bR^n)}.
 \end{equation}
\end{lem}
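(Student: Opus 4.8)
The plan is to test the weak formulation~\eqref{weak form NE} of the Rothe scheme with carefully chosen test functions, sum over $m$, and exploit convexity/algebraic inequalities to produce the four stated bounds. For~\eqref{energy est. NE1} and~\eqref{energy est. NE2}, the natural choice is $\xi = u_m$. In the first term one writes $\bigl(|u_m|^{q-1}u_m - |u_{m-1}|^{q-1}u_{m-1}\bigr)u_m$ and uses the elementary convexity inequality (Young-type) $ (|a|^{q-1}a - |b|^{q-1}b)\,a \ge \tfrac{1}{q+1}\bigl(|a|^{q+1} - |b|^{q+1}\bigr)$, valid for $q>0$; the second term is exactly $[u_m]^p_{W^{s,p}(\bR^n)}\ge 0$. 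Thus
\[
\frac{1}{q+1}\int_\Omega\bigl(|u_m|^{q+1}-|u_{m-1}|^{q+1}\bigr)\,dx + h\,[u_m]^p_{W^{s,p}(\bR^n)} \le 0.
\]
Telescoping from $m=1$ to any $M$ gives $\int_\Omega |u_M|^{q+1}\,dx \le \int_\Omega |u_0|^{q+1}\,dx$, which is~\eqref{energy est. NE1} after recalling $\bar u_h(t)=u_m$ on $(t_{m-1},t_m]$. Keeping the second term and summing yields $\sum_m h\,[u_m]^p_{W^{s,p}(\bR^n)} \le \tfrac{1}{q+1}\int_\Omega|u_0|^{q+1}\,dx$; since $\int_0^T [\bar u_h(t)]^p\,dt = \sum h\,[u_m]^p$ (up to the last partial interval, which only helps), this is essentially~\eqref{energy est. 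NE2}; the factor $\tfrac{2q}{q+1}$ rather than $\tfrac{1}{q+1}$ presumably comes from being slightly wasteful in the convexity step or from handling the final incomplete time-step, and I would simply keep the crude constant.

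For~\eqref{energy est. NE4} the idea is different: test~\eqref{weak form NE} with $\xi = |u_m|^{q-1}u_m - |u_{m-1}|^{q-1}u_{m-1}$, which lies in $W^{s,p}_0(\Omega)\cap L^{q+1}(\Omega)$. The first term becomes $\tfrac1h\int_\Omega\bigl(|u_m|^{q-1}u_m-|u_{m-1}|^{q-1}u_{m-1}\bigr)^2\,dx \ge 0$, and is exactly $h\int_\Omega|\partial_t v_h|^2\,dx$ on that slab. For the nonlocal term one uses that $(-\Delta)^s_p$ is the gradient of the convex functional $\mathcal E$ (with the normalization in the paper, $p\,\mathcal E(u)=\tfrac1p[u]^p_{W^{s,p}}$ up to the $\tfrac1{2p}$ factor), so by monotonicity
\[
\frac12\iint_{\bR^n\times\bR^n}\frac{U_m(x,y)\bigl((\phi_m-\phi_{m-1})(x)-(\phi_m-\phi_{m-1})(y)\bigr)}{|x-y|^{n+sp}}\,dxdy \;\ge\; \frac1p[u_m]^p_{W^{s,p}(\bR^n)} - \frac1p[u_{m-1}]^p_{W^{s,p}(\bR^n)},
\]
where $\phi_m:=|u_m|^{q-1}u_m$; this is just the subgradient inequality $\langle \nabla\mathcal E(u_m), u_m - u_{m-1}\rangle \ge \mathcal E(u_m) - \mathcal E(u_{m-1})$ applied with the roles arranged so that $u_m-u_{m-1}$ is replaced by $\phi_m-\phi_{m-1}$ — this requires a monotonicity comparison between the increments of $u$ and of $|u|^{q-1}u$, which is where Lemma~\ref{Algs} enters. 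Discarding the nonnegative time term and telescoping yields $[u_M]^p_{W^{s,p}(\bR^n)} \le [u_0]^p_{W^{s,p}(\bR^n)}$, hence~\eqref{energy est. NE4}.

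For~\eqref{energy est. NE3} I would again test with $\xi = \phi_m - \phi_{m-1} = |u_m|^{q-1}u_m - |u_{m-1}|^{q-1}u_{m-1}$ but now extract information from the \emph{time} term rather than discard it. The key pointwise inequality is that for $q>0$ there is $c(q)>0$ with
\[
\bigl(|a|^{q-1}a - |b|^{q-1}b\bigr)^2 \;\ge\; c(q)\,\bigl(|a|+|b|\bigr)^{q-1}\bigl(|a|^{\frac{q+1}{2}}\mathrm{sgn}\,a - |b|^{\frac{q+1}{2}}\mathrm{sgn}\,b\bigr)^2 \quad\text{?}
\]
— more simply, one shows $\tfrac1h\int_\Omega(\phi_m-\phi_{m-1})^2\,dx \ge c(q)\int_\Omega(|u_m|+|u_{m-1}|)^{q-1}\tfrac{|u_m-u_{m-1}|^2}{h}\,dx$ using~\eqref{Alg1} with $\alpha=q+1$ (so $|\phi_m-\phi_{m-1}|\le C(|u_m|+|u_{m-1}|)^{q-1}|u_m-u_{m-1}|$ — wait, the inequality must go the other way) \emph{and}~\eqref{Algpre}, which gives $(\phi_m-\phi_{m-1})(u_m-u_{m-1}) \ge C(|u_m|+|u_{m-1}|)^{q-1}|u_m-u_{m-1}|^2$. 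Combining Cauchy–Schwarz on $\int(\phi_m-\phi_{m-1})(u_m-u_{m-1})$ with these two bounds of Lemma~\ref{Algs} produces precisely $\int_\Omega(|u_m|+|u_{m-1}|)^{q-1}\tfrac{|u_m-u_{m-1}|^2}{h}\,dx \le \tfrac C h \int_\Omega(\phi_m-\phi_{m-1})(u_m-u_{m-1})\,dx$. Now test the \emph{original} equation~\eqref{weak form NE} with $\xi = u_m$ is not what I want; rather, I use the already-established telescoping from~\eqref{energy est. NE4}'s computation: the sum $\sum_m \tfrac1h\int_\Omega(\phi_m-\phi_{m-1})^2\,dx$ together with $[u_M]^p \le [u_0]^p$ is controlled by $[u_0]^p_{W^{s,p}(\bR^n)}$, and since $\partial_t u_h = (u_m-u_{m-1})/h$ on the slab while $(|\bar u_h(t)|+|\bar u_h(t-h)|)^{q-1}=(|u_m|+|u_{m-1}|)^{q-1}$ there, multiplying by $h$ and summing gives~\eqref{energy est. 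NE3}. The main obstacle is the bookkeeping of the algebraic inequality linking the increment of $|u|^{q-1}u$ to the weighted increment of $u$ — one must carefully distinguish the cases $q\ge1$ and $0<q<1$ (in the latter $\alpha=q+1<2$, so~\eqref{Alg2} is unavailable and one genuinely needs the weighted form~\eqref{Algpre}, with the weight $(|u_m|+|u_{m-1}|)^{q-1}$ possibly degenerate where $u$ vanishes), and to verify that all the intermediate quantities are finite, which follows from $u_m\in L^{q+1}(\Omega)$ and Lemma~\ref{t.Poincare}. The telescoping and the passage from the discrete sums to the space-time integrals over $\Omega_T$ are then routine, using only the definitions~\eqref{approx. sol. for maineq.1}–\eqref{approx. sol. for maineq.2}.
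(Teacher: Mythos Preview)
Your plan for~\eqref{energy est. NE1} and~\eqref{energy est. NE2} is essentially the paper's: test with $u_m$, use Young's inequality on the time term, telescope. The constant discrepancy is just bookkeeping (the paper's Young step yields $\tfrac{q}{q+1}$, and the $\tfrac12$ in front of the nonlocal term contributes the factor $2$).

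For~\eqref{energy est. NE3} and~\eqref{energy est. NE4}, however, there is a genuine gap. Your chosen test function $\xi=\phi_m-\phi_{m-1}=|u_m|^{q-1}u_m-|u_{m-1}|^{q-1}u_{m-1}$ is \emph{not} known to lie in $W^{s,p}_0(\Omega)\cap L^{q+1}(\Omega)$: from $u_m\in L^{q+1}$ one only gets $\phi_m\in L^{(q+1)/q}$, and membership in $W^{s,p}_0$ is likewise unclear without $L^\infty$ bounds. Even granting admissibility, the convexity argument you sketch does not go through: convexity of $\mathcal E$ gives $\langle\nabla\mathcal E(u_m),u_m-u_{m-1}\rangle\ge\mathcal E(u_m)-\mathcal E(u_{m-1})$, not the same inequality with $\phi_m-\phi_{m-1}$ in place of $u_m-u_{m-1}$, and there is no ``monotonicity comparison'' from Lemma~\ref{Algs} that repairs this. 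Your treatment of the time term for~\eqref{energy est. NE3} is also tangled: the quantity $\int(\phi_m-\phi_{m-1})^2$ has the weight $(|u_m|+|u_{m-1}|)^{2(q-1)}$, not $(|u_m|+|u_{m-1}|)^{q-1}$.

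The paper's fix is simpler: test with $\xi=u_m-u_{m-1}$, which is manifestly admissible. The time term becomes $\tfrac1h\int_\Omega(\phi_m-\phi_{m-1})(u_m-u_{m-1})\,dx$, which by~\eqref{Algpre} with $\alpha=q+1$ is bounded below by $\tfrac{C}{h}\int_\Omega(|u_m|+|u_{m-1}|)^{q-1}|u_m-u_{m-1}|^2\,dx$ --- exactly the integrand in~\eqref{energy est. NE3}. For the nonlocal term one applies Young's inequality pointwise (i.e., convexity of $t\mapsto|t|^p/p$) to get
\[
\tfrac12\iint\frac{U_m(x,y)\bigl[(u_m-u_{m-1})(x)-(u_m-u_{m-1})(y)\bigr]}{|x-y|^{n+sp}}\,dxdy\;\ge\;\tfrac{1}{2p}[u_m]^p_{W^{s,p}(\bR^n)}-\tfrac{1}{2p}[u_{m-1}]^p_{W^{s,p}(\bR^n)}.
\]
Summing the resulting inequality in $m$ yields~\eqref{energy est. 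NE3} and~\eqref{energy est. NE4} simultaneously.
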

\begin{proof}
Throughout this proof, we choose an integer $N$ to satisfsy $t_{N-1} <T \leq t_N$.
Testing ~\eqref{weak form NE} by $\xi=h u_{m}$ and summing up from $m=1$ to $k$ ($k=1,2,\dotsc, N$), we have
\begin{align*}
\sum \limits_{m=1}^k \int_\Omega \left(|u_{m}|^{q-1}u_{m}-|u_{m-1}|^{q-1}u_{m-1} \right)u_m\,dx +\frac{1}{2}\sum_{m=1}^k h\iint_{\bR^n \times \bR^n}\dfrac{|u_m(x)-u_m(y)|^p}{|x-y|^{n+sp}}\,dxdy=0
\end{align*}
and thus,
\begin{equation}\label{appenergyeq1}
\sum \limits_{m=1}^k \int_\Omega \left(|u_{m}|^{q-1}u_{m}-|u_{m-1}|^{q-1}u_{m-1} \right)u_m\,dx+\frac{1}{2}\sum_{m=1}^k h [u_m]_{W^{s,p}(\bR^n)}^p=0.
\end{equation}
By Young's inequality, the first term on the left-hand side of~\eqref{appenergyeq1} is estimated as 
\begin{align*}
\sum \limits_{m=1}^k \int_\Omega \left(|u_{m}|^{q-1}u_{m}-|u_{m-1}|^{q-1}u_{m-1} \right)u_m\,dx
&\geq \sum_{m=1}^k \frac{q}{q+1} \int_\Omega \left(|u_m|^{q+1}-|u_{m-1}|^{q+1} \right)\,dx \\[2mm]
&= \frac{q}{q+1}\int_\Omega \left(|u_k|^{q+1}-|u_{0}|^{q+1}\right) \,dx.
\end{align*}
On substitution into~\eqref{appenergyeq1}, we arrive at
\begin{equation}\label{appenergyeq2}
\frac{q}{q+1}\int_\Omega |u_k|^{q+1}\,dx+\frac{1}{2} \sum \limits_{m=1}^k h [u_m]_{W^{s,p}(\bR^n)}^p \leq \frac{q}{q+1}\int_\Omega |u_0|^{q+1}\,dx.
\end{equation}
%
Then we neglect the second term and the first one in the left-hand side of~\eqref{appenergyeq2} to get
\[
\sup_{1\leq k\leq N}\int_\Omega |u_k|^{q+1}\,dx \leq \int_\Omega |u_0|^{q+1}\,dx, \]
and
\[
\int_0^{t_N} [\bar{u}_h(t)]_{W^{s,p}(\bR^n)}^p \,dt \leq \frac{2q}{q+1}\int_\Omega |u_0|^{q+1}\,dx,
\]
respectively.
%
%
Those inequalities easily lead to
\[
\sup_{0<t<T}\int_\Omega |\bar{u}_h(t)|^{q+1}\,dx \leq \int_\Omega |u_0|^{q+1}\,dx
\]
and
\[
\int_0^T[\bar{u}_h(t)]_{W^{s,p}(\bR^n)}^p\,dt \leq \frac{2q}{q+1}\int_\Omega |u_0|^{q+1}\,dx.
\]
Thus estimates~\eqref{energy est. NE1} and~\eqref{energy est. NE2}
are claimed.
%
%
%
\smallskip

On the other hand, we take $\displaystyle \xi=u_{m}-u_{m-1}$ in~\eqref{weak form NE} and sum up the resultant equality from $m=1$ to $k$, where $k=1,2,\cdots,N$. Then we have
\begin{align}\label{appenergyeq4}
&\sum \limits_{m=1}^k \dfrac{1}{h}\int_\Omega (|u_{m}|^{q-1}u_{m}-|u_{m-1}|^{q-1}u_{m-1}) (u_{m}-u_{m-1}) \,dx\notag \\[2mm]
&+\frac{1}{2}\sum\limits_{m=1}^k\,\,\,\iint_{\bR^n\times \bR^n}
\dfrac{U_m(x,y)\big[(u_m(x)-u_{m-1}(x))-(u_m(y)-u_{m-1}(y))\big]}
{|x-y|^{n+sp}} \,dxdy =0.
\end{align}
By~\eqref{Algpre} in Lemma~\ref{Algs} with $\alpha = q + 1$, the integrand of the 1st term on the left-hand side of~\eqref{appenergyeq4} is estimated as 
\begin{align}\label{appenergyeq5}
&\quad \left(|u_{m}|^{q-1}u_{m}-|u_{m-1}|^{q-1}u_{m-1} \right)(u_{m}-u_{m-1}) \notag \\[2mm]
&\geq C(q) \big(|u_m| + |u_{m - 1}|\big)^{q - 1} |u_m - u_{m - 1}|^2.
%
\end{align}
Again, by Young's inequality, we observe that
\begin{align}\label{appenergyeq6}
&\iint_{\bR^n\times \bR^n}
\dfrac{U_m(x,y)\big[(u_m(x)-u_{m-1}(x))-(u_m(y)-u_{m-1}(y))\big]}
{|x-y|^{n+sp}} \,dxdy \notag\\[4mm]
&\geq \iint_{\bR^n \times \bR^n}\dfrac{|u_m(x)-u_m(y)|^p}{|x-y|^{n+sp}}\,dxdy \notag\\
&\quad \quad \quad -\dfrac{1}{p}\iint_{\bR^n \times \bR^n}\dfrac{|u_{m-1}(x)-u_{m-1}(y)|^p}{|x-y|^{n+sp}}\,dxdy -\dfrac{p-1}{p}\iint_{\bR^n \times \bR^n}\dfrac{|u_{m}(x)-u_{m}(y)|^p}{|x-y|^{n+sp}}\,dxdy \notag\\[4mm]
&= \frac{1}{p}\iint_{\bR^n \times \bR^n}\dfrac{|u_m(x)-u_m(y)|^p}{|x-y|^{n+sp}}\,dxdy-\frac{1}{p}\iint_{\bR^n \times \bR^n}\dfrac{|u_{m-1}(x)-u_{m-1}(y)|^p}{|x-y|^{n+sp}}\,dxdy.
\end{align}
Gathering~\eqref{appenergyeq5} and~\eqref{appenergyeq6} in~\eqref{appenergyeq4} yields 
\begin{align}\label{appenergyeq7}
&\sum \limits_{m=1}^k h\int_\Omega C \big(|u_m| + |u_{m - 1}|\big)^{q - 1}\bigg|\frac{u_m-u_{m-1}}{h} \bigg|^2 \,dx \notag \\
&\quad \quad \quad+\frac{1}{2p}\iint_{\bR^n \times \bR^n}\dfrac{|u_k(x)-u_k(y)|^p}{|x-y|^{n+sp}}\,dxdy \leq  \dfrac{1}{2p} \iint_{\bR^n \times \bR^n}\dfrac{|u_0(x)-u_0(y)|^p}{|x-y|^{n+sp}}\,dxdy.
\end{align}
Neglecting either term in the left-hand side, we obtain
\[
C(q)\int_0^{t_k} \int_\Omega \bigg(|\bar{u}_h(x,t)| + |\bar{u}_h(x,t-h)|\bigg)^{q - 1}|\partial_t u_h(x,t)|^2\,dxdt\leq \frac{1}{2p}[u_0]_{W^{s,p}(\bR^n)}^p
\]
and
\[
\max_{1\leq k\leq N} [\bar{u}_h(t)]_{W^{s,p}(\bR^n)}^p\leq [u_0]_{W^{s,p}(\bR^n)}^p,
\]
respectively. Therefore the proof is complete.
%
\end{proof}
The statements of Lemma~\ref{energy est. NE} naturally also holds true for the linear-interpolated solutions by the definition. The proof is clear by~\eqref{1}--\eqref{3} in Lemma~\ref{elementary est.}:
\begin{lem}[Energy estimates]\label{energy est. NE'}
Let $u_h$ be the approximate solution of~\eqref{maineq} defined in~\eqref{approx. sol. for maineq.2}.
Then it holds true that, for any positive number $T<\infty$,
\begin{equation}\label{energy est. NE1'}
\sup_{0<t<T}\int_\Omega |u_h(t)|^{q+1}\,dx\leq \int_\Omega |u_0|^{q+1}\,dx,
\end{equation}
\begin{equation}\label{energy est. NE2'}
\int_0^T[u_h(t)]_{W^{s,p}(\bR^n)}^p\,dt
\leq 2^{p-1}\left(\frac{4q}{q+1}\int_\Omega |u_0|^{q+1}\,dx+h[u_0]_{W^{s,p}(\bR^n)}^p\right), 
\end{equation}
 where the positive constant $C$ depends only on $p$ and $q$, and
 \begin{equation} \label{energy est. NE3'}
 \sup_{0<t<T} [u_h(t)]_{W^{s,p}(\bR^n)}\leq [u_0]_{W^{s,p}(\bR^n)}.
 \end{equation}
\end{lem}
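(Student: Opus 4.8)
The plan is to deduce Lemma~\ref{energy est. NE'} directly from the already-established Lemma~\ref{energy est. NE} by exploiting the pointwise comparison between the linear interpolants and the step functions recorded in Lemma~\ref{elementary est.}. This is a purely mechanical transfer of estimates, so I will not reprove anything from scratch; the only genuine work is keeping track of the constants coming from convexity and from the elementary inequality $(a+b)^p \le 2^{p-1}(a^p+b^p)$.

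\textbf{Step 1: the $L^{q+1}$ bound \eqref{energy est. NE1'}.} Fix $t \in (t_{m-1}, t_m]$. By \eqref{1} in Lemma~\ref{elementary est.} applied with $f_m = u_m$, together with the convexity of $r \mapsto |r|^{q+1}$ (here $q+1 > 1$), we have pointwise
\[
|u_h(x,t)|^{q+1} \le \frac{t - t_{m-1}}{h}|u_m(x)|^{q+1} + \frac{t_m - t}{h}|u_{m-1}(x)|^{q+1}.
\]
Integrating in $x$ over $\Omega$ and bounding each of $\int_\Omega |u_m|^{q+1}$, $\int_\Omega |u_{m-1}|^{q+1}$ by $\int_\Omega |u_0|^{q+1}$ via \eqref{energy est. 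NE1} (equivalently \eqref{appenergyeq2}), the convex combination is again bounded by $\int_\Omega |u_0|^{q+1}$, and taking the supremum over $t \in (0,T)$ yields \eqref{energy est. NE1'}.

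\textbf{Step 2: the seminorm bounds \eqref{energy est. NE2'} and \eqref{energy est. NE3'}.} For \eqref{energy est. NE3'}, use \eqref{3} in Lemma~\ref{elementary est.} with $f_h = u_h$, $\bar f_h = \bar u_h$, so that $|u_h(x,t)| \le |\bar u_h(x,t)| + |\bar u_h(x,t-h)|$; then, since the Gagliardo seminorm is subadditive (triangle inequality in $W^{s,p}(\bR^n)$),
\[
[u_h(t)]_{W^{s,p}(\bR^n)} \le [\bar u_h(t)]_{W^{s,p}(\bR^n)} + [\bar u_h(t-h)]_{W^{s,p}(\bR^n)}.
\]
Hmm --- this naive route gives a factor $2$, which is weaker than the stated \eqref{energy est. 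NE3'}. The cleaner argument is to go back to the linear-interpolation formula for $u_h$ directly: $u_h(t) = \theta u_m + (1-\theta) u_{m-1}$ with $\theta = (t-t_{m-1})/h \in [0,1]$, so by convexity of $v \mapsto [v]_{W^{s,p}(\bR^n)}^p$ (it is a norm to the $p$-th power, $p > 1$),
\[
[u_h(t)]_{W^{s,p}(\bR^n)}^p \le \theta [u_m]_{W^{s,p}(\bR^n)}^p + (1-\theta)[u_{m-1}]_{W^{s,p}(\bR^n)}^p \le [u_0]_{W^{s,p}(\bR^n)}^p,
\]
the last step using $\max_k [u_k]_{W^{s,p}(\bR^n)}^p \le [u_0]_{W^{s,p}(\bR^n)}^p$ from \eqref{appenergyeq7}; taking $\sup_{0<t<T}$ gives \eqref{energy est. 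NE3'}. For the time-integral bound \eqref{energy est. NE2'}, I would instead use \eqref{3} to write $[u_h(t)]_{W^{s,p}}^p \le \big([\bar u_h(t)]_{W^{s,p}} + [\bar u_h(t-h)]_{W^{s,p}}\big)^p \le 2^{p-1}\big([\bar u_h(t)]_{W^{s,p}}^p + [\bar u_h(t-h)]_{W^{s,p}}^p\big)$, integrate over $(0,T)$, and apply \eqref{energy est. NE2} to the first term and to the shifted second term (whose integral over $(0,T)$ is the integral of $[\bar u_h]_{W^{s,p}}^p$ over $(-h, T-h)$, bounded by $\int_{-h}^0 [u_0]_{W^{s,p}}^p\,dt + \int_0^T [\bar u_h(t)]_{W^{s,p}}^p\,dt$ since $\bar u_h \equiv u_0$ on $[-h,0]$, using \eqref{energy est. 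NE4} on the remaining piece). Collecting the pieces,
\[
\int_0^T [u_h(t)]_{W^{s,p}(\bR^n)}^p\,dt \le 2^{p-1}\left(\frac{4q}{q+1}\int_\Omega |u_0|^{q+1}\,dx + h[u_0]_{W^{s,p}(\bR^n)}^p\right),
\]
which is exactly \eqref{energy est. NE2'}; the $h[u_0]_{W^{s,p}}^p$ term is precisely the contribution of the interval $[-h,0]$ where $\bar u_h = u_0$, and the factor $\frac{4q}{q+1} = 2 \cdot \frac{2q}{q+1}$ accounts for the two copies of the bound \eqref{energy est. NE2}.

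\textbf{Main obstacle.} There is no deep difficulty here --- the lemma is a corollary of the two preceding ones --- but the one point requiring care is the bookkeeping of the shifted integral $\int_0^T [\bar u_h(t-h)]_{W^{s,p}(\bR^n)}^p\,dt$: one must not forget that $\bar u_h$ is defined on $[-h,\infty)$ with $\bar u_h \equiv u_0$ on $[-h,0]$, which is exactly what produces the extra additive term $h[u_0]_{W^{s,p}(\bR^n)}^p$ in \eqref{energy est. NE2'}. Getting the constant $2^{p-1}$ and the coefficient $\frac{4q}{q+1}$ to match the statement is then just a matter of applying $(a+b)^p \le 2^{p-1}(a^p + b^p)$ and \eqref{energy est. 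NE2} twice.
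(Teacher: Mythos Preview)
Your proof is correct and follows essentially the same route as the paper's: both reduce \eqref{energy est. NE1'} and \eqref{energy est. NE3'} to Lemma~\ref{energy est. NE} via the convex-combination structure of $u_h$ (the paper phrases this as ``Minkowski inequality and \eqref{1}'', which is exactly your convexity argument), and both obtain \eqref{energy est. NE2'} from \eqref{3}, the inequality $(a+b)^p\le 2^{p-1}(a^p+b^p)$, and \eqref{energy est. NE2} applied to the two shifted copies, with the $h[u_0]_{W^{s,p}(\bR^n)}^p$ term coming from the interval $[-h,0]$ where $\bar u_h\equiv u_0$. Your stream-of-consciousness detour through \eqref{3} for \eqref{energy est. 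NE3'} before settling on the direct convexity argument is harmless; the final argument matches the paper's.
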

\begin{proof}
The inequalities~\eqref{energy est. NE1'} and~\eqref{energy est. NE3'} follows from the Minkowski inequality,~\eqref{1} in Lemma~\ref{elementary est.} and~\eqref{energy est. NE1}, \eqref{energy est. NE4} in Lemma~\ref{energy est. NE}, respectively.
For the inequality~\eqref{energy est. NE2'}, by~\eqref{3} in Lemma~\ref{elementary est.} and the Minkowski inequality we have
\[
\int_0^T[u_h (t)]_{W^{s, p} (\bR^n)}^p\,dt\leq2^{p-1}\left(\int_0^T[{\bar u}_h (t)]_{W^{s, p} (\bR^n)}^p\,dt + \int_0^T [{\bar u}_h (t - h)]_{W^{s, p} (\bR^n)}^p\,dt
\right),
\]
of which the last terms are applied by~\eqref{energy est. NE2} in Lemma~\ref{energy est. NE} and thus, ~\eqref{energy est. NE2'} follows.
\end{proof}
\begin{lem}[Time-derivative estimate]\label{L2-est.}
Let $v_h$ and $w_h$ be the approximate solutions of~\eqref{maineq} defined by~\eqref{approx. sol. for maineq.2}. 
Then the time derivatives of approximating solutions $\{\partial_t w_h\}_{h>0}$ are bounded in $L^2 (\Omega_T)$ for any positive $T < \infty$, namely,

\begin{equation}\label{(i)}
 \iint_{\Omega_T} |\partial_t w_h|^2\,dxdt \leq C [u_0]_{W^{s,p}(\bR^n)}^p
\end{equation}
and, specifically, if $q \geq 1$, the time derivatives of approximating solutions $\{\partial_t v_h\}$ are bounded in $L^1 (\Omega_T)$ for any positive $T < \infty$, 
\begin{equation}\label{(ii)}
\iint_{\Omega_T} |\partial_t v_h|\,dxdt \leq C|\Omega_T|^{\frac{1}{q+1}}\|u_0\|_{L^{q+1}(\Omega)}^{\frac{q-1}{2}}[u_0]_{W^{s,p}(\bR^n)}^{\frac{p}{2}}\end{equation}
with a positive constant $C=C(p,q)$.
\end{lem}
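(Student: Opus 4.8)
The plan is to reduce both bounds, slab by slab in time, to the energy estimate~\eqref{energy est. NE3} (together with~\eqref{energy est. NE1} for the extra weight), using only the algebraic inequalities of Lemma~\ref{Algs}. On each interval $[t_{m-1},t_m]$ the interpolants in~\eqref{approx. sol. for maineq.2} are affine in $t$, so their distributional time derivatives exist a.e.\ and equal the difference quotients
\[
\partial_t w_h(x,t)=\frac{|u_m(x)|^{\frac{q-1}{2}}u_m(x)-|u_{m-1}(x)|^{\frac{q-1}{2}}u_{m-1}(x)}{h},\qquad
\partial_t v_h(x,t)=\frac{|u_m(x)|^{q-1}u_m(x)-|u_{m-1}(x)|^{q-1}u_{m-1}(x)}{h},
\]
while $\partial_t u_h(x,t)=h^{-1}(u_m(x)-u_{m-1}(x))$ and, by~\eqref{approx. sol. for maineq.1}, $|\bar{u}_h(x,t)|=|u_m(x)|$, $|\bar{u}_h(x,t-h)|=|u_{m-1}(x)|$ on that slab (for $m=1$ the shift lands in $[-h,0]$ where $\bar{u}_h=u_0=u_0$). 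So it suffices to estimate the right-hand sides above in terms of $(|u_m|+|u_{m-1}|)^{\beta}|u_m-u_{m-1}|$ and then integrate over $\Omega_T$.

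For~\eqref{(i)} I would invoke~\eqref{Alg1} of Lemma~\ref{Algs} with the choice $\alpha=\tfrac{q+3}{2}$, which is admissible since $\alpha>1$ for every $q>0$ and satisfies $|\xi|^{\alpha-2}\xi=|\xi|^{\frac{q-1}{2}}\xi$. It gives, pointwise in $x$,
\[
\Big||u_m|^{\frac{q-1}{2}}u_m-|u_{m-1}|^{\frac{q-1}{2}}u_{m-1}\Big|^2\le C(q)\,\big(|u_m|+|u_{m-1}|\big)^{q-1}|u_m-u_{m-1}|^2,
\]
hence $|\partial_t w_h(x,t)|^2\le C(q)\big(|\bar{u}_h(x,t)|+|\bar{u}_h(x,t-h)|\big)^{q-1}|\partial_t u_h(x,t)|^2$ a.e.\ in $\Omega_T$. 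Integrating over $\Omega_T$ and applying~\eqref{energy est. NE3} yields~\eqref{(i)} with $C=C(p,q)$.

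For~\eqref{(ii)}, assume now $q\ge1$. Applying~\eqref{Alg1} with $\alpha=q+1$ gives $|\partial_t v_h|\le C(q)\big(|\bar{u}_h(x,t)|+|\bar{u}_h(x,t-h)|\big)^{q-1}|\partial_t u_h|$ a.e. Writing the weight as $(\,\cdot\,)^{q-1}=(\,\cdot\,)^{\frac{q-1}{2}}(\,\cdot\,)^{\frac{q-1}{2}}$ and applying the Cauchy--Schwarz inequality on $\Omega_T$,
\[
\iint_{\Omega_T}|\partial_t v_h|\,dxdt\le C(q)\Bigg(\iint_{\Omega_T}\big(|\bar{u}_h(x,t)|+|\bar{u}_h(x,t-h)|\big)^{q-1}dxdt\Bigg)^{\!1/2}\Bigg(\iint_{\Omega_T}\big(|\bar{u}_h(x,t)|+|\bar{u}_h(x,t-h)|\big)^{q-1}|\partial_t u_h|^2\,dxdt\Bigg)^{\!1/2}.
\]
The second factor is bounded by $C[u_0]_{W^{s,p}(\bR^n)}^{p/2}$ by~\eqref{energy est. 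NE3}. For the first, since $q\ge1$ the exponent $q-1$ is nonnegative, so $(a+b)^{q-1}\le C(q)(a^{q-1}+b^{q-1})$; then Hölder's inequality in $x$ with conjugate exponents $\tfrac{q+1}{q-1},\tfrac{q+1}{2}$ (trivial when $q=1$) together with the uniform bounds $\int_\Omega|\bar{u}_h(t)|^{q+1}dx,\ \int_\Omega|\bar{u}_h(t-h)|^{q+1}dx\le\int_\Omega|u_0|^{q+1}dx$ from~\eqref{energy est. NE1} produces the factor $|\Omega_T|^{\frac{1}{q+1}}\|u_0\|_{L^{q+1}(\Omega)}^{\frac{q-1}{2}}$, up to a constant depending only on $q$ (and, when $q>1$, on a harmless power of the fixed $T$). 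Combining the two factors gives~\eqref{(ii)}.

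The work is essentially bookkeeping, and the one point requiring care is choosing exactly the exponent $\alpha$ in Lemma~\ref{Algs} for which the controlled quantity $(|u_m|+|u_{m-1}|)^{q-1}|u_m-u_{m-1}|^2$ of~\eqref{energy est. NE3} reappears after squaring. The genuine restriction occurs in~\eqref{(ii)}: one needs $q\ge1$ so that the accumulated weight $(|\bar{u}_h|+|\bar{u}_h(\,\cdot\,,\,\cdot-h)|)^{q-1}$ is integrable by the $L^\infty(0,T;L^{q+1})$ bound; for $q<1$ this weight carries a negative power, concentrated where $u_m$ and $u_{m-1}$ vanish, and cannot be controlled by the available estimates — which is precisely why~\eqref{(ii)} is confined to $q\ge1$ while~\eqref{(i)} holds for all $q>0$.
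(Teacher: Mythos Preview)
Your proposal is correct and follows essentially the same route as the paper's own proof: for~\eqref{(i)} the paper also applies~\eqref{Alg1} with $\alpha=\tfrac{q+3}{2}$ and then invokes~\eqref{energy est. NE3}; for~\eqref{(ii)} the paper likewise bounds $|\partial_t v_h|$ pointwise by the weighted quantity (implicitly via~\eqref{Alg1} with $\alpha=q+1$), then applies H\"older with exponents $(2,2)$ followed by $\bigl(\tfrac{q+1}{q-1},\tfrac{q+1}{2}\bigr)$ and the energy bounds~\eqref{energy est. NE1} and~\eqref{energy est. NE3}. Your parenthetical remark about a residual power of $T$ is accurate; the paper's display has the same hidden $T^{\frac{q-1}{2(q+1)}}$ factor when passing from $\bigl(\iint_{\Omega_T}(\cdot)^{q+1}\bigr)^{\frac{q-1}{2(q+1)}}$ to $\|u_0\|_{L^{q+1}(\Omega)}^{\frac{q-1}{2}}$.
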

\begin{proof}
Throughout the proof, we choose an integer $N$ satisfying $t_{N-1} <T \leq t_N$. We now begin the proof of~\eqref{(i)}. 
%
By~\eqref{Alg1} in Lemma~\ref{Algs} with $\alpha = \frac{q+3}{2}$
\begin{equation}\label{wh eq.1}
\left||u_m|^{\frac{q-1}{2}}u_m-|u_{m-1}|^{\frac{q-1}{2}}u_{m-1} \right|^2
\leq C \big(|u_m| + |u_{m - 1}|\big)^{q - 1} |u_m - u_{m - 1}|^2.
\end{equation}
From~\eqref{wh eq.1} and~\eqref{energy est. NE3} in Lemma~\ref{energy est. NE} it follows that, for any $N=1,2,\ldots$
\begin{align*}
\sum_{m=1}^N h \int_\Omega |\partial_tw_h|^2\,dx
&=\sum_{m=1}^Nh \int_\Omega \left|\frac{|u_m|^{\frac{q-1}{2}}u_m-|u_{m-1}|^{\frac{q-1}{2}}u_{m-1}}{h} \right|^2\,dx \\[2mm]
&\!\!\!\stackrel{\eqref{wh eq.1}}{\leq} C\sum_{m=1}^N h\int_\Omega \big(|u_m| + |u_{m - 1}|\big)^{q - 1} \left|\frac{u_m-u_{m-1}}{h} \right|^2\,dx \\[2mm]
&=C\int_0^{t_N}\int_\Omega \big(|\bar{u}_h (x,t)|
 + |\bar {u}_h (x,t-h)|\big)^{q - 1} |\partial_tu_h(x,t)|^2\,dxdt \\[2mm]
&\!\!\stackrel{\eqref{energy est. NE3}}{\leq} C[u_0]_{W^{s,p}(\bR^n)}^p,
\end{align*}
which implies~\eqref{(i)}. 

We shall show the estimation~\eqref{(ii)}. Suppose that $q \geq 1$.
We use H\"{o}lder's inequality twice with pair of exponents
$\left(2, 2\right)$ and $\left(\frac{q+1}{q-1},\,\frac{q+1}{2}\right)$, the energy inequalities~\eqref{energy est. NE1} and~\eqref{energy est. NE3} in Lemma~\ref{energy est. NE} to have 
\begin{align*}
\|\partial_tv_h\|_{L^1(\Omega_T)} &\leq \iint_{\Omega_T} \Big(|\bar{u}_h(t)|+|\bar{u}_h(t-h)|\Big)^{q-1}|\partial_t u_h|\,dxdt \notag\\[3mm]
&\leq \left(\iint_{\Omega_T}\Big(|\bar{u}_h(t)|+|\bar{u}_h(t-h)|\Big)^{q-1}\,dxdt \right)^{\frac{1}{2}} \\
&\quad \quad \quad \quad \quad  \times \left(\iint_{\Omega_T}\Big(|\bar{u}_h(t)|+|\bar{u}_h(t-h)|\Big)^{q-1}|\partial_t u_h|^2\,dxdt \right)^{\frac{1}{2}} \notag\\[3mm]
&\leq |\Omega_T|^{\frac{1}{q+1}}\left(\iint_{\Omega_T}\Big(|\bar{u}_h(t)|+|\bar{u}_h(t-h)|\Big)^{q+1}\,dxdt \right)^{\frac{q-1}{2(q+1)}}\\
&\quad \quad \quad \quad \quad  \times \left(\iint_{\Omega_T}\Big(|\bar{u}_h(t)|+|\bar{u}_h(t-h)|\Big)^{q-1}|\partial_t u_h|^2\,dxdt \right)^{\frac{1}{2}} \notag \\[3mm]
&\!\!\!\!\!\!\!\!\stackrel{\eqref{energy est. NE1},~\eqref{energy est. NE3}}{\leq} C(p,q)|\Omega_T|^{\frac{1}{q+1}}\|u_0\|_{L^{q+1}(\Omega)}^{\frac{q-1}{2}}[u_0]_{W^{s,p}(\bR^n)}^{\frac{p}{2}},
\end{align*}
%
which finishes the proof of~\eqref{(ii)}.
\end{proof}

We conclude this section by deriving the integral estimate for $\bar{u}_h$ and $u_h$, used later in Section~\ref{Sect. 4}.
\begin{lem}\label{energyum}
Let $\bar{u}_h$ and $u_h$ be the approximate solutions to~\eqref{maineq} defined by~\eqref{approx. sol. for maineq.1} and~\eqref{approx. sol. for maineq.2}. Then, for any bounded domain $K$ in $\bR^n$ and any positive $T < \infty$
\begin{equation}\label{energyumeq.0}
\begin{split}
\|\bar{u}_h\|_{L^p (K_T)}^p
&\leq C (\mathrm{diam} \,K)^{sp} \int_0^T [\bar{u}_h(t)]_{W^{s, p}(\bR^n)}^p d t
\leq C(\mathrm{diam} \,K)^{sp}  \|u_0\|_{L^{q + 1} (\Omega)}^{q +1};\\[2mm] 
\|u_h\|_{L^p (K_T)}^p
&\leq
C (\mathrm{diam} \,K)^{sp} \left(\|u_0\|_{L^{q + 1} (\Omega)}^{q +1} +h[u_0]_{W^{s,p} (\bR^n)}^p
\right)
\end{split}
\end{equation}
and
\begin{equation}\label{energyumeq.1}
\begin{split}
\sup_{0 < t < T}\|\bar{u}_h (t)\|_{L^{p_s^\star} (K)}&\leq \widetilde{C}[u_0]_{W^{s,p}(\bR^n)}; \\[2mm]
\sup_{0 < t < T} \|u_h (t)\|_{L^{p_s^\star} (K)} &\leq \widetilde{C}[u_0]_{W^{s,p}(\bR^n)},
\end{split}
\end{equation}
where the $C = C (n, p, q)$  and $\tilde{C}=\tilde{C}(n,s,p)$ are positive constants.
\end{lem}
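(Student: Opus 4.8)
The plan is to deduce all four estimates by pairing two slicewise functional inequalities with the energy bounds already proved in Lemmata~\ref{energy est. NE} and~\ref{energy est. NE'}. For the $L^p(K_T)$-estimates I would apply the fractional Poincar\'{e} inequality (Lemma~\ref{t.Poincare}) at each fixed time and then integrate in $t$; for the $L^{p_s^\star}(K)$-estimates I would apply the fractional Sobolev embedding~\eqref{embedRn} at each fixed time and then take the supremum in $t$. In both cases the time variable is handled trivially.

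For the first block, fix $t\in(0,T)$. Since every $u_m$ lies in $W^{s,p}_0(\Omega)$, the step function $\bar{u}_h(t)$ coincides with some $u_m$, while the interpolant $u_h(t)$ is a convex combination of $u_{m-1}$ and $u_m$; hence both $\bar{u}_h(t)$ and $u_h(t)$ belong to $W^{s,p}_0(\Omega)$, and in particular to $W^{s,p}_0(K)$ whenever $K\supseteq\Omega$ (and their $L^p(K)$-norms are in any case dominated by the corresponding $L^p(\Omega)$-norms). Lemma~\ref{t.Poincare} then gives $\|\bar{u}_h(t)\|_{L^p(K)}^p\le C(\diam K)^{sp}[\bar{u}_h(t)]_{W^{s,p}(\bR^n)}^p$, and likewise for $u_h(t)$. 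Integrating over $(0,T)$ yields
\begin{align*}
\|\bar{u}_h\|_{L^p(K_T)}^p&\le C(\diam K)^{sp}\int_0^T[\bar{u}_h(t)]_{W^{s,p}(\bR^n)}^p\,dt,\\
\|u_h\|_{L^p(K_T)}^p&\le C(\diam K)^{sp}\int_0^T[u_h(t)]_{W^{s,p}(\bR^n)}^p\,dt,
\end{align*}
and inserting the energy estimate~\eqref{energy est. NE2} into the first and~\eqref{energy est. NE2'} into the second --- absorbing the harmless factors $\tfrac{2q}{q+1}$, resp.\ $2^{p-1}\tfrac{4q}{q+1}$, into $C$ --- produces exactly the two lines of~\eqref{energyumeq.0}.

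For the second block, fix $t$ again. Since $\bar{u}_h(t),u_h(t)\in W^{s,p}(\bR^n)$ and $sp<n$ (so that $p_s^\star$ is defined), the embedding~\eqref{embedRn} gives $\|\bar{u}_h(t)\|_{L^{p_s^\star}(K)}\le\|\bar{u}_h(t)\|_{L^{p_s^\star}(\bR^n)}\le C(n,s,p)[\bar{u}_h(t)]_{W^{s,p}(\bR^n)}$, and the same for $u_h(t)$. Taking the supremum over $t\in(0,T)$ and applying the time-uniform seminorm bound~\eqref{energy est. NE4} for $\bar{u}_h$ and~\eqref{energy est. NE3'} for $u_h$ --- both of the form $\sup_{0<t<T}[\,\cdot\,(t)]_{W^{s,p}(\bR^n)}\le[u_0]_{W^{s,p}(\bR^n)}$ --- yields~\eqref{energyumeq.1} with $\widetilde{C}=\widetilde{C}(n,s,p)$.

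I do not expect any genuine obstacle here: every ingredient is already in hand, and the proof amounts to matching each target inequality with the right energy bound and treating $u_h$ as a convex combination. The only points meriting a word of care are that $\bar{u}_h(t)$ and $u_h(t)$ really do lie in $W^{s,p}_0(\Omega)$ for a.e.\ $t$ (so that Poincar\'{e} applies and the boundary value is respected), and that passing from an $L^p$- or $L^{p_s^\star}$-norm over $\bR^n$ (or $\Omega$) to one over $K$ can only decrease it, which is what allows the statements to be phrased over the auxiliary domain $K$.
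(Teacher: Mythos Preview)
Your proof is correct and follows essentially the same route as the paper's: apply the fractional Poincar\'{e} inequality~\eqref{e.Poincare} slicewise and integrate in time, combining with~\eqref{energy est. NE2} and~\eqref{energy est. NE2'} to obtain~\eqref{energyumeq.0}; then apply the Sobolev embedding~\eqref{embedRn} slicewise and combine with~\eqref{energy est. NE4} and~\eqref{energy est. NE3'} to obtain~\eqref{energyumeq.1}. The paper's proof is nearly identical, only phrasing the $u_h$ case for~\eqref{energyumeq.1} via the convexity relation~\eqref{1} in Lemma~\ref{elementary est.} rather than citing~\eqref{energy est. NE3'} directly, which amounts to the same thing.
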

\begin{proof}
From~\eqref{e.Poincare} in Lemma~\ref{t.Poincare} and~\eqref{energy est. NE2} in Lemma~\ref{energy est. NE} we obtain \eqref{energyumeq.0}$_1$.
Similarly, ~\eqref{energy est. NE2'} in Lemma~\ref{energy est. NE'} yields~\eqref{energyumeq.0}$_2$. By~\eqref{embedRn} in Lemma~\ref{Sobolev embedding} and~\eqref{energy est. NE4} in Lemma~\ref{energy est. NE} one has, for any $t>0$
\begin{align*}
\|\bar{u}_h(t)\|_{L^{p_s^\star}(K)} &\!\!\!\stackrel{\eqref{embedRn}}{\leq}\widetilde{C}(n,s,p) [\bar{u}_h(t)]_{W^{s,p}(\bR^n)} \notag \\[2mm]
&\!\!\!\stackrel{\eqref{energy est. NE4}}{\leq}\widetilde{C}[u_0]_{W^{s,p}(\bR^n)},
\end{align*}
which together with~\eqref{1} in Lemma~\ref{elementary est.} shows~\eqref{energyumeq.1}$_1$. The estimate~\eqref{energyumeq.1}$_2$ is shown similarly.
\end{proof}
\section{Convergence of approximate solutions}\label{Sect. 4}
This section is devoted to deriving the convergence of approximate solution
of~\eqref{maineq}
defined by~\eqref{approx. sol. for maineq.1}
and~\eqref{approx. sol. for maineq.2}. The initial value $u_0$ is assumed
to belong to $W_0^{s,p}(\Omega) \cap L^{q+1}(\Omega)$.

To achieve the convergence of approximate solution
of~\eqref{maineq}, as stated in Lemma~\ref{convergence result} below, we need to introduce the following truncated function: For the approximate solution $\{u_m(x)\}$ to~\eqref{NE} and $2\leq \ell \in \mathbb{N}$, let us define as

\begin{equation*}
(u_m(x))_\pm^{(\ell)}:=\min\left\{\max \left\{(u_m(x))_\pm,\,\dfrac{1}{\ell}\right\},\,\ell \right\},
\end{equation*}
where $(u_m(x))_+:=\max\{u_m(x),0\}$ and $(u_m(x))_-:=(-u_m(x))_+$.
\begin{figure}[h]
\begin{tikzpicture}[domain=-3:2, samples=70, very thick,scale=1.1]
\draw[thin, ->] (-2.5,0)--(4.5,0) node[right] {$u_m$}; 
\draw[thin, ->] (0,-1.2)--(0,1.8) node[above] {\footnotesize$y$}; 
\draw [domain=0.2:1.5]plot(\x, {1*\x}); 
\draw [thin, domain=0:2]plot(\x, {1*\x}); 
\draw (4.5,1.2)node[above] {\footnotesize $y=(u_m)_+^{(\ell)}$};
\draw (-0.3,0.2)node[above]{\footnotesize $1/\ell$};
\filldraw[very thick](-2.0, 0.2)--(0.2,0.2);
\filldraw[very thick](1.5, 1.5)--(3.5,1.5);
\draw[densely dotted] (0,1.5) -- (1.5,1.5);
\draw (-0.3,1.5)node {\footnotesize$\ell$};
\end{tikzpicture}
\caption{Graph of $(u_m(x))_+^{(\ell)}$}
\end{figure}
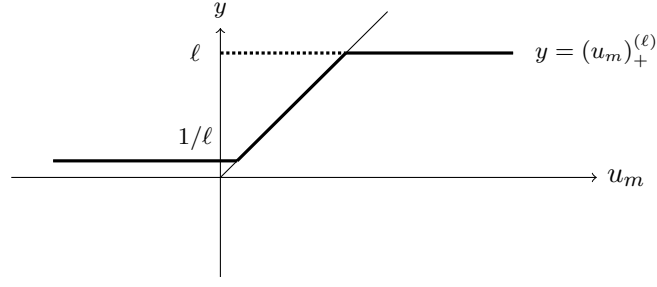
\noindent
Further, for $(x,t) \in \bR^n \times (t_{m-1},t_m] \,\,(m = 1, 2, \cdots)$, we define $\{(u_h)_{\pm}^{(\ell)}\}$ and $\{(\bar{u}_h)_{\pm}^{(\ell)}\}$ by
\begin{equation}\label{truncate}
(u_h)_{\pm}^{(\ell)}(x,t):=\dfrac{t-t_{m-1}}{h}(u_m(x))_{\pm}^{(\ell)}+\dfrac{t_m-t}{h}(u_{m-1}(x))_{\pm}^{(\ell)}
\end{equation}
and
\[
(\bar{u}_h)_\pm^{(\ell)}(x,t):=(u_m(x))_\pm^{(\ell)},
\]
respectively. By $u_m=u_{m-1}=0$ outside $\Omega\,\,(m = 1, 2, \cdots)$,  for any $t \in [0, \infty)$,
\begin{equation}\label{truncate*}
(u_h)_{\pm}^{(\ell)}(x,t)\equiv \dfrac{1}{\ell} \quad \mathrm{outside}\,\,\Omega.
\end{equation}
Sending $\ell \to \infty$, for $(x, t) \in \bR^n \times [t_{m-1}, t_m]\,\,(m = 1, 2, \ldots)$,
\[
(u_h)_{\pm}^{(\ell)}(x,t) \to \dfrac{t-t_{m-1}}{h}(u_m(x))_{\pm}+\dfrac{t_m-t}{h}(u_{m-1}(x))_{\pm}.
\]
Also, the time-derivative of $(u_h)_{\pm}^{(\ell)}$ is
\[
\partial_t(u_h)_{\pm}^{(\ell)}=\dfrac{(u_m(x))_{\pm}^{(\ell)}-(u_{m-1}(x))_{\pm}^{(\ell)}}{h}
\]
for $(x, t) \in \bR^n \times [t_{m-1}, t_m]\,\,(m = 1, 2, \ldots)$.
\subsection{Energy estimates for the truncated solution}
First we deduce certain energy estimate of the truncated solution defined in~\eqref{truncate} above.
\begin{lem}\label{truncate energy}
Let $(u_h)_\pm^{(\ell)}$ be the function defined by~\eqref{truncate}. Then there holds 
\begin{equation}\label{truncate energy1}
\iint_{\Omega_T} |\partial_t (u_h)_\pm^{(\ell)}|^2\,dxdt \leq C\ell^{q-1}[u_0]_{W^{s,p}(\bR^n)}^{p}
\end{equation}
whenever $q \geq 1$ and,
\begin{equation}\label{truncate energy2}
\min \left\{\iint_{\Omega_T} |\partial_t (u_h)_\pm^{(\ell)}|^2\,dxdt, \,\iint_{\Omega_T} |\partial_t (u_h)_\pm^{(\ell)}|^{q+1}\,dxdt \right\} \leq C\ell^{1-q}[u_0]_{W^{s,p}(\bR^n)}^{p}
\end{equation}
whenever $0<q<1$, where the positive constant $C$ depends only on $p$ and $q$.
\end{lem}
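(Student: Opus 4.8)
The plan is to test the weak formulation \eqref{weak form NE} of the discrete equation with a suitable multiple of the truncated increment and then exploit the monotonicity inequality \eqref{Algpre} to absorb the fractional $p$-Laplacian term into the energy bound \eqref{energy est. NE4}. Concretely, for each $m$ I would choose the test function $\xi = (u_m)_\pm^{(\ell)} - (u_{m-1})_\pm^{(\ell)}$, which lies in $W_0^{s,p}(\Omega)\cap L^{q+1}(\Omega)$ since $(u_m)_\pm^{(\ell)} - 1/\ell$ is admissible (it vanishes outside $\Omega$ by \eqref{truncate*}), and sum the resulting identities from $m=1$ to $N$ with $t_{N-1} < T \le t_N$. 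The key algebraic point is a chain-rule-type inequality comparing the time increment of $|u_m|^{q-1}u_m$ paired against the time increment of the truncation $(u_m)_\pm^{(\ell)}$: since $t \mapsto (t)_\pm^{(\ell)}$ and $t \mapsto |t|^{q-1}t$ are both monotone (in the same sense on each sign branch), one gets
\[
\bigl(|u_m|^{q-1}u_m - |u_{m-1}|^{q-1}u_{m-1}\bigr)\bigl((u_m)_\pm^{(\ell)} - (u_{m-1})_\pm^{(\ell)}\bigr) \ge c\,\bigl|(u_m)_\pm^{(\ell)} - (u_{m-1})_\pm^{(\ell)}\bigr|^2 \cdot \omega_m^{(\ell)},
\]
where $\omega_m^{(\ell)}$ is a weight controlled by the size of the truncation levels, of order $\ell^{q-1}$ when $q\ge 1$ (because on the active region $|u_m|,|u_{m-1}| \le \ell$ and the modulus $(|u_m|+|u_{m-1}|)^{q-1}$ is bounded by $(2\ell)^{q-1}$) and of order $\ell^{1-q}$ when $0<q<1$ (where instead $|u_m|,|u_{m-1}| \ge 1/\ell$ on the region where the truncation is not frozen, so $(|u_m|+|u_{m-1}|)^{q-1} \ge (2\ell)^{-(q-1)} = \ell^{1-q}/2^{1-q}$). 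This is where the two different scalings in \eqref{truncate energy1} and \eqref{truncate energy2} originate.

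The main work is thus to justify that comparison carefully on each of the three regimes of $(u_m,u_{m-1})$: both arguments in the "linear part'' $[1/\ell,\ell]$ of the truncation, both outside it (increment zero, nothing to prove), or one inside and one outside. In the mixed case one needs the monotone rearrangement estimate $|(a)_\pm^{(\ell)} - (b)_\pm^{(\ell)}| \le |(a)_\pm - (b)_\pm| \le |a-b|$ together with a lower bound on the product that survives because $|u_m|^{q-1}u_m - |u_{m-1}|^{q-1}u_{m-1}$ and $(u_m)_\pm^{(\ell)} - (u_{m-1})_\pm^{(\ell)}$ carry the same sign. For the fractional term, I would use that $(\cdot)_\pm^{(\ell)}$ is a $1$-Lipschitz monotone function of $u_m$, so that inserting $\xi$ above and applying \eqref{Algpre} (or just monotonicity of $(-\Delta)_p^s$ against increasing Lipschitz rearrangements) yields
\[
\iint_{\bR^n\times\bR^n}\frac{U_m(x,y)\bigl[\xi(x)-\xi(y)\bigr]}{|x-y|^{n+sp}}\,dxdy \ge 0,
\]
or more precisely can be bounded below in a telescoping way by $\tfrac1p[u_k]^p - \tfrac1p[u_0]^p$ as in \eqref{appenergyeq6}; in any case the fractional contribution, after summation and dropping the nonnegative telescoped remainder, is controlled by $C[u_0]_{W^{s,p}(\bR^n)}^p$ via \eqref{energy est. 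NE4}.

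Putting these together, the summed identity gives
\[
c\sum_{m=1}^N h\int_\Omega \omega_m^{(\ell)}\Bigl|\frac{(u_m)_\pm^{(\ell)}-(u_{m-1})_\pm^{(\ell)}}{h}\Bigr|^2 dx \le C[u_0]_{W^{s,p}(\bR^n)}^p,
\]
and recognizing the left side as $c\iint_{\Omega_T}\omega_h^{(\ell)}|\partial_t(u_h)_\pm^{(\ell)}|^2\,dxdt$ with $\omega_h^{(\ell)} \gtrsim \ell^{-(q-1)}$ for $q\ge1$ gives \eqref{truncate energy1}. For $0<q<1$ one splits according to whether $|\partial_t(u_h)_\pm^{(\ell)}|$ is controlled in $L^2$ or only in $L^{q+1}$: on the set where the increment is "large'' the weight $\omega_h^{(\ell)}\sim\ell^{1-q}$ forces the $L^2$ bound, while on the complementary set an interpolation using $|\partial_t(u_h)_\pm^{(\ell)}| \le |\partial_t u_h|$ plus \eqref{energy est. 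NE3} gives the $L^{q+1}$ bound, and taking the minimum yields \eqref{truncate energy2}. The delicate step is the sign/monotonicity bookkeeping in the mixed region of the truncation together with correctly tracking the $\ell$-power of the weight — everything else is a rerun of the energy computation in Lemma~\ref{energy est. NE}.
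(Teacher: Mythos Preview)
Your route is genuinely different from the paper's, and it has two real gaps.

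The paper never re-tests the equation. It starts from the already-proved weighted estimate~\eqref{energy est. NE3},
\[
\sum_{m=1}^N h\int_\Omega \bigl(|u_m|+|u_{m-1}|\bigr)^{q-1}\Bigl|\tfrac{u_m-u_{m-1}}{h}\Bigr|^2\,dx \le C(p,q)\,[u_0]_{W^{s,p}(\bR^n)}^p,
\]
and then establishes a purely \emph{pointwise} comparison between the integrand above and $|\partial_t(u_h)_+^{(\ell)}|^2$ (resp.\ $|\partial_t(u_h)_+^{(\ell)}|^{q+1}$). No fractional term ever appears, which is why the constant is automatically independent of $T$.

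In your scheme the fractional term does not behave as you assert. With $\xi=(u_m)_+^{(\ell)}-(u_{m-1})_+^{(\ell)}$, the piece
$U_m(x,y)\bigl[(u_{m-1})_+^{(\ell)}(x)-(u_{m-1})_+^{(\ell)}(y)\bigr]$
has no sign and cannot be paired with a $U_{m-1}$-term, so neither the claim ``$\ge 0$'' nor the telescoping analogue of~\eqref{appenergyeq6} holds. The best one obtains via Young's inequality is a lower bound costing $\sum_m h\,[u_m]^p$, which is controlled by~\eqref{energy est. NE2} or~\eqref{energy est. NE4} only at the price of either a $T$-dependent constant or the wrong initial-data norm $\|u_0\|_{L^{q+1}}^{q+1}$; in either case you do not recover the stated lemma.

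More seriously, the weight analysis for $0<q<1$ is incorrect. Since $q-1<0$, a \emph{lower} bound on $(|u_m|+|u_{m-1}|)^{q-1}$ requires an \emph{upper} bound on $|u_m|+|u_{m-1}|$, and on the active set there is none: if $u_m\gg\ell$ while $u_{m-1}\in[1/\ell,\ell]$ the truncated increment is nonzero but $(|u_m|+|u_{m-1}|)^{q-1}$ is arbitrarily small. This is precisely why the paper carries out a four-case analysis to prove
\[
|u_m|+|u_{m-1}|\;\le\;3\max\bigl\{\ell,\;|u_m-u_{m-1}|\bigr\}\quad\text{on the active set,}
\]
which in turn gives the pointwise dichotomy
\[
\bigl(|u_m|+|u_{m-1}|\bigr)^{q-1}\Bigl|\tfrac{u_m-u_{m-1}}{h}\Bigr|^2
\;\ge\;3^{q-1}\min\Bigl\{\ell^{q-1}\bigl|\partial_t(u_h)_+^{(\ell)}\bigr|^2,\;\bigl|\partial_t(u_h)_+^{(\ell)}\bigr|^{q+1}\Bigr\}.
\]
Integrating this and using $\min\{A,B\}\le$ the integral of the pointwise minimum yields~\eqref{truncate energy2}; your sketch of ``splitting by size of the increment and taking the minimum'' does not produce this structure. (For $q\ge1$ your final claim $\omega_h^{(\ell)}\gtrsim \ell^{1-q}$ is correct, but the justification you give---via the upper bound $|u_m|+|u_{m-1}|\le 2\ell$---goes the wrong way; the right reason is that at least one of $(u_m)_+,(u_{m-1})_+$ exceeds $1/\ell$ on the active set.)
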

\begin{proof}
The proof is based on a sequence of elementary algebraic integral estimates, those are separately performed in several cases. We prove the statement for $\{(u_h)_+^{(\ell)}\}$ only, as the other case is similarly treated. 
To begin, we employ the energy estimate~\eqref{appenergyeq7} in the proof of Lemma~\ref{energy est. NE}
\begin{equation}\label{truncate energy eq.1}
\sum \limits_{m=1}^N h\int_\Omega \big(|u_m| + |u_{m - 1}|\big)^{q - 1}\bigg|\frac{u_m-u_{m-1}}{h} \bigg|^2 \,dx \leq  C(p,q) [u_0]_{W^{s,p}(\bR^n)}^p.
\end{equation}
We use a short-hand notation $[u_m  \geq L] \equiv \Omega \cap \{u_m \geq  L\}$ in the sequel. Other symbols are also abbreviated similarly. Now, we distinguish between the cases $q \geq 1$ and $0 <q<1$. 

In the first case $q \geq 1$, for any $m=1,\ldots ,N$, 
\begin{align}\label{truncate energy eq.2}
\int_\Omega \big(|u_m| + |u_{m - 1}|\big)^{q - 1}\bigg|\frac{u_m-u_{m-1}}{h} \bigg|^2 \,dx &\geq \int_\Omega \big(|u_m| + |u_{m - 1}|\big)^{q - 1}\bigg|\frac{(u_m)_+^{(\ell)}-(u_{m-1})_+^{(\ell)}}{h} \bigg|^2 \,dx \notag\\[3mm]
& =\int_{[u_m >\frac{1}{\ell}]\,\cup\,[u_{m-1}>\frac{1}{\ell}]}\ldots\,dx+\int_{[u_m \leq \frac{1}{\ell}]\,\cap\,[u_{m-1} \leq \frac{1}{\ell}]}\ldots\,dx \notag\\[3mm]
&\geq \int_{[u_m >\frac{1}{\ell}]\,\cup\,[u_{m-1}>\frac{1}{\ell}]}\left(\frac{1}{\ell}\right)^{q-1}\bigg|\frac{(u_m)_+^{(\ell)}-(u_{m-1})_+^{(\ell)}}{h} \bigg|^2 \notag\\[3mm]
&=\int_{[u_m >\frac{1}{\ell}]\,\cup\,[u_{m-1}>\frac{1}{\ell}]}\left(\frac{1}{\ell}\right)^{q-1}\bigg|\frac{(u_m)_+^{(\ell)}-(u_{m-1})_+^{(\ell)}}{h} \bigg|^2\,dx \notag \\[3mm]
&\quad \quad+\int_{[u_m \leq \frac{1}{\ell}]\,\cap\,[u_{m-1} \leq \frac{1}{\ell}]}\left(\frac{1}{\ell}\right)^{q-1}\bigg|\frac{(u_m)_+^{(\ell)}-(u_{m-1})_+^{(\ell)}}{h} \bigg|^2\,dx\notag\\[3mm]
&=\left(\frac{1}{\ell}\right)^{q-1}\int_\Omega \bigg|\frac{(u_m)_+^{(\ell)}-(u_{m-1})_+^{(\ell)}}{h} \bigg|^2\,dx,
\end{align}
where, in the previous line of~\eqref{truncate energy eq.2}, we used the fact that $\frac{(u_m)_+^{(\ell)}-(u_{m-1})_+^{(\ell)}}{h}=0$ on the set $[u_m \leq \frac{1}{\ell}]\,\cap\,[u_{m-1} \leq \frac{1}{\ell}]$. Hence, by~\eqref{truncate energy eq.1} and~\eqref{truncate energy eq.2}, we arrive at the first desired estimate~\eqref{truncate energy1}.

In the latter case $0<q<1$, we need to show
\begin{equation}\label{truncate energy eq.3}
 \big(|u_m| + |u_{m - 1}|\big)^{q - 1}\Big|\tfrac{u_m-u_{m-1}}{h} \Big|^2 \geq 3^{q-1}\min \left\{ \ell^{q-1}\Big|\tfrac{(u_m)_+^{(\ell)}-(u_{m-1})_+^{(\ell)}}{h} \Big|^2,\,\Big|\tfrac{(u_m)_+^{(\ell)}-(u_{m-1})_+^{(\ell)}}{h} \Big|^{q+1} \right\}.
\end{equation}
For the proof of~\eqref{truncate energy eq.3}, a distinction must be made among the cases in the following: Firstly, on the set $\Omega_1:=[u_m \leq \frac{1}{\ell}] \cap [u_{m-1} \leq \frac{1}{\ell}]$ or $\Omega_2:=[u_m> \ell] \cap [u_{m-1} >\ell]$, it holds $\frac{(u_m)_+^{(\ell)}-(u_{m-1})_+^{(\ell)}}{h} =0$ and thus,~\eqref{truncate energy eq.3} is clearly valid. On the set $\Omega \setminus (\Omega_1 \cup \Omega_2)$, we distinguish among the four cases (I)--(IV).
\smallskip

\noindent
\textbf{Case (I)}\,\,$[u_m \leq \frac{1}{\ell}] \cap [\frac{1}{\ell} <u_{m-1} \leq \ell]$: In this region, we further consider the two cases separately: In the case $u_m>0$, noting carefully that $u_m-u_{m-1}<0$, we have
\begin{align*}
|u_m|+|u_{m-1}|=u_m+u_{m-1}&=(u_m-u_{m-1})+2u_{m-1} \\
 &\leq 2u_{m-1} \leq 2\ell <3\ell.
\end{align*}
In the remaining case $u_m \leq 0$, we plainly get
\begin{equation*}
|u_m|+|u_{m-1}|=-u_m+u_{m-1} \leq |u_m-u_{m-1}|.
\end{equation*}
\smallskip

\noindent
\textbf{Case (II)}\,\, $[u_m \leq \frac{1}{\ell}] \cap [u_{m-1}>\ell]$: Also, we distinguish between the two cases $u_m>0$ and $u_m \leq 0$. In the first case $u_m>0$, noting carefully that $u_{m-1}-u_m \geq \ell-\frac{1}{\ell} \geq 2-\frac{1}{2} \geq \frac{1}{\ell}$, we find
\begin{align*}
|u_m|+|u_{m-1}|=(u_{m-1}-u_m)+2u_m &\leq (u_{m-1}-u_m)+\frac{2}{\ell} \\
&\leq (u_{m-1}-u_m)+2(u_{m-1}-u_m) \\
&=3(u_{m-1}-u_m) \\
&\leq 3|u_m-u_{m-1}|.
\end{align*}
In the latter case $u_m \leq 0$, we have again,
\begin{equation*}
|u_m|+|u_{m-1}|=-u_m+u_{m-1} \leq |u_m-u_{m-1}|.
\end{equation*}
\smallskip

\noindent
\textbf{Case (III)}\,\, $[\frac{1}{\ell}<u_m \leq \ell] \cap [\frac{1}{\ell}<u_{m-1}\leq\ell]$: On this set, it clearly holds that
\[
|u_m|+|u_{m-1}|=u_m+u_{m-1} \leq 2\ell.
\]
\smallskip

\noindent
\textbf{Case (IV)}\,\, $[\frac{1}{\ell}<u_m \leq \ell] \cap [u_{m-1}>\ell]$: In this region, we further distinguish between the two cases $u_{m-1}-u_m \leq \ell$ and $u_{m-1}-u_m >\ell$. In the first case $u_{m-1}-u_m \leq \ell$, we in turn see that
\begin{equation*}
|u_m|+|u_{m-1}|=(u_{m-1}-u_m)+2u_m \leq \ell+2\ell=3\ell.
\end{equation*}
In the latter case $u_{m-1}-u_m >\ell$, we have
\begin{align*}
|u_m|+|u_{m-1}|=(u_{m-1}-u_m)+2u_m &\leq (u_{m-1}-u_m)+2\ell \\
& \leq (u_{m-1}-u_m)+2(u_{m-1}-u_m) \\
&=3(u_{m-1}-u_m) \\
&=3|u_m-u_{m-1}|.
\end{align*}
By exchanging of the role of $u_m(x)$ and $u_{m-1}(x)$, we can get the estimates same as Cases (I), (II) and (IV) on the sets $[\frac{1}{\ell}<u_m\leq \ell] \cap [u_{m-1} \leq \frac{1}{\ell}]$,\,$[u_m>\ell] \cap [u_{m-1} \leq \frac{1}{\ell}]$ and $[u_m> \ell] \cap [\frac{1}{\ell}<u_{m-1} \leq \ell]$, respectively. Joining all cases leads to
\begin{equation*}
|u_m|+|u_{m-1}| \leq \max\left\{3\ell, \,3|u_m-u_{m-1}| \right\} \quad \mathrm{in} \quad \Omega \setminus (\Omega_1 \cup \Omega_2),
\end{equation*}
which, by noticing $q-1<0$, in turn implies that
\begin{equation}\label{truncate energy eq.4}
\big(|u_m|+|u_{m-1}|\big)^{q-1} \geq 3^{q-1}\min\left\{\ell^{q-1}, \,|u_m-u_{m-1}|^{q-1} \right\} \quad \mathrm{in} \quad \Omega \setminus (\Omega_1 \cup \Omega_2).
\end{equation}
Using $h^{q-1} \geq 1$ and~\eqref{truncate energy eq.4}, we obtain on the set $\Omega \setminus (\Omega_1 \cup \Omega_2)$, 
\begin{align*}
\big(|u_m| + |u_{m - 1}|\big)^{q - 1}\Big|\tfrac{u_m-u_{m-1}}{h} \Big|^2 &\geq 3^{q-1}\min\left\{\ell^{q-1}\left|\tfrac{u_m-u_{m-1}}{h}\right|^2, \,h^{q-1}\left|\tfrac{u_m-u_{m-1}}{h}\right|^{q+1} \right\} \notag\\[3mm]
&\geq 3^{q-1}\min \left\{ \ell^{q-1}\Big|\tfrac{(u_m)_+^{(\ell)}-(u_{m-1})_+^{(\ell)}}{h} \Big|^2,\,\Big|\tfrac{(u_m)_+^{(\ell)}-(u_{m-1})_+^{(\ell)}}{h} \Big|^{q+1} \right\}.
\end{align*}
In the sequel, we actually have~\eqref{truncate energy eq.3} in the whole region $\Omega$ and thus, by use of~\eqref{truncate energy eq.1} and~\eqref{truncate energy eq.3}, we in turn achieve~\eqref{truncate energy2}. The proof is concluded.
\end{proof}

\subsection{Convergence results for the truncated solution}\label{Sect. Convergence results for the truncated function}
We list here the convergence results, Lemmata~\ref{Cauchy} and~\ref{Cauchyuh} for the truncated solution in~\eqref{truncate}, used later in Lemma~\ref{convergence result}.
\begin{lem}\label{Cauchy}
Let $u_h$ be the approximate solutions of~\eqref{maineq} defined by~\eqref{approx. sol. for maineq.1}. For a given $s \in (0,1)$, let $s^\prime <s$ be a positive number. Let $\ell \geq 2$ be a fixed natural number. Let $K$ be a Lipschitz bounded domain in $\bR^n$. Then there exists a subsequence $\{(u_h)_\pm^{(\ell)}\}$, denoted by the same notation,  and limit functions $\omega_\ell^{\pm}$ such that for every power $\gamma$ with $1\leq \gamma < \frac{n+1}{n+1-s^\prime}$, 
\begin{equation}\label{lemma 4.3 eq.1'}
(u_h)_+^{(\ell)} \to \omega_\ell^{\pm}\quad \mathrm{strongly\,\,in}\quad L^\gamma(K_T).
\end{equation}
\end{lem}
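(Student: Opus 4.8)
The goal is a strong $L^\gamma(K_T)$ convergence for the truncated, linearly-interpolated approximants $(u_h)_\pm^{(\ell)}$, with $\ell$ fixed. The natural route is to apply the space-time fractional Sobolev compact embedding (Lemma~\ref{space-time Sobolev embedding}) with fractional exponent $s^\prime$ (chosen strictly less than $s$) on the cylinder $K_T$: indeed $\overline{(p')^\star_{s'}}$ for $p'=p$... but here the cleanest choice is exponent $p=1$ or rather a convenient exponent so that the Sobolev exponent $\overline{p^\star_{s'}}=\frac{(n+1)}{n+1-s'}$ appears; one checks that with the first-order integrability exponent chosen as $1$ the critical exponent is exactly $\frac{n+1}{n+1-s'}$, matching the range of $\gamma$ in the statement. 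So it suffices to show that $\{(u_h)_\pm^{(\ell)}\}_{h>0}$ is bounded in $W^{s',1}(K_T)$ (uniformly in $h$, for each fixed $\ell$), and then extract, by Lemma~\ref{space-time Sobolev embedding}, a subsequence converging strongly in $L^\gamma(K_T)$ for every $\gamma\in[1,\frac{n+1}{n+1-s'})$ to some limit $\omega_\ell^\pm$.

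\textbf{Key steps.} First I would bound the spatial part of the Gagliardo seminorm. Since $0<s'<s$, the truncation is a $1$-Lipschitz composition with $u_h(\cdot,t)$, so $[(u_h)_\pm^{(\ell)}(t)]_{W^{s,p}(\bR^n)}\leq [u_h(t)]_{W^{s,p}(\bR^n)}$, which is controlled uniformly in $h$ and $t$ by the energy estimate~\eqref{energy est. NE3'} in Lemma~\ref{energy est. NE'}; together with the fact that $(u_h)_\pm^{(\ell)}$ takes values in $[\frac1\ell,\ell]$ it is also bounded in $L^\infty$, hence in $L^p(K_T)$ on the bounded cylinder, so Lemma~\ref{monotone} (applied on the bounded set $K$) upgrades this to a uniform $W^{s',p}(K)$-bound in space, and then H\"older on $K$ (finite measure) gives the $W^{s',1}(K)$-bound in the spatial variables integrated in $t$. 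Second, the temporal part: I would estimate
\[
\iint_{K_T\times K_T}\frac{|(u_h)_\pm^{(\ell)}(x,t)-(u_h)_\pm^{(\ell)}(x,t')|}{|t-t'|^{1+s'}}\,dx\,dt\,dx'\,dt'
\]
— here only $t\neq t'$ matters — by splitting into $|t-t'|<h$ and $|t-t'|\geq h$ after using that in $t$ the interpolant is piecewise linear with slope $\partial_t(u_h)_\pm^{(\ell)}$. For $|t-t'|\geq h$ one uses the crude bound $|(u_h)_\pm^{(\ell)}|\leq\ell$ and integrability of $|t-t'|^{-1-s'}$ away from the diagonal (since $s'<1$); for $|t-t'|<h$ one writes the difference as an integral of $\partial_t(u_h)_\pm^{(\ell)}$ and invokes the truncated energy estimate~\eqref{truncate energy1}/\eqref{truncate energy2} of Lemma~\ref{truncate energy}, which gives $\partial_t(u_h)_\pm^{(\ell)}$ bounded (for fixed $\ell$) in $L^2(\Omega_T)$ when $q\geq1$ and in $L^2$ or $L^{q+1}$ when $0<q<1$; Jensen/H\"older then converts this into a uniform bound for the near-diagonal contribution. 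This yields a uniform $W^{s',1}(K_T)$-bound, and a diagonal subsequence argument over $\gamma\nearrow\frac{n+1}{n+1-s'}$ finishes the extraction.

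\textbf{Main obstacle.} The delicate point is the temporal seminorm estimate, specifically the near-diagonal regime $|t-t'|<h$: one must convert the discrete-in-time derivative bound (which carries the factor $\ell^{q-1}$ or $\ell^{1-q}$) into a genuine fractional-in-time modulus bound with $s'<1$, uniformly in $h$, without losing control as $h\searrow0$. The trick is that $\int_0^h r^{-s'}\,dr<\infty$ precisely because $s'<1$, so the singular kernel is integrable over the strip, and the $L^2$ (or $L^{q+1}$) bound on $\partial_t(u_h)_\pm^{(\ell)}$ survives the integration; one should also note the $h^{q-1}\geq1$ type factors already absorbed in Lemma~\ref{truncate energy}. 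A secondary annoyance is matching exactly the exponent bookkeeping so that the Sobolev critical exponent is the stated $\frac{n+1}{n+1-s'}$ rather than something larger — but this is routine once one fixes the first-order integrability exponent to be $1$ and uses that $K_T$ has finite measure and $s'p$ need not be compared to $n+1$ since we work with the $L^1$-based embedding (or, equivalently, one argues with a $W^{s',\tilde p}$-bound for a suitable $\tilde p>1$ and interpolates).
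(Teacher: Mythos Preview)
Your overall plan—obtain a uniform $W^{s',1}(K_T)$ bound and invoke the space-time compact embedding (Lemma~\ref{space-time Sobolev embedding})—is exactly the paper's route. The spatial estimate is fine; the paper argues slightly differently, bounding $[(u_h)_+^{(\ell)}(\cdot,t)]_{W^{\bar s,1}(K)}$ directly by H\"older on the kernel against $[(u_h)_+(\cdot,t)]_{W^{s,p}(\bR^n)}$ rather than via the $1$-Lipschitz property and Lemma~\ref{monotone}, but both work. (One small imprecision: $(u_h)_\pm^{(\ell)}$ is the \emph{interpolant of the truncated} $u_m$, not the truncation of $u_h$; the Lipschitz argument applies at the level of each $u_m$, and the interpolant's seminorm is then bounded by a convex combination.)

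There is, however, a genuine gap in your temporal estimate. In the regime $|t-t'|\geq h$ you propose the crude bound $|(u_h)_\pm^{(\ell)}|\leq\ell$ together with ``integrability of $|t-t'|^{-1-s'}$ away from the diagonal (since $s'<1$)''. This fails: the exponent $1+s'>1$ gives $\int_h^T r^{-1-s'}\,dr\sim h^{-s'}$, which blows up as $h\searrow0$; the condition $s'<1$ does not help here. What $s'<1$ actually buys is $\int_0^h r^{-s'}\,dr<\infty$, which you correctly invoke for the near-diagonal part.

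The fix is to drop the split altogether: the fundamental-theorem-of-calculus argument you sketch for $|t-t'|<h$ works for \emph{all} $t,t'$, since $(u_h)_\pm^{(\ell)}$ is piecewise linear, hence absolutely continuous, in $t$. Writing $|f(x,t)-f(x,t')|\leq|t-t'|\int_0^1|\partial_t f(x,t'+\sigma(t-t'))|\,d\sigma$ and applying Fubini yields a bound by $\|\partial_t f\|_{L^1(K_T)}$ times a constant depending only on $T$ and $s'$. This is precisely the content of the paper's space-time inequality, Lemma~\ref{FSineq.II} in Appendix~\ref{Appendix C}, which gives
\[
[f]_{W^{s',1}(K_T)}\leq C\Bigl(\|\partial_t f\|_{L^1(K_T)}+\int_0^T[f(\cdot,t)]_{W^{\bar s,1}(K)}\,dt\Bigr)
\]
for any $s'<\bar s<1$. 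The paper then bounds the first term via Lemma~\ref{truncate energy}, exactly as you propose, and the second as described above.
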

\begin{proof}
We prove the statement for $\{(u_h)^{(\ell)}_+\}$ only, as the other case is similarly shown. Let $2 \leq \ell \in \mathbb{N} $ and $s \in (0,1)$ be arbitrarily given and fixed. Firstly, by ~\eqref{FSineq.2} in Lemma~\ref{FSineq.II}, for all $s^\prime, \bar{s}$ satisfying $0<s^\prime<\bar{s}<s<1$ there holds
\begin{equation}\label{lemma 4.3 eq.0}
\left[(u_h)_+^{(\ell)}\right]_{W^{s^\prime,1}(K_T)} \leq C \left(\left\|\partial_t(u_h)_+^{(\ell)}\right\|_{L^1(K_T)} +\int_0^T \left[(u_h)_+^{(\ell)}(\cdot,t)\right]_{W^{\bar{s},1}(K)}\,dt \right).
\end{equation}
By the energy estimates~\eqref{truncate energy1} and~\eqref{truncate energy2} in Lemma~\ref{truncate energy}, the first term on the right-hand side of~\eqref{lemma 4.3 eq.0} is bounded uniformly in $h$. The Gagliardo semi-norm appearing on the right-hand side of~\eqref{lemma 4.3 eq.0} is estimated as
\begin{align}\label{lemma 4.3 eq.00}
\left[(u_h)_+^{(\ell)}(\cdot,t)\right]_{W^{\bar{s},1}(K)}
&\leq \iint_{K \times K}\dfrac{\left|(u_h)_+(x,t)-(u_h)_+(y,t) \right|}{|x-y|^{(n+sp)\frac{1}{p}}}\cdot \dfrac{1}{|x-y|^{n+\bar{s}-(n+sp)\frac{1}{p}}}\,dxdy  \notag\\[3mm]
&\leq \left( \,\,\,\iint_{K \times K}\dfrac{\left|(u_h)_+(x,t)-(u_h)_+(y,t) \right|^p}{|x-y|^{n+sp}}\,dxdy \right)^{\frac{1}{p}} \notag \\[3mm]
&\quad \quad \quad \times \left( \,\,\,\iint_{K \times K}\dfrac{1}{|x-y|^{\frac{p}{p-1}\left(n+\bar{s}-(n+sp)\frac{1}{p}\right)}}\,dxdy \right)^{\frac{p-1}{p}}.
\end{align}
Noticing that $\frac{p}{p-1}\left(n+\bar{s}-(n+sp)\frac{1}{p}\right)=n+\frac{p}{p-1}(\bar{s}-s)<n$ and letting $R:=\mathrm{diam}\,K$ we compute that
\begin{align*}
\iint_{K \times K}\dfrac{1}{|x-y|^{\frac{p}{p-1}\left(n+\bar{s}-(n+sp)\frac{1}{p}\right)}}\,dxdy &\leq \int_K \left(\,\int_{B_R(x)} \dfrac{dy}{|x-y|^{n+\frac{p}{p-1}(\bar{s}-s)}} \right)\,dx \\[3mm]
&=C(n)|K|\int_0^R\dfrac{d \rho}{\rho^{1+\frac{p}{p-1}(\bar{s}-s)}} \\[3mm]
&=C(n)|K|\dfrac{p-1}{p(s-\bar{s})}R^{\frac{p}{p-1}(s-\bar{s})}
\end{align*}
and thus, the second integral on the right-hand side of~\eqref{lemma 4.3 eq.00} is finite. We therefore obtain from the energy estimate~\eqref{energy est. NE2} in Lemma~\ref{energy est. NE} that
\begin{align*}
\left[(u_h)_+^{(\ell)}(\cdot,t)\right]_{W^{\bar{s},1}(K)} &\leq  C(n,p,\bar{s},s) \left[(u_h)_+(\cdot,t)\right]_{W^{\bar{s},p}(\bR^n)} \notag\\[3mm]
&\leq C(n,p,\bar{s},s) \left[u_0\right]_{W^{\bar{s},p}(\bR^n)},
\end{align*}
which, together this with~\eqref{lemma 4.3 eq.0} and the fact that $\left\| (u_h)_+^{(\ell)}(\cdot,t) \right\|_{L^1(K)} \leq \ell|K_T|$, in turn yields that $\left\{(u_h)_+^{(\ell)}\right\}_{h>0}$ is bounded  in $W^{s^\prime,1}(K_T)$. Hence, by~Lemma~\ref{space-time Sobolev embedding}, for all $\gamma$ with $1 \leq \gamma<\frac{n+1}{n+1-s^\prime}$, there exist a subsequence $\{(u_h)_+^{(\ell)}\}_{h>0}$ (also labelled with $h$) and a limit function $\omega_\ell^+$ fo each $\ell$ such that
\begin{equation*}\label{lemma 4.3 eq.1}
(u_h)_+^{(\ell)} \to \omega_\ell^+\quad \mathrm{strongly\,\,in}\quad L^\gamma(K_T)
\end{equation*}
as $h \searrow 0$. Therefore the proof of Lemma~\ref{Cauchy} is concluded.
~\end{proof}
We shall make a Chebyshev type estimate for the truncated solutions:
\begin{lem}\label{Chebyshev}
Let $\{u_m\}$ be the approximate solution to~\eqref{NE}. Let $\ell \geq 2$  and $\gamma \in [1, p_s^\star)$ be taken arbitrarily. Then there holds that
\begin{equation}\label{lemma 4.3 eq.3}
\left| K \cap \{(u_m)_+ \geq \ell \} \right| \leq \dfrac{C}{\ell^{p_s^\star}}[u_0]_{W^{s,p}(\bR^n)}^{p_s^\star}
\end{equation}
and
\begin{equation}\label{lemma 4.3 eq.4}
\left\|(u_m)_+\right\|_{L^\gamma(K \,\cap\, \{(u_m)_+ \geq \,\ell \})} \leqq \dfrac{C}{\ell^{\frac{p_s^\star}{\gamma}-1}}[u_0]_{W^{s,p}(\bR^n)}^{\frac{p_s^\star}{\gamma}}.
\end{equation}
\end{lem}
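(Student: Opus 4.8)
The plan is to derive both estimates from the single uniform bound \eqref{energy est. NE4}, namely $\sup_{0<t<T}[\bar u_h(t)]_{W^{s,p}(\bR^n)}\le[u_0]_{W^{s,p}(\bR^n)}$, together with the Sobolev embedding \eqref{embedRn}. Since each $u_m$ equals $\bar u_h(\cdot,t)$ for $t\in(t_{m-1},t_m]$, the bound $[u_m]_{W^{s,p}(\bR^n)}\le[u_0]_{W^{s,p}(\bR^n)}$ holds for every $m$, and $u_m\in W^{s,p}_0(\Omega)\subset W^{s,p}(\bR^n)$, so Lemma~\ref{Sobolev embedding} gives $\|u_m\|_{L^{p^\star_s}(\bR^n)}\le C[u_m]_{W^{s,p}(\bR^n)}\le C[u_0]_{W^{s,p}(\bR^n)}$. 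The positive part satisfies $\|(u_m)_+\|_{L^{p^\star_s}(K)}\le\|u_m\|_{L^{p^\star_s}(\bR^n)}\le C[u_0]_{W^{s,p}(\bR^n)}$ for any bounded $K$.

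For \eqref{lemma 4.3 eq.3} I would apply Chebyshev's inequality directly: on the set $K\cap\{(u_m)_+\ge\ell\}$ we have $(u_m)_+\ge\ell$, hence
\[
\ell^{p^\star_s}\,\bigl|K\cap\{(u_m)_+\ge\ell\}\bigr|\le\int_{K\cap\{(u_m)_+\ge\ell\}}(u_m)_+^{p^\star_s}\,dx\le\|(u_m)_+\|_{L^{p^\star_s}(K)}^{p^\star_s}\le C[u_0]_{W^{s,p}(\bR^n)}^{p^\star_s},
\]
which is exactly \eqref{lemma 4.3 eq.3} after dividing by $\ell^{p^\star_s}$.

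For \eqref{lemma 4.3 eq.4} I would interpolate via Hölder's inequality on the set $E_\ell:=K\cap\{(u_m)_+\ge\ell\}$, using the exponent pair $\bigl(\frac{p^\star_s}{\gamma},\frac{p^\star_s}{p^\star_s-\gamma}\bigr)$ (valid since $\gamma<p^\star_s$):
\[
\|(u_m)_+\|_{L^\gamma(E_\ell)}^\gamma=\int_{E_\ell}(u_m)_+^\gamma\,dx\le\Bigl(\int_{E_\ell}(u_m)_+^{p^\star_s}\,dx\Bigr)^{\gamma/p^\star_s}|E_\ell|^{1-\gamma/p^\star_s}.
\]
The first factor is bounded by $\bigl(C[u_0]_{W^{s,p}(\bR^n)}^{p^\star_s}\bigr)^{\gamma/p^\star_s}=C[u_0]_{W^{s,p}(\bR^n)}^\gamma$, and the measure factor is estimated by \eqref{lemma 4.3 eq.3}: $|E_\ell|^{1-\gamma/p^\star_s}\le\bigl(C\ell^{-p^\star_s}[u_0]_{W^{s,p}(\bR^n)}^{p^\star_s}\bigr)^{1-\gamma/p^\star_s}=C\ell^{-(p^\star_s-\gamma)}[u_0]_{W^{s,p}(\bR^n)}^{p^\star_s-\gamma}$. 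Multiplying and taking the $\gamma$-th root yields
\[
\|(u_m)_+\|_{L^\gamma(E_\ell)}\le C\,\ell^{-\frac{p^\star_s-\gamma}{\gamma}}[u_0]_{W^{s,p}(\bR^n)}^{\frac{p^\star_s}{\gamma}}=\frac{C}{\ell^{\frac{p^\star_s}{\gamma}-1}}[u_0]_{W^{s,p}(\bR^n)}^{\frac{p^\star_s}{\gamma}},
\]
which is \eqref{lemma 4.3 eq.4}. I do not anticipate a genuine obstacle here; the only point requiring a little care is ensuring the embedding constant in Lemma~\ref{Sobolev embedding} applies on all of $\bR^n$ (which it does, since $u_m\in W^{s,p}_0(\Omega)\subset W^{s,p}(\bR^n)$ and \eqref{embedRn} is stated for $v\in W^{s,p}(\bR^n)$), and tracking that all constants depend only on $n,s,p$ (and $\gamma$ in the second estimate). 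Implicitly one needs $sp<n$ so that $p^\star_s$ is defined; this is the standing hypothesis under which the truncation machinery of Section~\ref{Sect. 4} operates.
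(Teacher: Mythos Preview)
Your proof is correct and follows essentially the same approach as the paper: Chebyshev's inequality for \eqref{lemma 4.3 eq.3} and H\"older's inequality with exponents $(p_s^\star/\gamma,\,p_s^\star/(p_s^\star-\gamma))$ for \eqref{lemma 4.3 eq.4}, both resting on the uniform $L^{p_s^\star}$ bound for $u_m$. The only cosmetic difference is that the paper cites this bound via Lemma~\ref{energyum} (specifically \eqref{energyumeq.1}$_1$), whereas you unpack it directly from \eqref{energy est. NE4} and the Sobolev embedding \eqref{embedRn}.
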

\begin{proof}
We report the short proof. By~\eqref{energyumeq.1}$_1$ in~Lemma~\ref{energyum}, one has
\begin{align*}
\ell^{p_s^\star}\left| K \cap \{(u_m)_+ \geq \ell \} \right| \leq  \int_K (u_m)_+^{p_s^\star}\,dx &\leq \|u_m\|_{L^{p_s^\star}(K)}^{p_s^\star} \leq C[u_0]_{W^{s,p}(\bR^n)}^{p_s^\star},
\end{align*}
which in turn implies~\eqref{lemma 4.3 eq.3}. 
By H\"{o}lder's inequality, ~\eqref{lemma 4.3 eq.3} and~\eqref{energyumeq.1} in~Lemma~\ref{energyum}, one estimates that
\begin{align*}
\left\|(u_m)_+\right\|_{L^\gamma(K \,\cap\, \{(u_m)_+ \geq \,\ell \})}^\gamma &\leq \left(\,\int_{K \,\cap\, \{(u_m)_+ \geq \,\ell \}} |(u_m)_+|^{p_s^\star}\,dx \right)^{\frac{\gamma}{p_s^\star}} \left| K \cap \{(u_m)_+ \geq \ell \} \right|^{1-\frac{\gamma}{p_s^\star}} \notag\\[3mm]
&\leq C\dfrac{[u_0]_{W^{s,p}(K)}^{p_s^\star}}{\ell^{p_s^\star-\gamma}},
\end{align*}
which is exactly~\eqref{lemma 4.3 eq.4}.
\end{proof}

We now will show that $\{(u_h)_{\pm}\}_{h>0}$ is a Cauchy sequence in $L^\gamma(K_T)$ in the next lemma.

\begin{lem}\label{Cauchyuh}
$\{(u_h)_{\pm}\}_{h>0}$ is a Cauchy sequence in $L^\gamma(K_T)$ for $1 \leq \gamma < \min \left\{
p_s^\star, \frac{n + 1}{n + 1 - s^\prime} \right\}$ and $0 <s^\prime < s (< 1)$.
\end{lem}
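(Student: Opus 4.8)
The plan is to show that $\{(u_h)_\pm\}_{h>0}$ is Cauchy in $L^\gamma(K_T)$ by splitting each $u_h$ into a truncated part $(u_h)_\pm^{(\ell)}$, for which strong convergence in $L^\gamma(K_T)$ is already available from Lemma~\ref{Cauchy}, and a ``tail'' part, which is uniformly small in $L^\gamma(K_T)$ thanks to the Chebyshev-type estimate of Lemma~\ref{Chebyshev}. Fix $\gamma$ with $1\le\gamma<\min\{p_s^\star,\frac{n+1}{n+1-s'}\}$; by Lemma~\ref{monotone} and by shrinking $s'$ if necessary, we may also pick $\bar s$ with $s'<\bar s<s$ so that the hypotheses of Lemma~\ref{Cauchy} hold. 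For the positive part (the negative part is identical), write, for $(x,t)\in\bR^n\times[t_{m-1},t_m]$,
\[
(u_h)_+(x,t)-(u_h)_+^{(\ell)}(x,t)=\frac{t-t_{m-1}}{h}\Big((u_m)_+-(u_m)_+^{(\ell)}\Big)+\frac{t_m-t}{h}\Big((u_{m-1})_+-(u_{m-1})_+^{(\ell)}\Big),
\]
and observe that the quantity $(u_m)_+-(u_m)_+^{(\ell)}$ is supported in $[u_m<\tfrac1\ell]\cup[u_m>\ell]$ and is pointwise dominated by $(u_m)_+\mathbf 1_{\{(u_m)_+\ge\ell\}}+\tfrac1\ell\mathbf 1_{\{(u_m)_+<\frac1\ell\}}$.

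The main computation is then to bound $\|(u_h)_+-(u_h)_+^{(\ell)}\|_{L^\gamma(K_T)}$ uniformly in $h$ by a quantity tending to $0$ as $\ell\to\infty$. Using the Minkowski inequality on the convex-combination structure above, it suffices to control $\sup_m\|(u_m)_+-(u_m)_+^{(\ell)}\|_{L^\gamma(K)}$ (integrated over the finite time interval, which only contributes a factor $T^{1/\gamma}$). The contribution of the region $\{(u_m)_+<\frac1\ell\}$ is at most $\ell^{-1}|K|^{1/\gamma}$, hence $\to0$. The contribution of $\{(u_m)_+\ge\ell\}$ is precisely $\|(u_m)_+\|_{L^\gamma(K\cap\{(u_m)_+\ge\ell\})}$, which by~\eqref{lemma 4.3 eq.4} in Lemma~\ref{Chebyshev} is bounded by $C\ell^{-(\frac{p_s^\star}{\gamma}-1)}[u_0]_{W^{s,p}(\bR^n)}^{p_s^\star/\gamma}$; since $\gamma<p_s^\star$ the exponent $\frac{p_s^\star}{\gamma}-1$ is strictly positive, so this also $\to0$ as $\ell\to\infty$, uniformly in $m$ and hence in $h$.

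With these two estimates in hand the Cauchy property follows by a standard $3\varepsilon$-argument: given $\varepsilon>0$, first choose $\ell$ so large that $\|(u_h)_+-(u_h)_+^{(\ell)}\|_{L^\gamma(K_T)}<\varepsilon/3$ for every $h>0$; then, for this fixed $\ell$, Lemma~\ref{Cauchy} gives a subsequence along which $(u_h)_+^{(\ell)}$ converges strongly in $L^\gamma(K_T)$, hence is Cauchy there, so $\|(u_h)_+^{(\ell)}-(u_{h'})_+^{(\ell)}\|_{L^\gamma(K_T)}<\varepsilon/3$ for $h,h'$ small; combining via the triangle inequality yields $\|(u_h)_+-(u_{h'})_+\|_{L^\gamma(K_T)}<\varepsilon$. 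The same argument applies verbatim to $(u_h)_-$. One subtlety worth flagging is the bookkeeping of subsequences: Lemma~\ref{Cauchy} extracts an $\ell$-dependent subsequence, so to get a single sequence that is Cauchy one must either pass to a diagonal subsequence over $\ell\in\mathbb N$ or, more simply, note that the uniform tail bound forces the \emph{whole} family $\{(u_h)_\pm\}$ to be totally bounded (precompact) in $L^\gamma(K_T)$; either way the conclusion is the stated Cauchy property. The step I expect to require the most care is precisely this interplay between the $h$-uniform tail smallness and the $\ell$-dependent subsequence extraction, rather than any single estimate, each of which is routine given Lemmata~\ref{Cauchy} and~\ref{Chebyshev}.
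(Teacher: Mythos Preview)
Your proposal is correct and follows essentially the same route as the paper: the triangle-inequality splitting $\|(u_h)_+-(u_{h'})_+\|_{L^\gamma}\le\|(u_h)_+-(u_h)_+^{(\ell)}\|+\|(u_h)_+^{(\ell)}-(u_{h'})_+^{(\ell)}\|+\|(u_{h'})_+^{(\ell)}-(u_{h'})_+\|$, the tail bound via Lemma~\ref{Chebyshev}, and the middle term via Lemma~\ref{Cauchy}. Your explicit flagging of the $\ell$-dependent subsequence extraction is a genuine subtlety that the paper's proof glosses over (it simply takes $\limsup_{h,h'\searrow0}$ then $\ell\to\infty$ without comment); your diagonal/precompactness resolution is the right fix.
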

\begin{proof}
We prove the statement for $\{(u_h)^{(\ell)}_+\}$ only, as the other case is similarly treated. We start with dividing into three terms:
\begin{align}\label{lemma 4.3 eq.2}
&\left\|(u_h)_+-(u_{h'})_+\right\|_{L^\gamma(K_T)} \notag\\[2mm]
&\leq \left\|(u_h)_+-(u_h)_+^{(\ell)}\right\|_{L^\gamma(K_T)} +\left\|(u_h)_+^{(\ell)}-(u_{h'})_+^{(\ell)}\right\|_{L^\gamma(K_T)} +\left\|(u_{h'})_+^{(\ell)}-(u_{h'})_+\right\|_{L^\gamma(K_T)}  \notag\\[2mm]
&:=\mathbf{I}_{h,\ell}+\mathbf{I}_{h,h',\ell}+\mathbf{I}_{h',\ell}.
\end{align}

From now on, we shall estimate each term $\mathbf{I}_{h,\ell},\,\mathbf{I}_{h,h',\ell}$ and $\mathbf{I}_{h',\ell}$ in~\eqref{lemma 4.3 eq.2}. Since for $(x,t) \in K \times (t_{m-1},t_m)\,\,(m =1, 2, \cdots)$
\begin{align*}
&(u_h)_+(x,t)-(u_h)_+^{(\ell)}(x,t) \\
&=\dfrac{t-t_{m-1}}{h}\left((u_m)_+(x)-(u_m)_+^{(\ell)}(x) \right)+\dfrac{t_m-t}{h}\left((u_{m-1})_+(x)-(u_{m-1})_+^{(\ell)}(x)\right)
\end{align*}
By the Minkowski inequality, $\mathbf{I}_{h,\ell}$ is estimated as
\begin{align*}
\mathbf{I}_{h,\ell}&\leq\left(h \sum_{m = 1}^N \left\|(u_m)_+ - (u_m)_+^{(\ell)}\right\|_{L^\gamma (K)}^\gamma \right)^{\frac{1}{\gamma}}
+
\left( h \sum_{m = 1}^N \left\| (u_{m - 1})_+ - (u_{m - 1})_+^{(\ell)} \right\|_{L^\gamma (K)}^\gamma
\right)^{\frac{1}{\gamma}}\\[2mm]
&=:I_1+I_2.
\end{align*}
$I_2$ is estimated similarly as for $I_1$, so we will estimate $I_1$.
Since $(u_m)_+-(u_m)_+^{(\ell)}=0$ in $K \cap \left\{\frac{1}{\ell}<(u_m)_+<\ell\right\}\,\,(m = 1, 2, \ldots)$ from~\eqref{lemma 4.3 eq.3} and~\eqref{lemma 4.3 eq.4} we observe that
\begin{align*}
\left\|(u_m)_+-(u_m)_+^{(\ell)}\right\|_{L^\gamma(K)} 
&\leq \left\|(u_m)_+-(u_m)_+^{(\ell)}\right\|_{L^\gamma(K\,\cap\, \{(u_m)_+ \leq \frac{1}{\ell}\})}+\left\|(u_m)_+-(u_m)_+^{(\ell)}\right\|_{L^\gamma(K\,\cap\, \{(u_m)_+\geq\ell\})} \notag \\[5mm]
&\leq \left\|(u_m)_+\right\|_{L^\gamma(K\,\cap\, \{(u_m)_+ \leq \frac{1}{\ell}\})}+\Big\|\underbrace{(u_m)_+^{(\ell)}}_{\equiv\, \frac{1}{\ell}}\Big\|_{L^\gamma(K\,\cap\, \{(u_m)_+ \leq \frac{1}{\ell}\})} \notag \\[3mm]
&\quad \quad \quad +\left\|(u_m)_+\right\|_{L^\gamma(K\,\cap\, \{(u_m)_+\geq\ell\})}+\Big\|\underbrace{(u_m)_+^{(\ell)}}_{\equiv \,\ell}\Big\|_{L^\gamma(K\,\cap\, \{(u_m)_+\geq\ell\})} \notag \\[3mm]
&\leq \dfrac{1}{\ell} \left|K \cap\left\{(u_m)_+ \leq \frac{1}{\ell}\right\}\right|^{\frac{1}{\gamma}}+\dfrac{1}{\ell} \left|K \cap \left\{(u_m)_+ \leq \frac{1}{\ell} \right\}\right|^{\frac{1}{\gamma}}\notag \\[3mm]
&\quad \quad \quad +\left\|(u_m)_+\right\|_{L^\gamma(K\,\cap\, \{(u_m)_+\geq\ell\})}+\ell \left|K \cap\left\{(u_m)_+ \geq \ell\right\}\right|^{\frac{1}{\gamma}} \notag\\[3mm]
&\!\!\!\!\!\!\!\!\!\!\stackrel{\eqref{lemma 4.3 eq.3},~\eqref{lemma 4.3 eq.4}}{\leq} \dfrac{2}{\ell}\left|K\right|^{\frac{1}{\gamma}}+\frac{C}{\ell^{\frac{p_s^\star}{\gamma}-1}}[u_0]_{W^{s,p}(\bR^n)}^{\frac{p_s^\star}{\gamma}}+\ell \left(\frac{C}{\ell^{p_s^\star}}[u_0]_{W^{s,p}(\bR^n)}^{p_s^\star}\right)^{\frac{1}{\gamma}} \notag \\[2mm]
&=C\left(\frac{1}{\ell}\left|K\right|^{\frac{1}{\gamma}}+\frac{[u_0]_{W^{s,p}(\bR^n)}^{\frac{p_s^\star}{\gamma}}}{\ell^{\frac{p_s^\star}{\gamma}-1}}\right).
\end{align*}
This leads to 
\[
I_1 \leq C(T+1)\left(\frac{1}{\ell^\gamma}\left|K\right|+ \frac{[u_0]_{W^{s,p}(\bR^n)}^{p_s^\star}}{\ell^{p_s^\star-\gamma}}\right),
\]
which in turn gives
\begin{equation}\label{lemma 4.3 eq.5}
\mathbf{I}_{h,\ell} \leq C(T+1)\left(\frac{1}{\ell^\gamma}\left|K\right|+ \frac{[u_0]_{W^{s,p}(\bR^n)}^{p_s^\star}}{\ell^{p_s^\star-\gamma}}\right) =:\zeta(\ell).
\end{equation}
%
Clearly, $\mathbf{I}_{h', \ell}$ is estimated similarly as $\mathbf{I}_{h, \ell}$.
Joining~\eqref{lemma 4.3 eq.5} and~\eqref{lemma 4.3 eq.2} with~Lemma~\ref{Cauchy}, we come up with
\begin{align*}
\limsup_{h,h'\searrow 0}\left\|(u_h)_+-(u_{h'})_+\right\|_{L^\gamma(K_T)} &\leq \limsup_{h\searrow 0} \mathbf{I}_{h,\ell}+\limsup_{h,h'\searrow 0} \mathbf{I}_{h,h',\ell}+\limsup_{h'\searrow 0} \mathbf{I}_{h',\ell} \\[3mm]
&\leq 2\zeta(\ell).
\end{align*}
Noticing by $p_s^\star >\gamma$ that $\lim \limits_{\ell \to \infty}\zeta(\ell)=0$ and sending $\ell \to \infty$ in the above display, we finally arrive at 
\[
\lim\limits_{h,h'\searrow 0}\left\|(u_h)_+-(u_{h'})_+\right\|_{L^\gamma(K_T)}=0.
\]
Similarly as above we also have, 
\[
\lim\limits_{h,h'\searrow 0}\left\|(u_h)_--(u_{h'})_-\right\|_{L^\gamma(K_T)}=0.
\]
\end{proof}
\subsection{Convergence of approximate solutions I}
Here we shall present the convergence of the approximate solutions $u_h$ and $\bar{u}_h$.
\begin{lem}[Convergence of approximate solutions I]\label{convergence result0}
For any $\gamma \in [1, \max\{p_s^\star, q+1\})$, there exists a limit function $u \in L^\gamma (\Omega_T)$ such that, for any a Lipschitz bounded domain $K \subset \bR^n$ and every positive number $T< \infty$,
\begin{align}
&u_h,\,\bar{u}_h\to u \quad  \textrm{a.e.\,\,in}\,\,K_T,
\label{conv. 3} \\
&u_h,\,\bar{u}_h \to u \quad \textrm{strongly\,\,in}\,\,L^\gamma(K_T). \label{conv. 2}
\end{align}

\end{lem}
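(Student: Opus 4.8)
The plan is to extract an almost-everywhere convergent subsequence from the bounded families $\{u_h\}$ and $\{\bar u_h\}$, identify a common limit $u$, and then upgrade to strong $L^\gamma$ convergence by combining equi-integrability (from the energy bounds) with the Vitali-type statement in Lemma~\ref{convergence lemma a}. The key observation is the decomposition $u_h = (u_h)_+ - (u_h)_-$ (and similarly $\bar u_h$), which reduces everything to the truncation machinery already developed: by Lemma~\ref{Cauchyuh}, each of $\{(u_h)_+\}_{h>0}$ and $\{(u_h)_-\}_{h>0}$ is Cauchy in $L^\gamma(K_T)$ for every $\gamma < \min\{p_s^\star, \frac{n+1}{n+1-s'}\}$, hence converges strongly there; call the limits $\omega_\pm$ and set $u := \omega_+ - \omega_-$.

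First I would fix an exhausting sequence of Lipschitz bounded domains $K_j \nearrow \bR^n$ (e.g. balls $B_j$) and a sequence $T_j \nearrow \infty$, apply Lemma~\ref{Cauchyuh} on each $K_j \times (0,T_j)$, and use a diagonal argument to produce a single subsequence of $h \searrow 0$ along which $(u_h)_\pm \to \omega_\pm$ strongly in $L^\gamma(K_{j,T_j})$ for all $j$; passing to a further subsequence gives convergence almost everywhere in $\bR^n \times (0,\infty)$, and therefore $u_h \to u$ a.e. Because $u_h = \bar u_h = 0$ outside $\Omega$, the limit $u$ vanishes a.e.\ outside $\Omega$ as well, so in fact we may regard $u$ as defined on $\Omega_\infty$ and restrict all statements to any $K_T$. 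For the step $\bar u_h \to u$ a.e., I would invoke~\eqref{4} of Lemma~\ref{elementary est.}, namely $|\bar u_h(x,t) - u_h(x,t)| \le |\bar u_h(x,t) - \bar u_h(x,t-h)|$, together with the bound~\eqref{(i)} of Lemma~\ref{L2-est.} on $\partial_t w_h$ which controls, via the algebraic inequality~\eqref{Alg1}, the time oscillation of $|u_h|^{\frac{q-1}{2}}u_h$ in $L^2$; passing to the limit and arguing as in the previous lemmata shows $\bar u_h - u_h \to 0$ in a suitable $L^\gamma(K_T)$, hence a.e.\ along a further subsequence. This establishes~\eqref{conv. 3}.

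It remains to promote a.e.\ convergence to strong $L^\gamma(K_T)$ convergence for every $\gamma \in [1, \max\{p_s^\star, q+1\})$, which is where the main work lies and the main obstacle appears: the strong convergence supplied directly by Lemma~\ref{Cauchyuh} only covers exponents below $\min\{p_s^\star, \frac{n+1}{n+1-s'}\}$, which is generally much smaller than $\max\{p_s^\star, q+1\}$. To reach higher exponents I would use the uniform bounds of Lemma~\ref{energyum} — specifically $\sup_{0<t<T}\|\bar u_h(t)\|_{L^{p_s^\star}(K)} \le \tilde C [u_0]_{W^{s,p}(\bR^n)}$ from~\eqref{energyumeq.1}, hence $\{\bar u_h\}$ bounded in $L^{p_s^\star}(K_T)$ — and~\eqref{energy est. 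NE1} which gives $\{\bar u_h\}$ bounded in $L^\infty(0,T;L^{q+1}(\Omega))$, hence bounded in $L^{q+1}(K_T)$. Thus $\{u_h\},\{\bar u_h\}$ are bounded in $L^a(K_T)$ for $a = \max\{p_s^\star, q+1\}$; since they converge a.e.\ to $u$, the second assertion of Lemma~\ref{convergence lemma a} (applied to $f_h = u_h - u$, or directly via uniform integrability of $\{|u_h|^\gamma\}$ obtained by Hölder from the $L^a$ bound) yields $\|u_h - u\|_{L^\gamma(K_T)}^\gamma \to 0$ for every $\gamma < a$, and similarly for $\bar u_h$. This gives~\eqref{conv. 2}, and membership $u \in L^\gamma(\Omega_T)$ follows from Fatou's lemma applied to the a.e.\ limit together with the same uniform bounds. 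The delicate point to get right is ensuring all these subsequence extractions are compatible, i.e.\ performed once and for all by a single diagonalization over $(K_j, T_j)$, so that a single $u$ works simultaneously for every $K$ and every $T$.
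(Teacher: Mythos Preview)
Your overall strategy is correct and matches the paper's: decompose $u_h=(u_h)_+-(u_h)_-$, use Lemma~\ref{Cauchyuh} to get strong $L^\gamma$ convergence for small $\gamma$, extract an a.e.\ convergent subsequence, then upgrade to all $\gamma<\max\{p_s^\star,q+1\}$ via the uniform $L^{p_s^\star}$ and $L^{q+1}$ bounds from Lemmata~\ref{energy est. NE} and~\ref{energyum} together with Lemma~\ref{convergence lemma a}. That part is fine.

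The one step where your argument is incomplete is the passage from $u_h\to u$ a.e.\ to $\bar u_h\to u$ a.e. You invoke~\eqref{4} and the $L^2$ bound on $\partial_t w_h$, but $\partial_t w_h$ controls the time oscillation of $|u_m|^{\frac{q-1}{2}}u_m$, not of $u_m$ itself; getting back from $|\bar w_h-w_h|\le h|\partial_t w_h|$ to a bound on $|\bar u_h-u_h|$ requires inverting the power $\sigma\mapsto|\sigma|^{\frac{q-1}{2}}\sigma$ via the algebraic inequalities, which forces a case distinction $q\ge 1$ versus $0<q<1$ and some H\"older gymnastics that you have not written out. The paper avoids this by reusing the truncation machinery a second time: one writes
\[
\|(\bar u_h)_+-(u_h)_+\|_{L^1(K_T)}\le \|(\bar u_h)_+-(\bar u_h)_+^{(\ell)}\|_{L^1}+\|(\bar u_h)_+^{(\ell)}-(u_h)_+^{(\ell)}\|_{L^1}+\|(u_h)_+^{(\ell)}-(u_h)_+\|_{L^1},
\]
bounds the outer terms by $\zeta(\ell)\to 0$ exactly as in~\eqref{lemma 4.3 eq.5}, and bounds the middle term by $h\|\partial_t(u_h)_+^{(\ell)}\|_{L^1(K_T)}$, which is finite for each fixed $\ell$ by Lemma~\ref{truncate energy}. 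Sending $h\searrow 0$ then $\ell\to\infty$ gives $(\bar u_h)_+-(u_h)_+\to 0$ in $L^1(K_T)$ with no case split. Your route via $\partial_t w_h$ can be made to work, but as written it is a gap; the paper's truncation argument is the cleaner way to close it.
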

\begin{proof}
Noticing that $u_h=(u_h)_+-(u_h)_-$, by Lemma~\ref{Cauchyuh}, for all $\gamma$, $1 \leq \gamma<\min\{p_s^\star,\frac{n+1}{n+1-s^\prime}\}$, there exists a limit function $u \in L^\gamma(K_T)$ such that
\begin{equation}\label{x}
u_h \to u \quad \mathrm{strongly\,\,in}\,\, L^\gamma(K_T)
\end{equation}
as $h \searrow 0$. Thus, we have that a subsequence $\{ u_h \}_{h > 0}$ denoted by the same notation, converges to $u$ almost everywhere in $K_T$, that is the first statement of~\eqref{conv. 3}. This fact is combined with~\eqref{energy est. NE1'} in Lemma~\ref{energy est. NE'}, ~\eqref{energyumeq.1}$_2$ in Lemma~\ref{energyum} to yield through Fatou's lemma that, for $\gamma=p_s^\star$ and $\gamma=q+1$,
\begin{equation}\label{alpha}
\sup_{0 < t < T}\|u(t)\|_{L^\gamma (K)}\leq C \max \Big\{ \|u_0\|_{L^{q+1} (\Omega)},
\,[u_0]_{W^{s, p} (\bR^n)}\Big\}.
\end{equation}
For the validity of second statement of~\eqref{conv. 3}, we employ Lemma~\ref{convergence lemma a} and the truncated argument, as discussed in Section~\ref{Sect. Convergence results for the truncated function}. 
%
%
For this, we shall handle $\{(\bar{u}_h)_+\}_{h>0}$ only, as the other case is similarly treated. By the Minkowski inequality we have
\begin{align}\label{lemma 4.2 eq.4}
&\left\|(\bar{u}_h)_+-(u_h)_+ \right\|_{L^1(K_T)} \notag\\[3mm]
&\leq \left\|(\bar{u}_h)_+-(\bar{u}_h)_+^{(\ell)} \right\|_{L^1(K_T)}+\left\|(\bar{u}_h)_+^{(\ell)}-(u_h)_+^{(\ell)} \right\|_{L^1(K_T)}+\left\|(u_h)_+^{(\ell)}-(u_h)_+ \right\|_{L^1(K_T)} \notag\\[3mm]
&=:\mathbf{I}_{h,\ell}+\mathbf{II}_{h,\ell}+\mathbf{III}_{h,\ell}.
\end{align}
Arguing similarly as~\eqref{lemma 4.3 eq.5}, we infer that
\begin{equation}\label{lemma 4.2 eq.5}
\mathbf{I}_{h,\ell}, \,\,\mathbf{III}_{h,\ell} \leq C(T+1)\left(\frac{1}{\ell}\left|K\right|+ \frac{[u_0]_{W^{s,p}(\bR^n)}^{p_s^\star}}{\ell^{p_s^\star-1}}\right) =:\zeta(\ell),
\end{equation}
where $\zeta(\ell) \to 0$ as $\ell \to \infty$.
Similarly as in~\eqref{2} in Lemma~\ref{elementary est.}, we have
\begin{align*}
\left|(\bar{u}_h)_+^{(\ell)}-(u_h)_+^{(\ell)} \right| \leq h \left|\partial_t (u_h)_+^{(\ell)} \right|
\end{align*}
and thus, 
\[
\mathbf{II}_{h,\ell}=\left\|(\bar{u}_h)_+^{(\ell)}-(u_h)_+^{(\ell)} \right\|_{L^1(K_T)} \leq h \left\|\partial_t (u_h)_+^{(\ell)} \right\|_{L^1(K_T)}.
\]
Noting that $q+1>1$, from~\eqref{truncate energy1} and ~\eqref{truncate energy2} in Lemma~\ref{truncate energy} we find
\begin{equation}\label{lemma 4.2 eq.6}
\lim_{h \searrow 0}\mathbf{II}_{h,\ell}=0.
\end{equation}
From~\eqref{lemma 4.2 eq.4},~\eqref{lemma 4.2 eq.5} and~\eqref{lemma 4.2 eq.6}, we obtain
\begin{align*}
\limsup_{h \searrow 0}\left\|(\bar{u}_h)_+-(u_h)_+ \right\|_{L^1(K_T)} &\leq \limsup_{h \searrow 0}\mathbf{I}_{h,\ell}+\limsup_{h \searrow 0}\mathbf{II}_{h}+\limsup_{h \searrow 0}\mathbf{II}_{h} \\[3mm]
&\leq 2\zeta(\ell)
\end{align*}
and, taking the limit as $\ell \to \infty$ we have
\begin{equation}\label{y}
\lim_{h \searrow 0}\left\|(\bar{u}_h)_+-(u_h)_+ \right\|_{L^1(K_T)}=0.
\end{equation}
%
%
%
%
%
%
%
Furthermore, we make the inequality
\[
\|(\bar{u}_h)_+ - u_+\|_{L^1 (K_T)} \leq \|(\bar{u}_h)_+ - (u_h)_+\|_{L^1 (K_T)} + \|(u_h)_+ - u_+\|_{L^1 (K_T)},
\]
which converge to zero as $h \searrow 0$, by the convergences~\eqref{x} and~\eqref{y}.
Thus, a subsequence $\{ (\bar{u}_h)_+ \}_{h>0}$ converges to $u_+$ almost everywhere in $K_T$ as $h \searrow 0$. Analogously, the convergence  $(\bar{u}_h)_- \to u_-$ a.e. in $K_T$ is verified.
Hence the second assertion of~\eqref{conv. 3} is concluded. 

For the proof~\eqref{conv. 2}, we verify that $\left\{(\bar{u}_h)_+ - u_+ \right\}_{h > 0}$ is bounded in $L^\gamma (K_T)$ for $\gamma=p_s^\star$ and $\gamma=q + 1$. In fact, by ~\eqref{energy est. NE1}$_1$ in Lemma~\ref{energy est. NE}, ~\eqref{energyumeq.1}$_1$ in Lemma~\ref{energyum} and~\eqref{alpha}
\[
\|(\bar{u}_h)_+ - u_+\|_{L^\gamma (K_T)} \leq C \max \Big\{\|u_0\|_{L^{q + 1} (\Omega)}, \,[u_0]_{W^{s, p} (\bR^n)}
\Big\}.
\]
Therefore, from Lemma~\ref{convergence lemma a} it follows that, for any $\gamma \in [1,  \max \{p_s^\ast, q+1 \})$, 
\[
\lim_{h \searrow 0} \|({\bar u}_h)_+ - u_+\|_{L^\gamma (K_T)} = 0.
\]
Similarly as above we also have 
\[
\lim_{h \searrow 0}\left\|(\bar{u}_h)_--u_- \right\|_{L^\gamma(K_T)}=0.
\]
Joining two displays above, we actually have the second statement of~\eqref{conv. 2}. The first statement of~\eqref{conv. 2} is shown in the same way as for the second one above. The proof is concluded.
\end{proof}

\subsection{Convergence of approximate solutions II}
 The following convergence results actually holds true:

\begin{lem}[Convergence of approximate solutions II]\label{convergence result}
Let $u_h$,\,$\bar{u}_h$,\,$\bar{v}_h$,\,$v_h$,\,$w_h$ and $\bar{w}_h$ be the approximate solutions of~\eqref{maineq}
defined by~\eqref{approx. sol. for maineq.1}--\eqref{approx. sol. for maineq.2}. For $s \in (0,1)$, let $s^\prime <s$ be a positive number. 
Then there exist subsequences of $\{u_h\}_{h>0}$,\,$\{\bar{u}_h\}_{h>0}$,\,$\{v_h\}_{h>0}$,\,$\{\bar{v}_h\}_{h>0}$,\,$\{w_h\}_{h>0}$ and $\{\bar{w}_h\}_{h>0}$ (denoted by the same symbol unless otherwise stated) and a limit function $u \in L^\infty(0,T\,;L^{q+1}(\Omega))$ such that,
for any bounded domain $K \subset \bR^n$, for every positive number $T<\infty$, the following convergences hold true:
\begin{align}
\bar{u}_h \to u \quad &\ast\textrm{-weakly\,\,in}\,\,L^\infty(0,\infty\,; L^{q+1}(\Omega)),
\label{conv. 1} \\
w_h,  \bar{w}_h \to |u|^{\frac{q - 1}{2}} u
\quad &\textrm{strongly\,\,in}\,\,L^\beta(K_T), \quad \forall \beta \in [1,2),
\label{conv. a} \\
v_h,\,\bar{v}_h \to |u|^{q-1}u \quad &\textrm{strongly\,\,in}\,\,L^\alpha(K_T),\quad \forall \alpha \in [1,\tfrac{q+1}{q})
\label{conv. 4} 
\end{align}
as $h\searrow 0$. 
\end{lem}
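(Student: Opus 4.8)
The plan is to upgrade the almost-everywhere convergence $u_h,\bar u_h\to u$ obtained in Lemma~\ref{convergence result0} to the claimed convergences for the composed quantities $|u|^{\frac{q-1}{2}}u$ and $|u|^{q-1}u$, using uniform $L^a$-bounds together with Lemma~\ref{convergence lemma a} (Vitali). First I would record the pointwise consequences: since $u_h,\bar u_h\to u$ a.e.\ in $K_T$ and $t\mapsto|t|^{\frac{q-1}{2}}t$ and $t\mapsto|t|^{q-1}t$ are continuous on $\bR$, the continuous mapping theorem gives $w_h,\bar w_h\to|u|^{\frac{q-1}{2}}u$ and $v_h,\bar v_h\to|u|^{q-1}u$ a.e.\ in $K_T$. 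Here one must be slightly careful: $w_h$ and $v_h$ are the \emph{linear interpolants} of $|u_m|^{\frac{q-1}{2}}u_m$ and $|u_m|^{q-1}u_m$, not the compositions $|u_h|^{\frac{q-1}{2}}u_h$, $|u_h|^{q-1}u_h$. So I would instead pass through the step functions: from $\bar u_h\to u$ a.e.\ one gets $\bar w_h=|\bar u_h|^{\frac{q-1}{2}}\bar u_h\to|u|^{\frac{q-1}{2}}u$ and $\bar v_h=|\bar u_h|^{q-1}\bar u_h\to|u|^{q-1}u$ a.e.; for the interpolants I would use~\eqref{4} in Lemma~\ref{elementary est.} to compare $w_h$ with $\bar w_h$ and $v_h$ with $\bar v_h$ and show the differences tend to $0$, at least along a subsequence, once the requisite uniform integrability is in hand.

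Next I would establish the uniform bounds needed to invoke Vitali. For the $w$-quantities: by~\eqref{energy est. NE1} (and~\eqref{energy est. NE1'}) we have $\sup_t\int_\Omega|\bar u_h(t)|^{q+1}\,dx\le\int_\Omega|u_0|^{q+1}\,dx$, hence $\bar w_h$ and $w_h$ are bounded in $L^\infty(0,T;L^{2(q+1)/(q+1)}(\Omega))=L^\infty(0,T;L^{2}(\Omega))$... more precisely $|\bar w_h|^{2(q+1)/(q+1)}=|\bar u_h|^{q+1}$, so $\{\bar w_h\}$, $\{w_h\}$ are bounded in $L^\infty(0,T;L^{\frac{2(q+1)}{q+1}}(\Omega))$; since $\frac{2(q+1)}{q+1}=2$, this says they are bounded in $L^2(K_T)$. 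Then Lemma~\ref{convergence lemma a} with $a=2$, $b=\beta\in[1,2)$ applied to $\bar w_h-|u|^{\frac{q-1}{2}}u$ (which is bounded in $L^2(K_T)$ because the limit also lies in $L^2(K_T)$ by Fatou/\eqref{alpha}) and tends to $0$ a.e., yields $\bar w_h\to|u|^{\frac{q-1}{2}}u$ in $L^\beta(K_T)$ for all $\beta\in[1,2)$; the same for $w_h$. For the $v$-quantities: $|\bar v_h|^{\frac{q+1}{q}}=|\bar u_h|^{q+1}$, so $\{\bar v_h\}$, $\{v_h\}$ are bounded in $L^\infty(0,T;L^{\frac{q+1}{q}}(\Omega))$, hence in $L^{\frac{q+1}{q}}(K_T)$; Lemma~\ref{convergence lemma a} with $a=\frac{q+1}{q}$ and the a.e.\ convergence then gives $v_h,\bar v_h\to|u|^{q-1}u$ in $L^\alpha(K_T)$ for every $\alpha\in[1,\frac{q+1}{q})$. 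The $\ast$-weak convergence~\eqref{conv. 1} follows from the uniform $L^\infty(0,\infty;L^{q+1}(\Omega))$ bound~\eqref{energy est. NE1} via Banach--Alaoglu (sequential weak-$\ast$ compactness on the separable predual $L^1(0,\infty;L^{\frac{q+1}{q}}(\Omega))$, after a diagonal argument over $T\uparrow\infty$), and the limit must coincide with the a.e.\ limit $u$ from Lemma~\ref{convergence result0}, giving $u\in L^\infty(0,\infty;L^{q+1}(\Omega))$.

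The main technical obstacle I anticipate is the comparison between the linear interpolants and the step functions for $v_h,w_h$ — i.e.\ controlling $\|v_h-\bar v_h\|_{L^\alpha(K_T)}$ and $\|w_h-\bar w_h\|_{L^\beta(K_T)}$ — because the bound~\eqref{4} only gives $|v_h-\bar v_h|\le|\bar v_h(\cdot,t)-\bar v_h(\cdot,t-h)|$, and there is no direct $L^\alpha$-in-time equicontinuity of the step function $\bar v_h$. The cleanest route is to avoid needing that estimate altogether: establish the a.e.\ convergence of $\bar w_h,\bar v_h$ directly from that of $\bar u_h$, then for the interpolants observe that $w_h$ lies pointwise between $|u_m|^{\frac{q-1}{2}}u_m$ and $|u_{m-1}|^{\frac{q-1}{2}}u_{m-1}$ by convexity of $t\mapsto|t|^{\frac{q-1}{2}}t$ for $q\ge1$ (and a signed monotonicity argument for $0<q<1$, splitting by the sign of $u_m,u_{m-1}$), so $|w_h|\le|\bar w_h(\cdot,t)|+|\bar w_h(\cdot,t-h)|$ by~\eqref{3}; combined with the a.e.\ convergence of $\bar w_h$ and Lemma~\ref{convergence lemma a} this forces $w_h\to|u|^{\frac{q-1}{2}}u$ a.e.\ (along a subsequence) and then in $L^\beta$. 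The analogous argument handles $v_h$. Throughout, $K$ bounded non-Lipschitz is harmless here since we only use $L^p$ Poincaré-type bounds and Vitali, not compact embeddings; when Lipschitzness of $K$ is invoked it is only in the earlier lemmas already cited.
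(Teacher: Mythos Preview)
Your treatment of the weak-$\ast$ convergence~\eqref{conv. 1} and of the step functions $\bar w_h,\bar v_h$ is correct and matches the paper: a.e.\ convergence of $\bar u_h$ plus continuity of $t\mapsto|t|^{\frac{q-1}{2}}t$, $t\mapsto|t|^{q-1}t$ gives a.e.\ convergence of $\bar w_h,\bar v_h$; the uniform $L^2$ (resp.\ $L^{\frac{q+1}{q}}$) bounds follow from~\eqref{energy est. NE1}; and Lemma~\ref{convergence lemma a} upgrades this to strong $L^\beta$ (resp.\ $L^\alpha$) convergence.

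The gap is in your handling of the interpolants $w_h,v_h$. You write that ``there is no direct $L^\alpha$-in-time equicontinuity of the step function $\bar v_h$'', but there is:~\eqref{2} in Lemma~\ref{elementary est.} gives $|\bar w_h-w_h|\le h|\partial_t w_h|$ and $|\bar v_h-v_h|\le h|\partial_t v_h|$, and Lemma~\ref{L2-est.} furnishes exactly the bounds you need. From~\eqref{(i)} one gets $\|w_h-\bar w_h\|_{L^2(K_T)}\le Ch[u_0]_{W^{s,p}(\bR^n)}^{p/2}\to 0$, which combined with the $\bar w_h$ convergence yields~\eqref{conv. a} for $w_h$. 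For $v_h$ the paper splits cases: if $q\ge 1$,~\eqref{(ii)} gives $\|v_h-\bar v_h\|_{L^1}\le Ch\to 0$; if $0<q<1$, there is no direct $\partial_t v_h$ bound, and one instead uses~\eqref{Alg1} to write $|v_h-\bar v_h|\le C(|\bar u_h(t)|+|\bar u_h(t-h)|)^{q-1}|\bar u_h(t)-\bar u_h(t-h)|\le Ch^q(\ldots)^{q(1-q)/2}\cdot(\ldots)^{q(q-1)/2}|\partial_t u_h|^q$, and then H\"older twice together with~\eqref{energy est. NE1} and~\eqref{energy est. NE3} yields $\|v_h-\bar v_h\|_{L^1}\le Ch^q\to 0$. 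In either case one recovers a.e.\ convergence of $v_h$ along a subsequence, and the uniform $L^{\frac{q+1}{q}}$ bound plus Lemma~\ref{convergence lemma a} closes~\eqref{conv. 4}.

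Your proposed workaround---sandwiching $w_h$ between $\bar w_h(\cdot,t)$ and $\bar w_h(\cdot,t-h)$ and inferring a.e.\ convergence---does not work as stated. The bound~\eqref{3} controls $|w_h|$, not $|w_h-|u|^{\frac{q-1}{2}}u|$, so it gives uniform integrability but not pointwise convergence. To get a.e.\ convergence of $w_h$ by sandwiching you would need $\bar w_h(\cdot,\cdot-h)\to|u|^{\frac{q-1}{2}}u$ a.e., i.e.\ $\bar u_h(x,t-h)\to u(x,t)$ a.e., and this is \emph{not} a consequence of $\bar u_h\to u$ a.e.\ (the evaluation point moves with $h$). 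The time-derivative estimates of Lemma~\ref{L2-est.} are precisely the missing ingredient that makes this comparison quantitative.
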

\begin{proof}[\bf{Proof of Lemma~\ref{convergence result}}]
To show each of the convergence in the statement, we shall use the energy estimates in Lemmata \ref{energy est. NE} and
\ref{L2-est.}. 
Let us prove Lemma~\ref{convergence result} step by step, as displayed below.

\textbf{Step 1: The proof of~\eqref{conv. 1}.}  
By~\eqref{energy est. NE1} in Lemma \ref{energy est. NE}, we have 
\begin{equation}\label{lemma 4.2 eq.1}
\{\bar{u}_h\}_{h>0}:\quad \textrm{bounded\,\,in}\,\, L^{\infty}(0,\infty ; L^{q+1}(\Omega)).
\end{equation}
Then there exist a subsequence $\{\bar{u}_h\}$ (written by the same symbol) and the limit function $u \in L^\infty (0, \infty ; L^{q + 1} (\Omega))$ such that~\eqref{conv. 1} holds true. Note that this limit function $u$ is equal to that obtained in Lemma ~\ref{convergence result0}, by the convergence in Lemma~\ref{convergence result0} of $u_h$ and $\bar{u}_h$.

\medskip

\textbf{Step 2: The proof of~\eqref{conv. a}.}
We further divide this step into two steps:
\smallskip

\textbf{Step 2.1.}
We shall prove the strong convergence $\bar{w}_h$ to $|u|^{\frac{q-1}{2}}u$  in $L^\beta(K_T)$ for $\beta \in [1, 2)$. By the energy estimate~\eqref{energy est. NE1} in Lemma~\ref{energy est. NE}, we have
\begin{align*}
\iint_{K_T} \big| |\bar{u}_{h}|^{\frac{q-1}{2}}\bar{u}_{h}-|u|^{\frac{q-1}{2}}u\big|^{2}\,dxdt \leq  2\iint_{\Omega_T}\left(|u_0|^{q+1} + |u|^{q+1}\right)\,dx 
\end{align*}
and thus,  $\{ \bar{w}_h-|u|^{\frac{q-1}{2}}u\}_{h > 0}$ is bounded in $L^2 (K_T)$. By ~\eqref{conv. 3} $\bar{w}_h$ converges to $|u|^{\frac{q - 1}{2}} u$ almost everywhere in $K_T$.  From Lemma~\ref{convergence lemma a} with $\psi \equiv 1$ it follows that $\displaystyle \iint_{K_T} \big| |\bar{u}_{h}|^{\frac{q-1}{2}}\bar{u}_{h} -|u|^{\frac{q-1}{2}}u\big|^{\beta}\,dxdt \to 0$, that is,
\begin{equation}\label{lemma 4.2 eq.7}
\bar{w}_h \to |u|^{\frac{q-1}{2}}u\quad  \mathrm{strongly\,\,in}\,\, L^\beta(K_T). 
\end{equation}
This proves the first statement of~\eqref{conv. a}.
\smallskip

\textbf{Step 2.2.} We now observe the difference of $\bar{w}_h$ and $w$. By~\eqref{2} in Lemma~\ref{elementary est.} we have, for $(x,t) \in  \bR^n \times [0,\infty)$, 
\begin{align*}
\left|w_h-\bar{w}_h\right| \leq h\left|\partial_t w_h\right|
\end{align*}
and thus, by the energy estimate~\eqref{(i)} in Lemma~\ref{L2-est.}, 
\begin{align*}
\left\|w_h-\bar{w}_h \right\|_{L^2(K_T)} &\leq h \left\|\partial_t w_h\right\|_{L^2(K_T)} \leq Ch[u_0]_{W^{s,p}(\bR^n)}^{\frac{p}{2}}
\end{align*}
which converges to zero as $h \searrow 0$. This together with~\eqref{lemma 4.2 eq.7} finishes the proof of~\eqref{conv. a}.
\smallskip

\textbf{Step 3: The proof of~\eqref{conv. 4}}. Similarly as in Step 3, we proceed the proof. 
\smallskip

\textbf{Step 3.1.}
Let $\alpha \in [1,\tfrac{q+1}{q})$ be any number. Using the energy estimate~\eqref{energy est. NE1} in Lemma~\ref{energy est. NE}, we have
\begin{align}\label{7}
\iint_{K_T} \left||\bar{u}_h|^{q - 1} \bar{u}_h- |u|^{q - 1} u \right|^{\frac{q+1}{q}}\,dxdt \leq 2^{\frac{1}{q}}\iint_{\Omega_T}\left(|u_0|^{q+1} + |u|^{q+1}\right)\,dx 
\end{align}
and thus,  $\{ \bar{v}_h- |u|^{q - 1} u \}_{h > 0}$ is bounded in $L^{\frac{q+1}{q}} (K_T)$. By ~\eqref{conv. 2} $\bar{v}_h$ converges
to $|u|^{q - 1} u$ almost everywhere in $K_T$.  From Lemma~\ref{convergence lemma a} with $\psi \equiv 1$ we obtain that, for $\alpha \in [1,\frac{q}{q+1})$,
\[
{\bar v}_ h \to|u|^{q - 1} u\quad \textrm{strongly\,\,in}\,\,\, L^\alpha (K_T).
\]

\textbf{Step 3.2.}
Finally, we shall verify the convergence of $v_h$ to $|u|^{q-1}u$ in $L^\alpha(K_T)$ for $\alpha \in [1,\frac{q+1}{q})$. To conclude the proof we now distinguish two cases. Whenever $q\geq 1$, there holds that 
\[
\left| |\bar{u}_h|^{q-1}\bar{u}_h-v_h\right|=\left|\bar{v}_h-v_h\right| \leq h \left|\partial_t v_h \right|
\]
and thus, by the energy estimate~\eqref{(ii)} in Lemma~\ref{L2-est.}
\begin{align}\label{8}
\left\|\bar{v}_h-v_h\right\|_{L^1(K_T)} &\leq h\left\|\partial_t v_h \right\|_{L^1(K_T)} \notag \\
& \leq Ch\left|K_T\right|^{\frac{1}{q+1}}\|u_0\|_{L^{q+1}(\Omega)}^{\frac{q-1}{2}}[u_0]_{W^{s,p}(\bR^n)}^{\frac{p}{2}}.
\end{align}
When $0<q<1$, by~\eqref{3} in Lemma~\ref{elementary est.} and~\eqref{Alg1} in Lemma~\ref{Algs} we estimate for any $(x,t) \in \bR^n \times [0,\infty)$, 
\begin{align*}
|v_h - {\bar v}_h|&\leq C \left(|\bar{u}_h (t)|+ |\bar{u}_h (t - h)|\right)^{q - 1}\left|
\bar{u}_h(t)-\bar{u}_h (t-h)\right| \\[2mm]
&\leq C \left|\bar{u}_h (t) - \bar{u}_h (t - h)\right|^q\\[2mm]
&= C h^q \left(|\bar{u}_h (t)| + |\bar{u}_h (t - h)|\right)^{\frac{q (1- q)}{2}}\left(|\bar{u}_h (t)| + |\bar{u}_h (t - h)|\right)^{\frac{q (q - 1)}{2}}|\partial_t u_h|^q
\end{align*}
and thus, we subsequently use H\"{o}lder's inequality twice with pair of exponents
$\left(\frac{2}{q}, \frac{2}{2 - q}\right)$ and $\left(\frac{(2-q)(q+1)}{q(1-q)},\,\frac{(2-q)(q+1)}{2}\right)$, the energy inequalities~\eqref{energy est. NE1} and~\eqref{energy est. NE3} in Lemma~\ref{energy est. NE} to have 
\begin{align}\label{9}
\iint_{K_T}|v_h - \bar{v}_h|\,dxdt &\leq h^q\left(\,\iint_{K_T}\left(|\bar{u}_h (t)|+ |\bar{u}_h (t - h)|\right)^{q - 1}|\partial_t u_h|^2\,dxdt\right)^{\frac{q}{2}} \notag\\[3mm]
&\quad \quad \quad \quad \quad \times \left(\,\iint_{K_T}\left(|\bar{u}_h (t)|+ |\bar{u}_h (t - h)|\right)^{\frac{q (1 - q)}{2- q}}\,dxdt\right)^{\frac{2 - q}{2}} \notag \\[3mm]
&\leq h^q\left(\,\iint_{K_T}\left(|\bar{u}_h (t)|+ |\bar{u}_h (t - h)|\right)^{q - 1}|\partial_t u_h|^2\,dxdt\right)^{\frac{q}{2}} \notag\\[3mm]
&\quad \quad \quad \quad \quad \times \left(\,\iint_{K_T}\left(|\bar{u}_h (t)|+ |\bar{u}_h (t - h)|\right)^{q+1}\,dxdt\right)^{\frac{q(1-q)}{2(q+1)}}\left|K_T\right|^{\frac{1}{q+1}} \notag \\[3mm]
&\leq
Ch^q  \left|K_T\right|^{\frac{1}{q+1}}
\|u_0\|_{L^{q + 1} (\Omega)}^{\frac{q(1-q)}{2}}
[u_0]_{W^{s, p} (\bR^n)}^{\frac{p q}{2}}.
\end{align}
Passing to the limit $h \searrow 0$ in~\eqref{8} and~\eqref{9} from the convergence in Step 3.2, we deduce the convergence  in $L^1 (K_T)$ of $v_h$ and thus, convergence almost everywhere in $K_T$ of a subsequence of $v_h$ to $|u|^{q - 1} u$. 

Similarly as~\eqref{7} above we also see that $\left\{v_h - |u|^{q-1} u\right\}_{h>0}$ is bounded in $L^{\frac{q + 1}{q}} (K_T)$. Thus, by Lemma~\ref{convergence lemma a}, this together with the almost convergence show the validity of the convergence of $v_h$ to $|u|^{q-1}u$ in $L^\alpha(K_T)$ for $\alpha \in [1,\frac{q+1}{q})$.
\medskip

Finally, the proof of Lemma~\ref{convergence result} is complete.
\end{proof}

From Lemma~\ref{convergence lemma a} we easily deduce the following  convergence, used later in Section~\ref{Sect. 5}.
\begin{lem}\label{convergence result2}
For every test function $\varphi \in \mathscr{T}$, there holds that
\begin{equation}\label{wh}
-\iint_{\Omega_T} w_h \, \partial_t \varphi \,dxdt \to -\iint_{\Omega_T} |u|^{\frac{q-1}{2}}u \,\partial_t \varphi \,dxdt
\end{equation}
and
\begin{equation}\label{vh}
-\iint_{\Omega_T} v_h \, \partial_t \varphi \,dxdt \to -\iint_{\Omega_T} |u|^{q-1}u \,\partial_t \varphi \,dxdt
\end{equation}
as $h \searrow 0$.
\end{lem}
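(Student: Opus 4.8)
The plan is to deduce \eqref{wh} and \eqref{vh} as direct applications of Lemma~\ref{convergence lemma a}, treating $\partial_t \varphi$ as the fixed test function $\psi$ in that lemma. First I would fix $\varphi \in \mathscr{T}$, so that by definition $\partial_t \varphi \in L^{q+1}(\Omega_T)$; since $\varphi$ vanishes at $t=0$ and $t=T$ and is supported (in time) on $(0,T)$, the relevant integrals take place on $\Omega_T = \Omega \times (0,T)$ and $K_T$ can be taken with $K \supset \Omega$ (or simply $K = \Omega$), so there is no issue with the spatial domain. The two ingredients required by Lemma~\ref{convergence lemma a} are: (i) an a.e.\ convergence $w_h \to |u|^{\frac{q-1}{2}}u$ and $v_h \to |u|^{q-1}u$ in $\Omega_T$, and (ii) uniform $L^a(\Omega_T)$-boundedness of $\{w_h\}$ and $\{v_h\}$ for some exponent $a$ whose conjugate $a'$ is no larger than the integrability of $\partial_t \varphi$.

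For ingredient (i), I would invoke Lemma~\ref{convergence result}: the strong convergences \eqref{conv. a} and \eqref{conv. 4} in $L^\beta(K_T)$ and $L^\alpha(K_T)$ respectively yield, after passing to a further subsequence, the a.e.\ convergences $w_h \to |u|^{\frac{q-1}{2}}u$ and $v_h \to |u|^{q-1}u$ in $\Omega_T$. For ingredient (ii), I would use Lemma~\ref{energy est. NE'} together with the Minkowski inequality applied to the convex combinations defining $v_h$ and $w_h$ in \eqref{approx. sol. for maineq.2}: from \eqref{energy est. NE1'} one gets
\[
\sup_{0<t<T}\int_\Omega |v_h(t)|^{\frac{q+1}{q}}\,dx \le \int_\Omega |u_0|^{q+1}\,dx,
\qquad
\sup_{0<t<T}\int_\Omega |w_h(t)|^{\frac{2(q+1)}{q+1}}\,dx = \sup_{0<t<T}\int_\Omega |w_h(t)|^{2}\,dx \le \int_\Omega |u_0|^{q+1}\,dx,
\]
(the pointwise bounds $|v_h|^{(q+1)/q} \le |u_h|^{q+1}$-type estimates following from \eqref{1} in Lemma~\ref{elementary est.} and convexity), hence $\{v_h\}$ is bounded in $L^{\frac{q+1}{q}}(\Omega_T)$ and $\{w_h\}$ is bounded in $L^2(\Omega_T)$. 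Since the H\"older conjugate of $\frac{q+1}{q}$ is $q+1$, and the conjugate of $2$ is $2 \le q+1$ exactly when $q \ge 1$ — but in fact $\partial_t\varphi \in L^{q+1}(\Omega_T) \subset L^2(\Omega_T)$ only needs $\Omega_T$ bounded and $q+1 \ge 2$; for $0 < q < 1$ one instead notes $\partial_t \varphi \in L^{q+1}(\Omega_T)$ with $q+1 < 2$, so one should rather pick the exponent for $w_h$ to match: I would use that $\{w_h\}$ is also bounded in $L^{q+1}(\Omega_T)$ when $q+1 \le 2$ (by H\"older on the bounded set $\Omega_T$ from the $L^2$ bound), whose conjugate is $\frac{q+1}{q} \le 2$, and since $\partial_t\varphi \in L^{q+1} \subset L^{\frac{q+1}{q}}$ on $\Omega_T$ bounded when $q \le 1$. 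In all cases one arranges a pair $(a, a')$ with $\{v_h\}$ (resp.\ $\{w_h\}$) bounded in $L^a$ and $\partial_t\varphi \in L^{a'}(\Omega_T)$.

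With both ingredients in hand, Lemma~\ref{convergence lemma a} applied with $f_h = w_h$, $f = |u|^{\frac{q-1}{2}}u$, $\psi = -\partial_t\varphi$ gives \eqref{wh}, and with $f_h = v_h$, $f = |u|^{q-1}u$, $\psi = -\partial_t\varphi$ gives \eqref{vh}. The main (only) obstacle is the bookkeeping of exponents in the case $0 < q < 1$, where $\frac{q+1}{q} > 2 > q+1$ and one must be careful that the integrability of $\partial_t\varphi$ (namely $q+1$) is dual to the available uniform bound on the approximants; here the boundedness of $\Omega_T$ is essential, since it allows downgrading any $L^{a}(\Omega_T)$ bound to $L^{b}(\Omega_T)$ for $b \le a$, which provides enough room to match the duality. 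Once the exponents are lined up, the convergence is immediate from the cited lemma, so the proof is short.
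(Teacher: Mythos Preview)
Your approach is exactly the paper's: prove both \eqref{wh} and \eqref{vh} by feeding the a.e.\ convergence from Lemma~\ref{convergence result} and a uniform $L^a$-bound on $\{w_h\}$, $\{v_h\}$ into Lemma~\ref{convergence lemma a} with $\psi=\partial_t\varphi$. For \eqref{vh} this works cleanly: $\{v_h\}$ is bounded in $L^{\frac{q+1}{q}}(\Omega_T)$ by \eqref{energy est. NE1'} (via \eqref{3} in Lemma~\ref{elementary est.}), and the conjugate exponent is exactly $q+1$, which is the integrability of $\partial_t\varphi$ for $\varphi\in\mathscr{T}$. This is precisely what the paper writes.

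For \eqref{wh}, however, your exponent bookkeeping in the range $0<q<1$ is wrong. You claim that the conjugate $\frac{q+1}{q}$ of $q+1$ satisfies $\frac{q+1}{q}\le 2$ and that $L^{q+1}(\Omega_T)\subset L^{\frac{q+1}{q}}(\Omega_T)$; both inequalities go the wrong way when $q<1$, since then $\frac{q+1}{q}=1+\tfrac1q>2>q+1$, and on a bounded set the inclusion of Lebesgue spaces is $L^{\text{large}}\subset L^{\text{small}}$. So downgrading the $L^2$ bound on $w_h$ to an $L^{q+1}$ bound does \emph{not} put you in a position to pair against $\partial_t\varphi\in L^{q+1}$ via H\"older; you would need $w_h$ bounded in $L^{\frac{q+1}{q}}$ with $\frac{q+1}{q}>2$, which is not available from the energy estimates alone. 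The paper glosses over this by saying ``the other case \eqref{wh} is similarly treated,'' but in fact \eqref{wh} is only invoked later (in \eqref{Proof eq.2}--\eqref{Proof eq.3}) for test functions $\varphi\in C^\infty_0(\Omega_\infty)$, for which $\partial_t\varphi\in L^2(\Omega_T)$ and the $L^2$ bound on $w_h$ suffices directly. If you want \eqref{wh} as stated for all $\varphi\in\mathscr{T}$ and all $q>0$, you would need an additional integrability input (e.g.\ $\bar u_h\in L^\infty(0,T;L^{p_s^\star})$ from Lemma~\ref{energyum} together with a restriction like $p_s^\star\ge \tfrac{(q+1)^2}{2q}$), which is not assumed.
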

\begin{proof}
We shall show~\eqref{vh} only, as the other case~\eqref{wh} is similarly treated.
%
%
%
By~\eqref{energy est. NE1'} in Lemma~\ref{energy est. NE'}
\begin{align*}
\iint_{\Omega_T}|v_h|^{\frac{q+1}{q}}\,dxdt \leq C(T+1)\|u_0\|_{L^{q+1}}^{q+1}
\end{align*}
and thus, $\{v_h\}_{h>0}$ is bounded in $L^{\frac{q+1}{q}}(\Omega_T)$. From~\eqref{conv. 4} in Lemma~\ref{convergence result}
\[
v_h \, \partial_t \varphi \to |u|^{q-1}u\,\partial_t \varphi \quad \textrm{a.e.\,\,in}\,\,\Omega_T.
\]
%
%
Noticing that $\partial_t \varphi \in L^{q+1}(\Omega_T)$ since $\varphi \in \mathscr{T}$ and choosing $\psi=\partial_t\varphi$ in Lemma~\ref{convergence lemma a} lead to
\[
\iint_{\Omega_T} v_h \, \partial_t \varphi \,dxdt \to \iint_{\Omega_T} |u|^{q-1}u \, \partial_t \varphi \,dxdt
\]
as $h \searrow 0$, finishing the proof.
\end{proof}

%
%

\section{Proof of Theorem~\ref{mainthm}}\label{Sect. 5}

Now we are in the position to prove the main Theorem~\ref{mainthm}  since we have all prerequisites at hand. 
\begin{proof}[\normalfont \textbf{Proof of Theorem~\ref{mainthm}}]
We shall show that the limit function $u$ obtained in Lemma~\ref{convergence result} is a weak solution of ~\eqref{NE}. For this we will verify that $u$ obeys the conditions in Definition~\ref{def. of weak sol.}. Firstly, we ensure the condition (D1) in Definition~\ref{def. of weak sol.}. 
%
%
By the convergence~\eqref{conv. 3} in Lemma~\ref{convergence result}, applied to~\eqref{energy est. NE4} in Lemma~\ref{energy est. NE} and Fatou's lemma we have that, for any positive $t<\infty$,
\begin{equation}\label{Proof eq.7}
[u(t)]_{W^{s,p}(\bR^n)} \leq \liminf_{h \searrow 0}\,[\bar{u}_h(t)]_{W^{s,p}(\bR^n)} \leq [u_0]_{W^{s,p}(\bR^n)}.
\end{equation}
Furthermore, by~\eqref{e.Poincare} in Lemma~\ref{t.Poincare} and~\eqref{Proof eq.7}
we have, for any positive $t < \infty$,
\begin{equation}\label{Proof eq.a}
\|u (t)\|_{L^p (\Omega)}\leq C [u_0]_{W^{s, p} (\bR^n)}.
\end{equation}
Thus, it follows from~\eqref{Proof eq.7} and~\eqref{Proof eq.a} that the limit function $u$ is in $L^\infty (0, \infty ; W^{s, p}_0 (\Omega))$. Similarly as~\eqref{Proof eq.7}, by~\eqref{energy est. NE1'} in Lemma~\ref{energy est. NE'}, we also have, for any positive $t < \infty$,
\begin{equation}\label{Proof eq.b}
\|u (t)\|_{L^{q + 1} (\Omega)} \leq \|u_0\|_{L^{q + 1} (\Omega)}
\end{equation}
which implies $u \in L^\infty(0,\infty ; L^{q+1}(\Omega))$ and thus, the proof of (D1) is accomplished.
%
%

By~\eqref{(i)} in Lemma~\ref{L2-est.}, $\{\partial_tw_h\}_{h>0}$ is bounded in $L^2(\Omega_\infty)$ and thus, there are a subsequence $\{\partial_tw_h\}_{h>0}$ (denoted by the same notation)
and a limit function $\omega \in L^2(\Omega_\infty)$ such that, as $h \searrow 0$,
\begin{equation}\label{Proof eq.1}
\partial_t w_h \to \omega \quad \textrm{weakly\,\,in}\,\,\,L^2(\Omega_\infty).\end{equation}
By use of the convergence~\eqref{wh} in Lemma~\ref{convergence result2} we pass to the limit as $h \searrow 0$ in the identity
\begin{equation}\label{Proof eq.2}
\iint_{\Omega_\infty}\partial_t w_h\cdot \varphi\,dxdt =-\iint_{\Omega_\infty}w_h\,\partial_t \varphi\,dxdt
\end{equation}
for every $\varphi \in C^\infty_0(\Omega_\infty)$ to have
\begin{equation}\label{Proof eq.3}
\omega=\partial_t(|u|^{\frac{q-1}{2}}u)\quad \textrm{in}\,\,L^2(\Omega_\infty).
\end{equation}
As a result, it follows from~\eqref{Proof eq.1} and~\eqref{Proof eq.3} that
\begin{equation}\label{Proof eq.A}
\partial_t w_h \to \partial_t(|u|^{\frac{q-1}{2}}u) \quad \textrm{weakly\,\,in}\,\,\,L^2(\Omega_\infty)
\end{equation}
and thus, from the convergence~\eqref{Proof eq.A} and~\eqref{(i)} in Lemma~\ref{L2-est.}, for any $T<\infty$,
\begin{equation}\label{Proof eq.8}
\left\|\partial_t (|u|^{\frac{q-1}{2}}u)\right\|_{L^2(\Omega_T)}^2 \leq C[u_0]_{W^{s,p}(\bR^n)}^p.
\end{equation}
Therefore the regularity part of Theorem~\ref{mainthm} is concluded.
%
%

We now turn to prove the condition (D2) in Definition~\ref{def. of weak sol.}.
Let $\varphi \in \mathscr{T}$, whose the definition is given in Definition~\ref{def. of weak sol.}. According to the notation in \eqref{Utxy}, we first set
\begin{align*}
\overline{U}_h(x,y,t)
&:=|\bar{u}_h(x,t)-\bar{u}_h(y,t)|^{p-2}(\bar{u}_h(x,t)-\bar{u}_h(y,t)),\\[2mm]
U(x,y,t)&:=|u(x,t)-u(y,t)|^{p-2}(u(x,t)-u(y,t)).
\end{align*}
For the estimation below let $R$ be any positive number such that the open ball $B_{R}$ with center of origin and radius $R$ compactly contains $\Omega$. In order to verify (D2) in Definition~\ref{def. of weak sol.}, we shall show the convergence, as $h\searrow 0$,
\begin{equation}\label{Proof eq.4}
\int_0^T \iint_{\bR^n \times \bR^n}
\dfrac{\overline{U}_h(x,y,t)\left(\varphi(x,t)-\varphi(y,t)\right)}{|x-y|^{n+sp}}\,dxdydt  \to
\int_0^T \iint_{\bR^n \times \bR^n}
\dfrac{U(x,y,t)\left(\varphi(x,t)-\varphi(y,t)\right)}{|x-y|^{n+sp}}\,dxdydt.
\end{equation}
%
%

We now set $B \equiv B_{R}$, $B^c:=\bR^n \setminus B$ and $D_{B}:=(\bR^n \times \bR^n ) \setminus (B^c \times B^c)$. 
For the validity of~\eqref{Proof eq.4}, we shall consider the difference of two fractional integrations on $D_B$, subsequently, we divide it into three terms as follows:
\begin{align}\label{Proof eq.5}
I_h&:=\left|\int_0^T\iint_{D_{B}} \Bigg[
\dfrac{\overline{U}_h(x,y,t)\left(\varphi(x,t)-\varphi(y,t)\right)}{|x-y|^{n+sp}} - \dfrac{U(x,y,t)\left(\varphi(x,t)-\varphi(y,t)\right)}{|x-y|^{n+sp}}\Bigg]\,dxdydt \right| \notag\\[2mm]
&\leq\int_0^T \iint_{B \times B}\ldots\,dxdydt+\int_0^T\iint_{B\times B^c}\ldots\,dxdydt+\int_0^T\iint_{B^c \times B}\ldots\,dxdydt
\notag\\[2mm]
&=:I_h^{(1)}+I_h^{(2)}+I_h^{(3)}.
\end{align}
%
%
%
%

We now proceed estimating the terms $I_h^{(1)},I_h^{(2)}$ and $I_h^{(3)}$. For the estimation of $I_h^{(1)}$,
we shall verify that
$\left\{\dfrac{\overline{U}_h(x,y,t)\left(\varphi(x,t)-\varphi(y,t)\right)}{|x-y|^{n+sp}}\right\}_{h>0}$
is uniformly integrable on $B \times B \times (0,T)$. 
Let $\Lambda \subset B \times B \times (0,T)$ be any measurable set.
By H\"{o}lder's inequality
and energy estimate~\eqref{energy est. NE2} in Lemma~\ref{energy est. NE},
we have
\begin{align*}
&\iiint_{\Lambda}\left|\dfrac{\overline{U}_h(x,y,t)\left(\varphi(x,t)-\varphi(y,t)\right)}{|x-y|^{n+sp}}\,\right|\,dxdydt \notag\\
&\leq \left(\iiint_{\Lambda}\dfrac{|\varphi(x,t)-\varphi(y,t)|^p}{|x-y|^{n+sp}}\,dxdydt \right)^\frac{1}{p} \left(\int_0^T\iint_{\bR^n \times \bR^n}\dfrac{|\bar{u}_h(x,t)-\bar{u}_h(y,t)|^p}{|x-y|^{n+sp}}\,dxdydt \right)^\frac{p-1}{p} \notag\\
&\!\!\stackrel{\eqref{energy est. NE2}}{\leq}C\left(\iiint_{\Lambda}\dfrac{|\varphi(x,t)-\varphi(y,t)|^p}{|x-y|^{n+sp}}\,dxdydt \right)^\frac{1}{p} \left(\int_\Omega |u_0|^{q+1}\,dx \right)^\frac{p-1}{p}.
\end{align*}
Due to the absolute continuity of the first Lebesgue integral on the right-hand side, for every positive number $\varepsilon$,
there exists a positive number $\delta$ such that
\[
\iiint_{\Lambda}
\left|\dfrac{\overline{U}_h(x,y,t)\left(\varphi(x,t)-\varphi(y,t)\right)}{|x-y|^{n+sp}}\right|\,dxdydt
<\varepsilon\quad \textrm{whenever}\quad
|\Lambda| <\delta
\]
and thus, $\left\{\dfrac{\overline{U}_h(x,y,t)\left(\varphi(x,t)-\varphi(y,t)\right)}{|x-y|^{n+sp}}\right\}_{h>0}$
is uniformly integrable on $B \times B \times (0,T)$.
From the almost everywhere convergence~\eqref{conv. 3} in Lemma~\ref{convergence result},
it follows that 
\begin{align*}
\dfrac{\overline{U}_h(x,y,t)\left(\varphi(x,t)-\varphi(y,t)\right)}{|x-y|^{n+sp}} \to  \dfrac{U(x,y,t)\left(\varphi(x,t)-\varphi(y,t)\right)}{|x-y|^{n+sp}}
\end{align*}
almost everywhere $(x,y,t) \in B \times B \times (0,T)$, 
where we notice that the set
\[
\left\{(x,y,t)\in B \times B \times (0,T): x=y \right\}
\]
is negligible with respect to the $(2n+1)$-dimensional Lebesgue measure on $\bR^n \times \bR^n \times \bR$.
Therefore, Vitalli's convergence theorem yields that
\begin{align*}
\int_0^T\iint_{B \times B}\dfrac{\overline{U}_h(x,y,t)\left(\varphi(x,t)-\varphi(y,t)\right)}{|x-y|^{n+sp}}\,dxdydt \to  \int_0^T\iint_{B \times B}\dfrac{U(x,y,t)\left(\varphi(x,t)-\varphi(y,t)\right)}{|x-y|^{n+sp}}\,dxdydt,
\end{align*}
as $h\searrow 0$, that is,
\begin{equation}\label{I_1}
\lim_{h\searrow 0} I_h^{(1)} =0.
\end{equation}
We are going to estimate $I_h^{(2)}$ and $I_h^{(3)}$.
Since these two integrals are symmetric with respect to $x$ and $y$, $I_h^{(3)}$ can be estimated similarly to $I_h^{(2)}$ and thus, we give an estimate of $I_h^{(2)}$. 
The boundary condition \eqref{NE}$_2$ implies that $\bar{u}_h=u=0$ in $\Omega^c\times (0,T)$ and, by $B^c \subset \Omega^c$, $\bar{u}_h(y,t)=u(y,t)=\varphi(y,t)=0$ for almost all $(y,t) \in B^c \times (0,T)$ because $\Omega$ is compactly contained in $B$ and $u_h = u = 0$ almost everywhere in $\Omega^c \times (0, T)$. Using H\"older's inequality, $I^{(2)}_h$ is estimated as
\begin{align}\label{eq.2 I2}
I_h^{(2)} &\leq \int_0^T \iint_{\Omega \times B^c}
\dfrac{\Big||\bar{u}_h(x,t)|^{p-2}\bar{u}_h(x,t)-|u(x,t)|^{p-2}u(x,t)\Big|}{|x-y|^{n+sp}}|\varphi(x,t)|\,dxdydt\notag \\[3mm]
&\leq \int_0^T \int_{\Omega}\left(\,\,\int_{B^c}\dfrac{dy}{|x-y|^{n+sp}}\right)\Big||\bar{u}_h(x,t)|^{p-2}\bar{u}_h(x,t)-|u(x,t)|^{p-2}u(x,t)\Big||\varphi(x,t)|\,dxdt   \notag\\[3mm]
&\leq C(n,s,p)d^{-sp}\iint_{\Omega_T}\Big||\bar{u}_h(x,t)|^{p-2}\bar{u}_h(x,t)-|u(x,t)|^{p-2}u(x,t)\Big||\varphi(x,t)|\,dxdt,
\end{align}
where, in the second line, we note that  $|x - y| \geq d : = \mathrm{dist} (\Omega, \partial B)$ for any $(x, y) \in \Omega \times B^c$, and estimate for $(x, y) \in \Omega \times B^c$
\begin{align*}
\int_{B^c}\dfrac{dy}{|x-y|^{n+sp}} &\leq \int_{B_d(x)^c}\dfrac{dy}{|x-y|^{n+sp}} \\[3mm]
&=C(n)\int_d^\infty \dfrac{d\rho}{\rho^{1+sp}}=C(n)\dfrac{d^{-sp}}{sp}.
\end{align*}
To further estimate the integral of the right hand side of~\eqref{eq.2 I2} we shall verify that  $\left\{\Big||\bar{u}_h|^{p-2}\bar{u}_h-|u|^{p-2}u\Big|\right\}_{h>0}$ is bounded in  $L^{\frac{p}{p-1}}(\Omega_T)$. By~\eqref{energyumeq.0} in Lemma~\ref{energyum}%
\begin{equation}\label{eq.a I2}
\iint_{\Omega_T} \left|\bar{u}_h(x,t)\right|^p\,dxdt \leq C\int_0^T [\bar{u}_h(t)]_{W^{s, p} (\bR^n)}^p\,dt
\leq C \|u_0\|_{L^{q+1} (\Omega)}^{q+1}.
\end{equation}
By the almost convergence~\eqref{conv. 3} in Lemma~\ref{convergence result},~\eqref{eq.a I2} and Fatou's lemma, we have
\[
\iint_{\Omega_T} \left|u(x,t)\right|^p\,dxdt \leq \liminf_{h \searrow 0}\iint_{\Omega_T} \left|\bar{u}_h(x,t)\right|^p\,dxdt \leq C\|u_0\|_{L^{q+1} (\Omega)}^{q+1}.
\]
This together with~\eqref{eq.a I2} yields%
\begin{align*}
&\iint_{K_T}\Big||\bar{u}_h(x,t)|^{p-2}\bar{u}_h(x,t)-|u(x,t)|^{p-2}u(x,t)\Big|^{\frac{p}{p-1}}\,dxdt \\[3mm]
& \leq 2^{\frac{1}{p-1}}\iint_{\Omega_T} \left(|\bar{u}_h(x,t)|^p+|u(x,t)|^p\right)\,dxdt  \\[3mm]
&\leq C\|u_0\|_{L^{q+1}(\Omega)}^{q+1}
\end{align*}
and thus, $\left\{\Big||\bar{u}_h|^{p-2}\bar{u}_h-|u|^{p-2}u\Big|\right\}_{h>0}$ is bounded in  $L^{\frac{p}{p-1}}(\Omega_T)$. 
This,~\eqref{eq.2 I2} and~\eqref{conv. 3} in Lemma~\ref{convergence result} through Lemma~\ref{convergence lemma a} yields
\begin{equation}\label{eq.5 I2}
\lim_{h \searrow 0}I_h^{(2)} =0.
\end{equation}
The preceding estimates~\eqref{I_1} and~\eqref{eq.5 I2} can be merged to give the following final estimate for $I_h$:
\begin{align*}
\limsup_{h \searrow 0}I_h \leq \limsup_{h \searrow 0}I_h^{(1)}+2\limsup_{h \searrow 0}I^{(2)}_h 
&=0,
\end{align*}
Therefore, by~\eqref{Proof eq.4} and~\eqref{vh} in Lemma~\ref{convergence result2}, we pass to the limit as $h \searrow 0$ in~\eqref{weak form NE'} to have
\[
-\iint_{\Omega_T}|u|^{q-1}u\,\partial_t\varphi\,dxdt+\dfrac{1}{2}\int_0^T\iint_{\bR^n \times \bR^n}\dfrac{U(x,y,t)}{|x-y|^{n+sp}}\left(\varphi(x,t)-\varphi(y,t)\right)\,dxdydt=0
\]
for all $\varphi \in \mathscr{T}$, which conclude the condition (D2). Here we remark that the class $\mathscr{T}$ of test functions in Definition~\ref{def. of weak sol.} is a subspace of that in~\eqref{weak form NE'}.
\medskip

Finally, we shall verify the condition (D3). As noted before, we have that $\bar{u}_h(t)=0$ almost everywhere in $\bR^n \setminus \Omega$ for any $t \geq 0$. Thus, by the convergence~\eqref{conv. 3} in Lemma~\ref{convergence result}, $u=0$ almost everywhere in $(\bR^n \setminus \Omega) \times (0,T)$ for any $T>0$, showing the validity of boundary condition in (D3).
%
%
\medskip

We test the approximating equation~\eqref{weak form NE'} on $\bR^n \times (t_1, t_2)$ for any $t_2 > t_1 \geq 0$,  with $\varphi(x)\sigma_{(t_1,t_2)}(t)$, where $\varphi \in C^\infty_0(\Omega)$ and $\sigma_{(t_1,t_2)}(t)$ is the usual Lipschitz approximation of characteristic function of time-interval $(t_1, t_2)$. By use of the H\"{o}lder inequality and ~\eqref{energy est. NE4} in Lemma~\ref{energy est. NE}, we have that, for any $t_2>t_1 \geq 0$,
\begin{align}\label{Proof eq.10}
\left|\,\,\int_{\Omega}\big(v_h(t_2)-v_h(t_1)\big) \varphi\,dx \right| &=\left|-\frac{1}{2}\int_{t_1}^{t_2}\iint_{\bR^n \times \bR^n}\frac{\overline{U}_h(x,y,t)}{|x-y|^{n+sp}}\big(\varphi(x)-\varphi(y)\big)\,dxdydt \right|\notag\\[3mm]
&\leq \frac{1}{2}\int_{t_1}^{t_2}[\bar{u}_h(t)]_{W^{s,p}(\bR^n)}^{p-1}\,dt\cdot [\varphi]_{W^{s,p}(\bR^n)} \notag \\[3mm]
&\!\!\stackrel{\eqref{energy est. NE4}}{\leq} \frac{1}{2}[u_0]_{W^{s,p}(\bR^n)}^{p-1}[\varphi]_{W^{s,p}(\bR^n)}(t_2-t_1),
\end{align}
implying by the density of $C^\infty_0 (\Omega)$ in $L^{q+1} (\Omega)$ that $v_h (t)$ is weakly continuous on $t \in [0, \infty)$
in $L^{\frac{q+1}{q}} (\Omega)$, uniformly on approximation parameter $h$. By~\eqref{3} in Lemma~\ref{elementary est.} and~\eqref{energy est. NE1} in Lemma~\ref{energy est. NE}, $\{v_h (t)\}_{h > 0}$ is bounded in $L^{\frac{q+1}{q}} (\Omega)$.
Thus, the Arzel\`{a}-Ascoli theorem gives that, as $h \searrow 0$,
\[
v_h \to |u|^{q-1}u 
\]
uniformly locally on $[0, \infty)$ and weakly in $L^{\frac{q+1}{q}} (\Omega)$. This together with~\eqref{Proof eq.4} applied to the first line of~\eqref{Proof eq.10} implies that, for $t_2> t_1= 0$,
\[
\left|\,\,\int_{\Omega}\big(|u|^{q-1}u(t_2)-|u_0|^{q-1}u_0\big) \varphi\,dx \right| =\left|-\frac{1}{2}\int_{0}^{t_2}\iint_{\bR^n \times \bR^n}\frac{U(x,y,t)}{|x-y|^{n+sp}}\big(\varphi(x)-\varphi(y)\big)\,dxdydt \right|
\]
yielding that $|u (t)|^{q - 1} u(t)$ converges to $|u_0|^{q-1}u_0$ weakly in $L^{\frac{q+1}{q}} (\Omega)$, as $t \searrow 0$. Similarly as above, we use the algebraic inequality~\eqref{Alg1} in Lemma~\ref{Algs} to get
\begin{equation}\label{Proof eq.c}
u(t) \to u_0 \quad \textrm{weakly\,\,in}\quad L^{q+1}(\Omega)
\end{equation}
as $t \searrow 0$. Furthermore, the inequality~\eqref{Proof eq.b} gives
\begin{equation}\label{Proof eq.d}
\limsup_{t \searrow 0}
\|u (t)\|_{L^{q + 1} (\Omega)}
\leq \|u_0\|_{L^{q + 1} (\Omega)}.
\end{equation}
From~\eqref{Proof eq.c} and~\eqref{Proof eq.d} it follows that
\begin{equation}\label{Proof eq.11}
u(t) \to u_0\quad \textrm{strongly\,\,in}\,\,L^{q+1}(\Omega)\,\,\,\textrm{as}\,\,t \searrow 0.
\end{equation}

We now have prerequisites at hand to verify the initial condition (D3). In view of the convergence~\eqref{Proof eq.11}, we have only to show $[u(t)-u_0]_{W^{s,p}(\bR^n)} \to 0$  as $t \searrow 0$. We first observe from~\eqref{Proof eq.11} and Fatou's lemma that
\[
[u_0]_{W^{s,p}(\bR^n)} \leq \liminf_{t \searrow 0} [u(t)]_{W^{s,p}(\bR^n)},
\]
which, together with~\eqref{Proof eq.7} yields 
\begin{equation}\label{Proof eq.12}
\lim_{t\searrow 0}[u(t)]_{W^{s,p}(\bR^n)} = [u_0]_{W^{s,p}(\bR^n)}.
\end{equation}
%
%
From the convergence~\eqref{Proof eq.11} and the boundedness~\eqref{Proof eq.7} we can apply Vitali's convergence theorem to have
%
%
%
\[
\dfrac{u(x,t)-u(y,t)}{|x-y|^{s+\frac{n}{p}}} \to \dfrac{u_0(x)-u_0(y)}{|x-y|^{s+\frac{n}{p}}} \quad \textrm{strongly\,\,in}
\,\,L^p(B_R\times B_R)
\]
for any ball $B_R$. 
We shall consider the integral on $(x, y) \in \bR^n \times \bR^n$ with a test function of the difference of two terms above, which corresponds to~\eqref{Proof eq.5} for~\eqref{Proof eq.4}. Arguing similarly as the proof of~\eqref{Proof eq.5} we obtain that

\begin{equation}\label{Proof eq.13}
\dfrac{u(x,t)-u(y,t)}{|x-y|^{s+\frac{n}{p}}} \to  \dfrac{u_0(x)-u_0(y)}{|x-y|^{s+\frac{n}{p}}}\quad \textrm{weakly\,\,in}
\,\,L^p(\bR^n \times \bR^n).
\end{equation}
Gluing~\eqref{Proof eq.12} with~\eqref{Proof eq.13} and adopting the Radon-Riesz theorem, we are led to the conclusion $[u(t)-u_0]_{W^{s,p}(\bR^n)} \to 0$ as $t \searrow 0$ and thus, the initial condition (D3) is actually verified.
%

%
Therefore the proof is complete.
\end{proof}


\makeatletter
\renewcommand{\thesection}{\Alph{section}.\arabic{subsection}}
\makeatother

\appendix

\makeatletter
\renewcommand{\thefigure}{\Alph{section}.\arabic{figure}}
\@addtoreset{figure}{section}
\makeatother

\section{Proof of Lemma~\ref{existence of NE}}\label{Appendix A}
In this appendix, we prove the existence of solutions of~\eqref{NE}, Lemma~\ref{existence of NE}. 
The proof is based on the direct method in the calculus of variations. 
In the course of this appendix, let $s \in (0,1)$, $p>1$ and $q>0$  be given and set $\mathcal{X}:=W_0^{s,p}(\Omega) \cap L^{q+1}(\Omega)$.
%
%
%
%
\begin{proof}[\normalfont \textbf{Proof of Lemma~\ref{existence of NE}}]
Given $u_{m-1} \in \mathcal{X}$, we define the functional on $\mathcal{X}$: 
\[
\mathcal{X} \ni w \longmapsto \mathcal{F}(w):=\dfrac{1}{h}\int_\Omega \left(\dfrac{1}{q+1}|w|^{q+1}-|u_{m-1}|^{q-1}u_{m-1}w \right)\,dx+\dfrac{1}{2p}[w]_{W^{s,p}(\bR^n)}^p.
\]
The proof is twofold: first, we shall show the existence of a minimizer of the functional $\mathcal{F}(w)$, then we shall demonstrate the first variation of $\mathcal{F}(w)$, both of them show the validity of Lemma~\ref{existence of NE}.

By Young's inequality, there holds for any $w \in \mathcal{X}$
\begin{align}\label{new eq.1 A1}
\mathcal{F}(w)&=\dfrac{1}{h}\int_\Omega \left(\dfrac{1}{q+1}|w|^{q+1}-|u_{m-1}|^{q-1}u_{m-1}w \right)\,dx+\dfrac{1}{2p}[w]_{W^{s,p}(\bR^n)}^p \notag\\[3mm]
&\geq \dfrac{1}{h} \int_\Omega \left(\dfrac{1}{2(q+1)}|w|^{q+1}-C(q)|u_{m-1}|^{q+1} \right)\,dx+\dfrac{1}{2p}[w]_{W^{s,p}(\bR^n)}^p \\[3mm]
&>-\infty. \notag
\end{align}
Let $\{u_k\}_{k=1}^\infty$ be a minimizing sequence, such that
\[
\inf \mathcal{F} \leq \mathcal{F} (u_k) < \inf \mathcal{F}+\dfrac{1}{k}
\]
for all $k \in \mathbb{N}$, where the infimum is taken for $w \in \mathcal{X} : = W^{s, p}_0 (\Omega) \cap L^{q + 1} (\Omega)$. Note that, for all $w \in \mathcal{X}$,
\[
- C \|u_{m - 1}\|_{L^{q + 1} (\Omega)}^{q + 1} \leq \inf\mathcal{F} \leq \mathcal{F} (v)  < \infty
\]
with some $v \in \mathcal{X}$ not empty, because $C^\infty_0 (\Omega) \subset \mathcal{X}$. We then observe that $\sup \limits_{k \in \mathbb{N}} \mathcal{F}(u_k) \leq \inf \mathcal{F}+1=:M<\infty$ and, by~\eqref{new eq.1 A1},   
\begin{equation}\label{new eq.2 A1}
\dfrac{1}{h} \int_\Omega \dfrac{1}{2(q+1)}|u_k|^{q+1}\,dx+\dfrac{1}{2p}[u_k]_{W^{s,p}(\bR^n)}^p \leq M+C\int_\Omega |u_{m-1}|^{q+1}\,dx,
\end{equation}
implying
\begin{equation}\label{new eq.2add A1}
\sup \limits_{k \in \mathbb{N}} \|u_k\|_{L^{q+1}(\Omega)}<\infty \quad ; \quad\sup \limits_{k \in \mathbb{N}}\,[u_k]_{W^{s, p} (\bR^n)}<\infty.
\end{equation}
Up to not relabelled subsequences, there exists a weak limit $u \in L^{q+1}(\Omega)$ such that
\begin{equation}\label{new eq.2'' A1}
u_k \to u \quad \textrm{weakly\,\,in}\quad L^{q+1}(\Omega)
\end{equation}
and thus, by the Banach-Steinhaus theorem and~\eqref{new eq.2add A1},
\begin{equation}\label{new eq.2' A1}
\|u\|_{L^{q+1}(\Omega)} \leq \liminf_{k \to \infty} \|u_k\|_{L^{q+1}(\Omega)}<\infty.
\end{equation}
%
%
%

Since $u_k \in W^{s,p}_0(\Omega)$, by the fractional Poincar\'{e} inequality~\eqref{e.Poincare} in Lemma~\ref{t.Poincare} we have, $\sup \limits_{k \in \mathbb{N}}\|u_k\|_{L^p(\bR^n)}<\infty$, in turn yielding with~\eqref{new eq.2add A1}
\begin{equation}\label{new eq.3add A1}
\sup \limits_{k \in \mathbb{N}}\|u_k\|_{W^{s,p}(\bR^n)}<\infty.
\end{equation}
Letting $s^\prime \equiv \min\{\frac{n}{2p},s\}$ impiles $0<s^\prime \leq s <1$. By Lemma~\ref{monotone},
\begin{equation*}
\sup_{k \in \mathbb{N}}\|u_k\|_{W^{s^\prime,p}(\bR^n)}<\infty.
\end{equation*}
and, for any ball $B_R$ satisfying $\Omega \subset B_R$
\begin{equation*}
\sup_{k \in \mathbb{N}}\|u_k\|_{W^{s^\prime,p}(B_R)}<\infty.
\end{equation*}
Since $s^\prime p \leq \frac{n}{2} <n$, we can apply Lemma~\ref{compact embedding} to get (after passing to a subsequence, if necessary)
\[
u_k \to u\quad \textrm{strongly\,\,in}\quad L^r(B_R)
\]
for all $r$ with $1\leq r<\frac{np}{n-s^\prime p}$ and 
\begin{equation}\label{new eq.3 A1}
u_k \to u \quad \textrm{a.e.\,\, in}\quad \Omega,
\end{equation}
which, in particular, yields $u=0$ in $\Omega^c$. From Fatou's Lemma with~\eqref{new eq.3 A1} and~\eqref{new eq.3add A1}, it follows that
\begin{equation}\label{new eq.4 A1}
\|u\|^p_{W^{s,p}(\bR^n)} \leq \liminf_{k \to \infty}\|u_k\|^p_{W^{s,p}(\bR^n)}<\infty.
\end{equation}
%
Consequently, by~\eqref{new eq.2' A1} and~\eqref{new eq.4 A1}, it verifies that $u \in W^{s,p}_0(\Omega) \cap L^{q+1}(\Omega)=\mathcal{X}$. 

Finally, in view of~\eqref{new eq.2' A1} and~\eqref{new eq.4 A1} again, we have
\begin{align*}
\inf_{w \in \mathcal{X}} \mathcal{F}(w)&=\liminf_{k \to \infty}\mathcal{F}(u_k) \\[3mm]
&= \liminf_{k \to \infty}\Bigg[\dfrac{1}{h}\int_\Omega \left(\dfrac{1}{q+1}|u_k|^{q+1}-|u_{m-1}|^{q-1}u_{m-1}u_k \right)\,dx+\dfrac{1}{2p}[u_k]^p_{W^{s,p}(\bR^n)}\Bigg] \\[3mm]
&\geq \dfrac{1}{h}\int_\Omega \left(\dfrac{1}{q+1}|u|^{q+1}-|u_{m-1}|^{q-1}u_{m-1}u \right)\,dx+\dfrac{1}{2p}[u]^p_{W^{s,p}(\bR^n)} \\[3mm]
&=\mathcal{F}(u), 
\end{align*}
that implies that $u \in \mathcal{X}$ is actually a minimizer of $\mathcal{F}$.

We now turn to compute the first variation of $\mathcal{F}$. Let $u_m \in \mathcal{F}$ be a minimizer. For $\varepsilon \in \bR$ and $\varphi \in \mathcal{X}$, we compute as
\begin{align}\label{new eq.a A1}
\dfrac{\mathcal{F}(u_m+\varepsilon \varphi)-\mathcal{F}(u_m)}{\varepsilon}&=\dfrac{1}{h}\dfrac{1}{q+1} \int_\Omega \dfrac{|u_m+\varepsilon\varphi|^{q+1}-|u_m|^{q+1}}{\varepsilon}\,dx \notag \\[3mm]
&-\dfrac{1}{h}\int_\Omega \dfrac{|u_{m-1}|^{q-1}u_{m-1}(u_m+\varepsilon \varphi)-|u_{m-1}|^{q-1}u_{m-1}u_m}{\varepsilon}\,dx \notag \\[3mm]
&+\dfrac{1}{2p}\dfrac{[u_m+\varepsilon\varphi]_{W^{s,p}(\bR^n)}^p-[u_m]^p_{W^{s,p}(\bR^n)}}{\varepsilon} \notag \\[3mm]
&=:I_1+I_2+I_3.
\end{align}
We clearly have
\begin{equation}\label{new eq.b A1}
I_2=-\dfrac{1}{h}\int_\Omega |u_{m-1}|^{q-1}u_{m-1}\varphi\,dx
\end{equation}
while, using the fundamental theorem of calculus, we find from H\"{o}lder's inequality with $u_m \in \mathcal{X}$ that, as $\varepsilon \to 0$,
\begin{align}\label{new eq.c A1}
I_1&=\dfrac{1}{h}\dfrac{1}{q+1}\int_\Omega \dfrac{1}{\varepsilon}\left(\,\int_0^1 \dfrac{d}{d\theta}\left|u_m+\theta \varepsilon \varphi \right|^{q+1}\,d\theta \right)\,dx \notag \\[3mm]
&=\dfrac{1}{h}\int_\Omega \left(\,\int_0^1 \left|u_m+\theta \varepsilon \varphi \right|^{q-1}(u_m+\theta \varepsilon \varphi)\,d\theta \right)\varphi\,dx \to \dfrac{1}{h}\int_\Omega |u_m|^{q-1}u_m\varphi\,dx.
\end{align}
Due to the fundamental theorem of calculus again,
\begin{align}\label{new eq.d A1}
I_3&=\dfrac{1}{2p}\iint_{\bR^n \times \bR^n} \dfrac{1}{\varepsilon}\dfrac{\left|(u_m+\varepsilon \varphi)(x)-(u_m+\varepsilon \varphi)(y)\right|^p-\left|u_m(x)-u_m(y)\right|^p}{|x-y|^{n+sp}}\,dxdy \notag\\[3mm]
&=\dfrac{1}{2p}\iint_{\bR^n \times \bR^n} \dfrac{1}{\varepsilon}\dfrac{1}{|x-y|^{n+sp}}\left(\int_0^1\dfrac{d}{d\theta} \Big|(u_m+\theta\varepsilon \varphi)(x)-(u_m+\theta\varepsilon \varphi)(y)\Big|^p\,d\theta\right)\,dxdy \notag \\[3mm]
&=\dfrac{1}{2}\iint_{\bR^n \times \bR^n} \dfrac{1}{|x-y|^{n+sp}}\left(\int_0^1U_m^{(\theta)}(x,y)\left(\varphi(x)-\varphi(y)\right)\,d\theta\right)\,dxdy
\end{align}
with short-hand notation as well as~\eqref{Uxy}
\[
U_m^{(\theta)}(x,y):=\Big|(u_m+\theta\varepsilon \varphi)(x)-(u_m+\theta\varepsilon \varphi)(y)\Big|^{p-2}\Big((u_m+\theta\varepsilon \varphi)(x)-(u_m+\theta\varepsilon \varphi)(y)\Big).
\]
The integrand of the right-hand side of~\eqref{new eq.d A1} is estimated as
\begin{align*}
&\dfrac{1}{|x-y|^{n+sp}}\left|\int_0^1U_m^{(\theta)}(x,y)\left(\varphi(x)-\varphi(y)\right)\,d\theta\right| \\[3mm]
& \quad \quad \quad \quad \quad \leq 2^{p-1}\left( \dfrac{\left|u_m(x)-u_m(y)\right|^{p-1}|\varphi(x)-\varphi(y)|}{|x-y|^{n+sp}} +\dfrac{\left|\varphi(x)-\varphi(y)\right|^p}{|x-y|^{n+sp}} \right)
\end{align*}
and, from the H\"{o}lder inequality, 
\begin{align*}
&\iint_{\bR^n\times \bR^n} C\left( \dfrac{\left|u_m(x)-u_m(y)\right|^{p-1}|\varphi(x)-\varphi(y)|}{|x-y|^{n+sp}} +\dfrac{\left|\varphi(x)-\varphi(y)\right|^p}{|x-y|^{n+sp}} \right)\,dxdy\\[3mm]
& \quad \quad \quad \quad \quad \leq C\left( [u_m]_{W^{s,p}(\bR^n)}^{p-1}[\varphi]_{W^{s,p}(\bR^n)}+[\varphi]_{W^{s,p}(\bR^n)}^p \right)<\infty.
\end{align*}
By the Lebesgue dominated convergence theorem, we pass to the limit $\varepsilon \to 0$ in~\eqref{new eq.d A1} to conclude that 
\begin{equation}\label{new eq.e A1}
\lim_{\varepsilon \to 0} I_3=\dfrac{1}{2}\iint_{\bR^n \times \bR^n}\dfrac{U_m(x,y)}{|x-y|^{n+sp}}\left(\varphi(x)-\varphi(y)\right)\,dxdy,
\end{equation}
where $U_m(x,y):=\left|u_m(x)-u_m(y)\right|^{p-2}\left(u_m(x)-u_m(y)\right)$ as in~\eqref{Uxy}.
Merging \eqref{new eq.b A1},~\eqref{new eq.c A1} and~\eqref{new eq.e A1} in~\eqref{new eq.a A1}, we therefore obtain from the minimality of $u_m$ of $\mathcal{F}$ in $\mathcal{X}$ that, for all $\varphi \in \mathcal{X}$,
\begin{align*}
0&=\lim_{\varepsilon \to 0}\dfrac{\mathcal{F}(u_m+\varepsilon \varphi)-\mathcal{F}(u_m)}{\varepsilon} \\[3mm]
&=\int_\Omega \dfrac{|u_m|^{q-1}u_m-|u_{m-1}|^{q-1}u_{m-1}}{h}\,\varphi\,dx+\dfrac{1}{2}\iint_{\bR^n \times \bR^n}\dfrac{U_m(x,y)}{|x-y|^{n+sp}}\left(\varphi(x)-\varphi(y)\right)\,dxdy,
\end{align*}
which finishes the proof of Lemma~\ref{existence of NE}.
\end{proof}
\section{Boundedness}\label{Appendix B}
In this Appendix~\ref{Appendix B}, by the additional stronger assumption that the boundedness of the initial datum $u_0$, we conclude slightly stronger assertions compared to ones in the previous section (see Lemmata~\ref{MP} and~\ref{convergence result MP} and Theorem~\ref{mainthm3}).
\subsection{Boundedness}\label{B}
In this Appendix~\ref{B}, we show the boundedness for approximate solutions of \eqref{maineq}.
The test function here is retrieved from~\cite{Alt-Luckhaus}.
\begin{lem}[Boundedness]\label{MP}
Suppose that $u_0 \in W^{s, p}_0 (\Omega) \cap L^\infty (\Omega)$.
Let $\bar{u}_h$,\,$u_h$, \,$\bar{w}_h$, \,$w_h$,\, $v_h$ and $\bar{v}_h$ be a  approximate solution of~\eqref{maineq} defined in~\eqref{approx. sol. for maineq.1} and~\eqref{approx. sol. for maineq.2}.
Then, these solutions are bounded in $\Omega_\infty$:
\begin{equation}\label{MPeq.}
\begin{split}
\sup_{t>0} \|\bar{u}_h(t)\|_{L^\infty(\Omega)}
&\leq \|u_0\|_{L^\infty(\Omega)}, \\
\sup_{t>0} \|u_h(t)\|_{L^\infty(\Omega)}
&\leq \|u_0\|_{L^\infty(\Omega)};
\end{split}
\end{equation}
\begin{equation}\label{MPeq.'}
\sup_{t>0} \|\bar{w}_h(t)\|_{L^\infty(\Omega)},\,\,\sup_{t>0} \|w_h(t)\|_{L^\infty(\Omega)}\leq \|u_0\|_{L^\infty(\Omega)}^{\frac{q+1}{2}} ; 
\end{equation}
\begin{equation}\label{MPeq.''}
\sup_{t>0} \|\bar{v}_h(t)\|_{L^\infty(\Omega)},\,\,\sup_{t>0} \|v_h(t)\|_{L^\infty(\Omega)}
\leq \|u_0\|_{L^\infty(\Omega)}^q.
\end{equation}
\end{lem}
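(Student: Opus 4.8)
The plan is to prove first the pointwise bound $\|u_m\|_{L^\infty(\Omega)}\le M:=\|u_0\|_{L^\infty(\Omega)}$ for every $m\in\mathbb N$ by induction on $m$, and then to read off \eqref{MPeq.}, \eqref{MPeq.'} and \eqref{MPeq.''} directly from the definitions \eqref{approx. sol. for maineq.1}--\eqref{approx. sol. for maineq.2}. Indeed $\bar u_h$ is a step function whose values are among the $u_m$, while $u_h$ is a convex combination of two consecutive $u_m$, so both have $L^\infty(\Omega)$-norm at most $M$; likewise $\bar w_h$ and $w_h$ are a step function, resp.\ a convex combination, built from the quantities $|u_m|^{\frac{q-1}{2}}u_m$, whose moduli equal $|u_m|^{\frac{q+1}{2}}\le M^{\frac{q+1}{2}}$, and $\bar v_h$, $v_h$ are built from $|u_m|^{q-1}u_m$, whose moduli equal $|u_m|^q\le M^q$. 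So everything reduces to the induction step for the $u_m$.

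The base case $m=0$ is trivial. For the inductive step, assuming $\|u_{m-1}\|_{L^\infty(\Omega)}\le M$, I would test the weak formulation \eqref{weak form NE} with the Alt--Luckhaus truncation $\xi=(u_m-M)_+$ (cf.\ \cite{Alt-Luckhaus}). This is an admissible test function: since $M\ge 0$ and $u_m=0$ a.e.\ in $\Omega^c$ we have $\xi=0$ a.e.\ in $\Omega^c$; the map $t\mapsto(t-M)_+$ is $1$-Lipschitz, hence $[\xi]_{W^{s,p}(\bR^n)}\le[u_m]_{W^{s,p}(\bR^n)}$; and $0\le\xi\le|u_m|$ gives $\xi\in L^p(\bR^n)\cap L^{q+1}(\Omega)$. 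Thus $\xi\in W^{s,p}_0(\Omega)\cap L^{q+1}(\Omega)$, and all terms in \eqref{weak form NE} are finite (using $|u_{m-1}|^{q-1}u_{m-1}\in L^\infty(\Omega)$ from the induction hypothesis).

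The key point is that this choice of $\xi$ makes both terms of \eqref{weak form NE} nonnegative. For the diffusion term, since $t\mapsto(t-M)_+$ is nondecreasing, the differences $u_m(x)-u_m(y)$ and $(u_m(x)-M)_+-(u_m(y)-M)_+$ always have the same sign, so the integrand
\[
|u_m(x)-u_m(y)|^{p-2}\big(u_m(x)-u_m(y)\big)\big((u_m(x)-M)_+-(u_m(y)-M)_+\big)\ge 0
\]
pointwise and the fractional double integral is $\ge 0$. For the discrete time-difference term, on the set $\{u_m>M\}$ we have $|u_m|^{q-1}u_m=u_m^q$, while $|u_{m-1}|^{q-1}u_{m-1}\le M^q$ because $t\mapsto|t|^{q-1}t$ is increasing and $u_{m-1}\le M$; hence
\[
\int_\Omega\frac{|u_m|^{q-1}u_m-|u_{m-1}|^{q-1}u_{m-1}}{h}\,(u_m-M)_+\,dx\ \ge\ \frac1h\int_{\{u_m>M\}}\big(u_m^q-M^q\big)(u_m-M)\,dx\ \ge\ 0.
\]
Inserting both estimates into \eqref{weak form NE} forces the last integral to vanish; its integrand is strictly positive on $\{u_m>M\}$ (since $t\mapsto t^q$ is strictly increasing on $[0,\infty)$ and $u_m>M\ge0$ there), so $|\{u_m>M\}|=0$, i.e.\ $u_m\le M$ a.e.\ in $\Omega$. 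The lower bound $u_m\ge -M$ follows by running the same computation for $-u_m$, which is the weak solution of \eqref{NE} with $u_{m-1}$ replaced by $-u_{m-1}$ and $u_0$ by $-u_0$ (the same $L^\infty$-norm), or equivalently by testing \eqref{weak form NE} with $-(-u_m-M)_+$. This closes the induction and yields Lemma~\ref{MP}.

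This is a nonlocal Stampacchia-type truncation, so no genuinely deep obstacle is expected; the two points that need a little care are checking that $(u_m-M)_+\in W^{s,p}_0(\Omega)\cap L^{q+1}(\Omega)$ and confirming the sign of the nonlocal term — both immediate from the monotonicity and Lipschitz-ness of $t\mapsto(t-M)_+$ — while keeping the $L^\infty$-bound on $u_{m-1}$ available at each step so that $|u_{m-1}|^{q-1}u_{m-1}\le M^q$ can be invoked.
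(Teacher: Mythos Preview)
Your argument is correct and follows the same Stampacchia-type truncation mechanism as the paper: test with a monotone cut-off of $u_m$, use monotonicity to make the nonlocal term nonnegative, and force the level-set integral to vanish. The execution differs slightly. The paper tests with the regularized odd cut-off
\[
\xi_\delta(\sigma)=\frac{\sigma}{|\sigma|}\min\Big\{1,\frac{(|\sigma|-M)_+}{\delta}\Big\},
\]
which treats the upper and lower bounds simultaneously, then passes $\delta\searrow 0$ and derives the iteration inequality $\int_\Omega(|u_m|^q-M^q)_+\,dx\le\int_\Omega(|u_{m-1}|^q-M^q)_+\,dx$ before telescoping down to $m=0$. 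Your direct choice $\xi=(u_m-M)_+$ (and its odd counterpart) avoids the $\delta$-limit entirely and is a bit cleaner; the price is that you must invoke the induction hypothesis $|u_{m-1}|\le M$ to obtain $|u_{m-1}|^{q-1}u_{m-1}\le M^q$, whereas the paper's iteration inequality holds without assuming anything on $u_{m-1}$. In the present setting both routes are equivalent and equally short.
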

\begin{proof}
The estimations~\eqref{MPeq.}$_2$,~\eqref{MPeq.'} and~\eqref{MPeq.''} simply follow from~\eqref{MPeq.}$_1$ and Lemma~\ref{elementary est.}. Following the argument developed in~\cite[Lemma 3.3, p.164]{Nakamura-Misawa}, we shall prove ~\eqref{MPeq.}$_1$. Put $M:=\|u_0\|_{L^\infty(\Omega)}$ for brevity.
In~\eqref{weak form NE}, choose a testing function $\xi_\delta (u_m)$ with 
\[
\xi_\delta(\sigma)=\dfrac{\sigma}{|\sigma|}\min\left\{1,\,\frac{(|\sigma|-M)_{+}}{\delta} \right\}\quad \textrm{for}\quad \delta>0.
\]
Note that $\xi_\delta (\sigma)$ is clearly bounded and Lipschitz function for $\sigma \in \bR$ and thus, $\xi_\delta (u_m) \in W^{s, p}_0(\Omega) \cap L^\infty (\Omega)$.
\begin{figure}[h]
\begin{tikzpicture}[domain=-3:2, samples=70, very thick,scale=1.1]
\draw[thin, ->] (-5.5,0)--(4.5,0) node[right] {$\sigma$}; 
\draw[thin, ->] (-0.5,-1.7)--(-0.5,1.8) node[above] {\footnotesize$y$}; 
\draw [domain=0:2]plot(\x, {0.6 *\x}); 
\draw (4.5,1.2)node[above] {\footnotesize $y=\xi_\delta(\sigma)$};
\draw (0.3,0)node[below]{\footnotesize $M$};
\filldraw(0,0) circle (0.03);
\filldraw[very thick](-0.5, 0)--(0,0);
\filldraw[very thick](2, 1.2)--(4,1.2);
\filldraw(2,0) circle (0.03);
\draw (2.5,0)node[below]{\footnotesize $M+\delta$};
\draw[densely dotted] (2,0) -- (2,1.2);
\draw[densely dotted] (-0.5,1.2) -- (4,1.2);
\draw (-0.8,1.2)node {\footnotesize$1$};
\draw (-1.1,0)node[above]{\footnotesize $-M$};
\draw [domain=-3:-1]plot(\x, {0.6 *(\x+1)}); 
\filldraw(-1,0) circle (0.03);
\filldraw[very thick](-1, 0)--(-0.5,0);
\filldraw[very thick](-3.0, -1.2)--(-5.0,-1.2);
\filldraw(-3.0,0) circle (0.03);
\draw (-3,0)node[above]{\footnotesize $-(M+\delta)$};
\draw[densely dotted] (-3.0,0) -- (-3.0,-1.2);
\draw[densely dotted] (-0.5,-1.2) -- (-4.5,-1.2);
\draw (-0.8,-1.6)node {\footnotesize$-1$};
\end{tikzpicture}
\caption{Graph of $y=\xi_\delta(\sigma)$}
\end{figure}
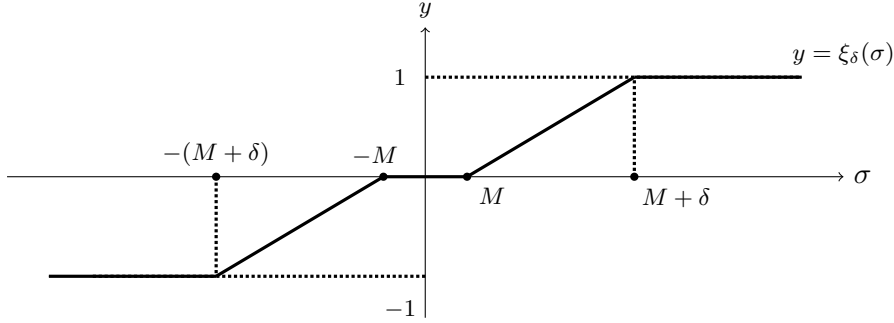

\noindent
Then, for $m=1,2,\ldots$, we have
\begin{align}\label{MP eq.1}
\frac{1}{h}\int_\Omega &\left(|u_{m}|^{q-1}u_{m}-|u_{m-1}|^{q-1}u_{m-1} \right)\xi_\delta(u_m)\,dx\notag \\
&+\frac{1}{2}\iint_{\bR^n \times \bR^n}\frac{U_m(x,y)}{|x-y|^{n+sp}}\big(\xi_\delta(u_m(x))-\xi_\delta(u_m(y))\big) \,dxdy=0.
\end{align}
The integrand in the fractional integral in~\eqref{MP eq.1} is evaluated as follows:
Since $\xi_\delta(u_m(x))-\xi_\delta(u_m(y))$ has the same sign
as $u_m(x)-u_m(y)$ via the monotonicity of $\xi_\delta(\sigma)$,
we observe
\begin{align*}
& U_m(x,y)\bigl(\xi_\delta(u_m(x))-\xi_\delta(u_m(y))\bigr) \\
&=|u_m(x)-u_m(y)|^{p-2} \bigl(u_m(x)-u_m(y)\bigr)
\bigl(\xi_\delta(u_m(x))-\xi_\delta(u_m(y))\bigr) \geq 0
\end{align*}
on $\bR^n \times \bR^n$.

Dropping the fractional term of \eqref{MP eq.1} implies
\begin{align}\label{MP eq.2}
&\int_\Omega \frac{|u_m|^q-|u_{m-1}|^q }{h}\min\left\{1,\,\frac{(|u_m|-M)_+}{\delta} \right\}\,dx \leq 0.
\end{align}
By Lebesgue's dominated convergence theorem
and passage to the limit $\delta \searrow 0$ in~\eqref{MP eq.2} we have 
\begin{equation}\label{MP eq.3}
\int_{\Omega \,\cap\,\{ |u_{m}|>M\}} \frac{|u_m|^q-|u_{m-1}|^q }{h}\,dx \leq 0,
\end{equation}
which is equivalent to
\[ \int_\Omega (|u_m|^q-M^q)_+ \,dx
\leq \int_{\Omega\cap\{|u_m|>M\}} (|u_{m-1}|^q-M^q) \,dx. \]
By neglecting the region such that $|u_{m-1}|\leq M \leq |u_m|$,
the right-hand side is further estimated as
\begin{align*}
\int_{\Omega\cap\{|u_m|>M\}} (|u_{m-1}|^q-M^q) \,dx
&\leq \int_{\Omega\cap\{|u_m|>M\}\cap\{|u_{m-1}|>M\}} (|u_{m-1}|^q-M^q) \,dx \\
&\leq \int_\Omega (|u_{m-1}|^q-M^q)_+ \,dx.
\end{align*}
Summarizing, we have 
\[
\int_\Omega (|u_m|^q-M^q)_+ \,dx \leq \int_\Omega (|u_{m-1}|^q-M^q)_+ \,dx.
\]
Iterating the above display with respect to $m$, we obtain
\[ \int_\varOmega (|u_m|^q-M^q)_+ \,dx
\leq \int_\varOmega (|u_0|^q-M^q)_+ \,dx =0 \]
for all $m=1,2,\ldots$, which in turn implies $|u_{m}|\leq M$ in $\Omega$ for any $m=1,2,\ldots$, finishing the proof.
\end{proof}
By the boundedness~Lemma~\ref{MP}, the convergence result in Lemma~\ref{convergence result} becomes better as the following.
\begin{lem}[Convergence of approximate solutions]\label{convergence result MP}
Let $\bar {u}_h$, \,$u_h$, \,$\bar{w}_h$, \,$w_h$, $\bar{v}_h$ and $v_h$  be the approximate solutions of~\eqref{maineq} defined by~\eqref{approx. sol. for maineq.1} and~\eqref{approx. sol. for maineq.2}, satisfying the convergence in Lemma~\ref{convergence result}. For all finite exponent $\gamma \geq 1$, any bounded domain $K \subset \bR^n$ and every positive number $T<\infty$,
\begin{align}
u_h,\,\bar{u}_h \to u \quad &\textrm{strongly\,\,in}\,\,L^\gamma(K_T),
\label{conv. MP1} \\
w_h,  {\bar w}_h \to |u|^{\frac{q - 1}{2}} u
\quad &\textrm{strongly\,\,in}\,\,L^\gamma(K_T),
\label{conv. MP2} \\
v_h,\,\bar{v}_h \to |u|^{q-1}u \quad &\textrm{strongly\,\,in}\,\,L^\gamma(K_T)
\label{conv. MP3} 
\end{align}
as $h\searrow 0$.
\end{lem}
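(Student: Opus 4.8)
The plan is to promote the convergences already established in Lemmata~\ref{convergence result0} and~\ref{convergence result} to strong convergence in \emph{every} $L^\gamma(K_T)$ with $\gamma<\infty$, using the uniform $L^\infty$-bounds supplied by Lemma~\ref{MP}. The engine is the ``in particular'' clause of Lemma~\ref{convergence lemma a} (Vitali's theorem): a family bounded in some $L^a(K_T)$ that converges almost everywhere necessarily converges strongly in $L^b(K_T)$ for every $b<a$.

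First I would record what Lemma~\ref{MP} gives. By \eqref{MPeq.}, \eqref{MPeq.'} and \eqref{MPeq.''} the families $\{\bar u_h,u_h\}$, $\{\bar w_h,w_h\}$, $\{\bar v_h,v_h\}$ are bounded in $L^\infty(\Omega_\infty)$ by $\|u_0\|_{L^\infty(\Omega)}$, $\|u_0\|_{L^\infty(\Omega)}^{(q+1)/2}$, $\|u_0\|_{L^\infty(\Omega)}^{q}$ respectively, uniformly in $h$; hence each of the six families is bounded in $L^a(K_T)$ for every finite $a\ge1$, every bounded $K\subset\bR^n$ and every finite $T$. Letting $h\searrow0$ in $|\bar u_h|\le\|u_0\|_{L^\infty(\Omega)}$ along the a.e.-convergent subsequence of \eqref{conv. 3}, we also obtain $|u|\le\|u_0\|_{L^\infty(\Omega)}$ a.e., so $u$, $|u|^{\frac{q-1}{2}}u$ and $|u|^{q-1}u$ lie in $L^\infty(\Omega_\infty)$ too, and therefore the differences $u_h-u$, $\bar w_h-|u|^{\frac{q-1}{2}}u$, $v_h-|u|^{q-1}u$, and their barred/unbarred companions, are themselves bounded in $L^a(K_T)$ for every finite $a$.

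Next I would fix a single subsequence along which all six families converge a.e.\ in $K_T$ to the respective limits: \eqref{conv. 3} of Lemma~\ref{convergence result0} gives $u_h,\bar u_h\to u$ a.e., while \eqref{conv. a} and \eqref{conv. 4} of Lemma~\ref{convergence result} give strong $L^\beta$-convergence of $w_h,\bar w_h$ to $|u|^{\frac{q-1}{2}}u$ and strong $L^\alpha$-convergence of $v_h,\bar v_h$ to $|u|^{q-1}u$, whence, after extracting a further (not relabelled) subsequence, a.e.\ convergence of all six simultaneously. Then, for an arbitrary finite $\gamma\ge1$, I would apply the second assertion of Lemma~\ref{convergence lemma a} with $a:=\gamma+1$ and $b:=\gamma$ to $f_h\in\{u_h-u,\ \bar u_h-u\}$, then to $\{w_h-|u|^{\frac{q-1}{2}}u,\ \bar w_h-|u|^{\frac{q-1}{2}}u\}$, then to $\{v_h-|u|^{q-1}u,\ \bar v_h-|u|^{q-1}u\}$: each such family is bounded in $L^{\gamma+1}(K_T)$ and tends to $0$ a.e.\ in $K_T$, so $\iint_{K_T}|f_h|^{\gamma}\,dxdt\to0$. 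Since $\gamma<\infty$ is arbitrary, this is exactly \eqref{conv. MP1}, \eqref{conv. MP2} and \eqref{conv. MP3}.

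I do not anticipate a genuine obstacle: the entire point of the boundedness hypothesis is that it makes the uniform integrability of $|f_h|^{\gamma}$ automatic for \emph{all} finite $\gamma$, so the argument is short. The only points requiring a word of care are the simultaneous a.e.\ convergence of the interpolated families $w_h,v_h$ (obtained either by extracting a further subsequence from the strong $L^\beta,L^\alpha$ convergence of Lemma~\ref{convergence result}, or via the bound $\|w_h-\bar w_h\|_{L^2(\Omega_T)}\le Ch$ from \eqref{2} in Lemma~\ref{elementary est.} together with Lemma~\ref{L2-est.}), and the membership of $u$, $|u|^{\frac{q-1}{2}}u$, $|u|^{q-1}u$ in $L^\infty$, both routine.
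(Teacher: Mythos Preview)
Your argument is correct and is essentially the paper's: both exploit the $L^\infty$-bounds of Lemma~\ref{MP} to upgrade the convergences of Lemmata~\ref{convergence result0} and~\ref{convergence result} to every finite $L^\gamma$. The only stylistic difference is that the paper interpolates via H\"older's inequality directly from the already-known $L^{\gamma_0}$-convergences \eqref{conv. 2}, \eqref{conv. a}, \eqref{conv. 4} together with the uniform $L^\infty$-bounds, whereas you route through a.e.\ convergence and the Vitali-type Lemma~\ref{convergence lemma a}. The paper's route has the minor advantage that for $w_h$ and $v_h$ no separate a.e.\ convergence (and hence no further subsequence extraction) is needed: the estimate
\[
\|w_h-|u|^{\frac{q-1}{2}}u\|_{L^\gamma(K_T)}\le \big(\|w_h\|_{L^\infty}+\||u|^{\frac{q-1}{2}}u\|_{L^\infty}\big)^{1-\frac{\beta}{\gamma}}\,\|w_h-|u|^{\frac{q-1}{2}}u\|_{L^\beta(K_T)}^{\frac{\beta}{\gamma}}
\]
(for $\gamma\ge\beta$) already gives the conclusion from \eqref{conv. a} along the original subsequence. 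Your treatment of this point (either a further subsequence or the $O(h)$ bound from \eqref{2} and Lemma~\ref{L2-est.}) is also fine.
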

\begin{proof}
From the boundedness~\eqref{MPeq.} and~\eqref{conv. 3} it follows that
\begin{equation}\label{MP, proof}
\sup_{0<t<T}\|u(t)\|_{L^\infty(K)} \leq \|u_0\|_{L^\infty(\Omega)}.
\end{equation}
The boundedness~\eqref{MPeq.},~\eqref{MP, proof} and the convergence~\eqref{conv. 2} with the H\"older inequality gives~\eqref{conv. MP1}$_1$. The validity of other statements~\eqref{conv. MP1}$_2$,~\eqref{conv. MP2} and~\eqref{conv. MP3} is shown by use of the boundedness Lemma~\ref{MP},~\eqref{conv. a} and~\eqref{conv. 4}.
\end{proof}
\subsection{A regularity of $\partial_t\left(|u|^{q-1}u\right)$ in the case $q \geq 1$}\label{MP, theorem}
Here we state the regularity on the time-derivative, obtained from the boundedness Lemma~\ref{MP}.
\begin{thm}\label{mainthm3}
Let $q \geq 1$. Suppose that the initial datum $u_0 \in W^{s, p}_0 (\Omega) \cap L^\infty(\Omega)$. Then, the weak solution $u$ of~\eqref{maineq} obtained in Theorem~\ref{maineq} satisfies $\partial_t (|u|^{q - 1} u) \in L^2 (\Omega_\infty)$ and the solution $u$ itself satisfies~\eqref{maineq} almost everywhere in $\Omega_\infty$, as an $L^2$-function.
\end{thm}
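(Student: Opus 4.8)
To prove Theorem~\ref{mainthm3}, I would revisit the energy estimates of Section~\ref{Sect. 3} under the extra hypothesis $u_0 \in L^\infty(\Omega)$, replacing the delicate truncation machinery by the pointwise bound of Lemma~\ref{MP}. The key new ingredient is an $L^2$-bound for $\partial_t v_h$ (rather than the mere $L^1$-bound of Lemma~\ref{L2-est.}\eqref{(ii)}), which, combined with the convergence $v_h \to |u|^{q-1}u$ a.e.\ (Lemma~\ref{convergence result MP}), yields $\partial_t(|u|^{q-1}u) \in L^2(\Omega_\infty)$ by weak compactness and the usual distributional identification argument already carried out for $w_h$ in the proof of Theorem~\ref{mainthm} (see~\eqref{Proof eq.1}--\eqref{Proof eq.3}).

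\textbf{Step 1: An $L^2$-estimate for $\partial_t v_h$ when $q\ge1$.} Write, for $(x,t)\in\bR^n\times[t_{m-1},t_m]$,
\[
|\partial_t v_h| = \frac{\left||u_m|^{q-1}u_m - |u_{m-1}|^{q-1}u_{m-1}\right|}{h} \le C(q)\big(|u_m|+|u_{m-1}|\big)^{q-1}\frac{|u_m-u_{m-1}|}{h}
\]
by~\eqref{Alg1} in Lemma~\ref{Algs} with $\alpha=q+1$. Using Lemma~\ref{MP}, $\big(|u_m|+|u_{m-1}|\big)^{q-1}\le (2\|u_0\|_{L^\infty(\Omega)})^{q-1}$, so
\[
|\partial_t v_h|^2 \le C(q)\|u_0\|_{L^\infty(\Omega)}^{q-1}\big(|u_m|+|u_{m-1}|\big)^{q-1}|\partial_t u_h|^2 .
\]
Integrating over $\Omega_T$ and invoking~\eqref{energy est. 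NE3} in Lemma~\ref{energy est. NE} gives
\[
\iint_{\Omega_T}|\partial_t v_h|^2\,dxdt \le C(p,q)\|u_0\|_{L^\infty(\Omega)}^{q-1}[u_0]_{W^{s,p}(\bR^n)}^p,
\]
uniformly in $h$. Hence $\{\partial_t v_h\}_{h>0}$ is bounded in $L^2(\Omega_\infty)$.

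\textbf{Step 2: Identification of the weak limit and the equation a.e.} By Step 1 there is a subsequence with $\partial_t v_h \rightharpoonup \tilde\omega$ weakly in $L^2(\Omega_\infty)$; passing to the limit in $\iint_{\Omega_\infty}\partial_t v_h\,\varphi\,dxdt = -\iint_{\Omega_\infty} v_h\,\partial_t\varphi\,dxdt$ for $\varphi\in C_0^\infty(\Omega_\infty)$ and using $v_h\to|u|^{q-1}u$ in $L^1_{\mathrm{loc}}$ (Lemma~\ref{convergence result MP}) identifies $\tilde\omega = \partial_t(|u|^{q-1}u)\in L^2(\Omega_\infty)$, exactly as in~\eqref{Proof eq.1}--\eqref{Proof eq.3}. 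Finally, since $u$ is already a weak solution (Theorem~\ref{mainthm}), the integral identity (D2) now reads, for every $\varphi\in\mathscr T$,
\[
\iint_{\Omega_T}\partial_t(|u|^{q-1}u)\,\varphi\,dxdt + \frac12\int_0^T\iint_{\bR^n\times\bR^n}\frac{U(x,y,t)}{|x-y|^{n+sp}}(\varphi(x,t)-\varphi(y,t))\,dxdydt = 0,
\]
and since $(-\Delta)_p^s u(\cdot,t)\in W^{-s,p'}(\Omega)$ for a.e.\ $t$ with the fractional bilinear form acting as a bounded functional, a density argument in $\varphi$ (together with $\partial_t(|u|^{q-1}u)\in L^2$) shows $\partial_t(|u|^{q-1}u) + (-\Delta)_p^s u = 0$ as an identity in $W^{-s,p'}(\Omega)$ for a.e.\ $t$, i.e.\ $u$ solves~\eqref{maineq}$_1$ almost everywhere in $\Omega_\infty$ as an $L^2$-function (the boundary and initial conditions being already part of Theorem~\ref{mainthm}).

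\textbf{Main obstacle.} The only genuinely new analytic point is Step 1—the uniform $L^2$-bound for $\partial_t v_h$—and it hinges entirely on the $L^\infty$-bound of Lemma~\ref{MP} to control the factor $(|u_m|+|u_{m-1}|)^{q-1}$; without boundedness one only gets the weaker $L^1$-estimate of Lemma~\ref{L2-est.}, which is why the hypothesis $u_0\in L^\infty(\Omega)$ is essential here. Step 2 is then a routine repetition of the weak-compactness/distributional-identification argument already executed for $w_h$ in the proof of Theorem~\ref{mainthm}, so I do not anticipate any difficulty there beyond bookkeeping.
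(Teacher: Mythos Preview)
Your proposal is correct and follows essentially the same route as the paper: the paper isolates your Step~1 as Lemma~\ref{L2-est'}, obtaining the uniform bound $\iint_{\Omega_T}|\partial_t v_h|^2\,dxdt \le C\|u_0\|_{L^\infty(\Omega)}^{q-1}[u_0]_{W^{s,p}(\bR^n)}^p$ by exactly your computation (split $(|u_m|+|u_{m-1}|)^{2(q-1)}$ as a product of two $(q-1)$-powers, bound one by the $L^\infty$-estimate of Lemma~\ref{MP}, and absorb the other with~\eqref{energy est. NE3}), and then carries out your Step~2 verbatim via weak $L^2$-compactness and the distributional identification $\vartheta=\partial_t(|u|^{q-1}u)$ using~\eqref{conv. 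MP3}. If anything, your Step~2 is slightly more explicit than the paper's about why the equation holds a.e.\ as an identity in $W^{-s,p'}(\Omega)$, which the paper merely asserts.
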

\medskip

We first derive the $L^2$-estimate of the time-derivative:
\begin{lem}\label{L2-est'}
Let $q \geq 1$ and $v_h$ be the approximate solutions of~\eqref{maineq} defined by~\eqref{approx. sol. for maineq.2}.  Assume that $u_0 \in W^{s, p}_0 (\Omega) \cap L^\infty (\Omega)$. Then the time derivatives of approximate solutions $\{\partial_t v_h\}$ are bounded in $L^2 (\Omega_T)$ for any positive $T < \infty$, 
\begin{equation}\label{(iii)}
\iint_{\Omega_T} |\partial_t v_h|^2\,dxdt \leq C \|u_0\|_{L^\infty(\Omega)}^{q-1} [u_0]_{W^{s,p}(\bR^n)}^p
\end{equation}
with a positive constant $C=C(p,q)$.
\end{lem}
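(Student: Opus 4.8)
The plan is to mimic the structure of the proof of~\eqref{energy est. NE3} in Lemma~\ref{energy est. NE} together with~\eqref{(i)} in Lemma~\ref{L2-est.}, but now exploiting the uniform bound $\|\bar u_h(t)\|_{L^\infty(\Omega)}\le\|u_0\|_{L^\infty(\Omega)}$ from Lemma~\ref{MP}. First I would recall the elementary identity $\partial_t v_h = \dfrac{|u_m|^{q-1}u_m-|u_{m-1}|^{q-1}u_{m-1}}{h}$ on $\Omega\times(t_{m-1},t_m]$, so that, by the algebraic inequality~\eqref{Alg1} in Lemma~\ref{Algs} applied with $\alpha=q+1$,
\[
\left||u_m|^{q-1}u_m-|u_{m-1}|^{q-1}u_{m-1}\right|\le C(q)\big(|u_m|+|u_{m-1}|\big)^{q-1}|u_m-u_{m-1}|
\]
pointwise on $\Omega$. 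Squaring, dividing by $h^2$, and using $q\ge 1$ so that the exponent $q-1\ge 0$ is admissible, I would bound the factor $(|u_m|+|u_{m-1}|)^{2(q-1)}$ by splitting it as $(|u_m|+|u_{m-1}|)^{q-1}\cdot(|u_m|+|u_{m-1}|)^{q-1}$ and estimating the second copy by $\big(2\|u_0\|_{L^\infty(\Omega)}\big)^{q-1}$ via Lemma~\ref{MP}.

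Next I would integrate over $\Omega$ and sum $h\sum_{m=1}^N$, which by the definitions~\eqref{approx. sol. for maineq.1}--\eqref{approx. sol. for maineq.2} of $\bar u_h,u_h$ gives
\begin{align*}
\iint_{\Omega_T}|\partial_t v_h|^2\,dxdt
&\le C(q)\,2^{q-1}\|u_0\|_{L^\infty(\Omega)}^{q-1}\sum_{m=1}^N h\int_\Omega\big(|u_m|+|u_{m-1}|\big)^{q-1}\left|\tfrac{u_m-u_{m-1}}{h}\right|^2dx\\
&= C(p,q)\|u_0\|_{L^\infty(\Omega)}^{q-1}\iint_{\Omega_T}\big(|\bar u_h(x,t)|+|\bar u_h(x,t-h)|\big)^{q-1}|\partial_t u_h(x,t)|^2\,dxdt,
\end{align*}
where $N$ is chosen with $t_{N-1}<T\le t_N$. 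The last double integral is exactly the left-hand side of~\eqref{energy est. NE3} in Lemma~\ref{energy est. NE}, which is bounded by $C[u_0]_{W^{s,p}(\bR^n)}^p$. Combining these two displays yields~\eqref{(iii)} with $C=C(p,q)$, completing the proof.

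I do not anticipate a genuine obstacle here: the only subtlety is the bookkeeping with the exponent $q-1$, where one must separate one factor of $(|u_m|+|u_{m-1}|)^{q-1}$ to feed into the already-established energy estimate~\eqref{energy est. NE3} and absorb the other factor into the $L^\infty$ bound — this is precisely where the hypothesis $q\ge 1$ (ensuring $(|u_m|+|u_{m-1}|)^{q-1}$ is a well-defined nonnegative weight, not a singular one) and the extra assumption $u_0\in L^\infty(\Omega)$ are used. One should also note the minor point that $h<1$ may be assumed, but in fact it is not needed here since no factor of $h^{q-1}$ appears, in contrast with the case $0<q<1$ treated in Lemma~\ref{truncate energy}.
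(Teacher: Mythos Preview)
Your proof is correct and follows essentially the same approach as the paper: both apply the algebraic inequality~\eqref{Alg1} with $\alpha=q+1$, split the resulting factor $(|u_m|+|u_{m-1}|)^{2(q-1)}$ into two copies of $(|u_m|+|u_{m-1}|)^{q-1}$, absorb one copy using the $L^\infty$ bound from Lemma~\ref{MP}, and control the remaining weighted integral by the energy estimate~\eqref{energy est. NE3}.
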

\begin{proof}[\normalfont \textbf{Proof of Lemma~\ref{L2-est'}}]
From the energy estimate~\eqref{energy est. NE3} in Lemma~\ref{energy est. NE} and~\eqref{MPeq.} in Lemma~\ref{MP}, we obtain
\begin{align*}
\sum_{m=1}^N h \int_\Omega |\partial_t v_h|^2\,dx
&\leq C \sum_{m=1}^N h \int_\Omega
\left(|u_m| + |u_{m - 1}|\right)^{2 (q - 1)} 
\left|\frac{u_m-u_{m-1}}{h}\right|^2\,dx \\[2mm]
&\leq C \sum_{m=1}^N h
\left(%
\|u_m\|_{L^\infty(\Omega)}^{q-1}+\|u_{m-1}\|_{L^\infty(\Omega)}^{q-1}
\right) \\
&\quad \quad \quad \quad\quad \quad \quad\quad\times \int_\Omega \left(|u_m| + |u_{m - 1}|\right)^{q - 1}\left|\frac{u_m-u_{m-1}}{h} \right|^2\,dx \\[2mm]
&\!\!\!\stackrel{\eqref{MPeq.}}{\leq}C\|u_0\|_{L^\infty(\Omega)}^{q-1}\int_0^{t_N}\int_{\Omega} \left(|{\bar u}_h (x,t)| 
+ |{\bar u}_h (x,t-h)|\right)^{q - 1}
|\partial_t u_h(x,t) |^2\,dxdt \\[2mm]
&\!\!\!\!\stackrel{\eqref{energy est. NE3}}{\leq} C\|u_0\|_{L^\infty (\Omega)}^{q-1}[u_0]_{W^{s,p}(\bR^n)}^p,
\end{align*}
which finishes the proof of~\eqref{(iii)}.
\end{proof}
We report the proof of Theorem~\ref{mainthm3}.
\begin{proof}[\normalfont \textbf{Proof of Theorem~\ref{mainthm3}}] Let $q \geq 1$. By~\eqref{(iii)} in Lemma~\ref{L2-est'}, $\{\partial_t v_h\}_{h>0}$ is bounded in $L^2 (\Omega_\infty)$ and thus, there are a subsequence $\{\partial_t v_h\}_{h>0}$ (also labelled with $h$) and a limit function $\vartheta \in L^2(\Omega_\infty)$ such that, as $h \searrow 0$,
%
%
\begin{equation}\label{MP, proof2}
\partial_t v_h \to \vartheta \quad \textrm{weakly\,\,in}\,\,\,L^2(\Omega_\infty).
\end{equation}
By the convergence~\eqref{conv. MP3} in Lemma~\ref{convergence result MP} we pass to the limit as $h \searrow 0$ in the identity
\begin{equation*}
\iint_{\Omega_\infty}\partial_t v_h\cdot \varphi\,dxdt =-\iint_{\Omega_\infty}v_h\,\partial_t \varphi\,dxdt
\end{equation*}
for every $\varphi \in C^\infty_0(\Omega_\infty)$ to obtain from~\eqref{vh} that
\begin{equation}\label{MP, proof3}
\vartheta=\partial_t(|u|^{q-1}u)\quad \textrm{in}\,\,L^2(\Omega_\infty).
\end{equation}
As a result, by the convergence~\eqref{conv. MP3} in Lemma~\ref{convergence result MP} with~\eqref{(iii)} in Lemma~\ref{L2-est'}, there holds that
\begin{equation*}
\left\|\partial_t (|u|^{q-1}u)\right\|_{L^2(\Omega_T)}^2 \leq C\|u_0\|_{L^\infty(\Omega)}^{q-1}[u_0]_{W^{s,p}(\bR^n)}^p,
\end{equation*}
which finishes the proof of Theorem~\ref{mainthm3}.

\end{proof}
\section{A space-time fractional Sobolev inequality}\label{Appendix C}
In this appendix, 
we accomplish in Lemma~\ref{FSineq.II} a general inequality interpolating by the Gagliardo seminorm in time-space domain in the special case where the function is weakly differentiable in time direction. The
inequality in Lemma~\ref{FSineq.II} is used essentially as the key ingredient for Lemma~\ref{convergence result}

The following inequality gives an interpolation
between fractional Sobolev semi-norms and the integral of time derivative.

\begin{lem}[Space-time fractional Sobolev inequality]\label{FSineq.II}
Let $K \subset \bR^n$ be a bounded domain and let $I:=(0,T)$ with $0<T<\infty$. Let $s^\prime, \bar{s}$ be two exponents obeying $0<s^\prime<\bar{s} <1$. Then it holds 
\begin{equation}\label{FSineq.2}
[f]_{W^{s^\prime,1}(K_T)} \leq C(n,n,s^\prime, \bar{s}) \left(\|\partial_t f\|_{L^1(K_T)}+\int_I[f(\cdot, t)]_{W^{\bar{s},1}(K)}\,dt\right),
\end{equation}
where the integrals of a function $f$ in the right-hand side is assumed to be finite.
\end{lem}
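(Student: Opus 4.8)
The plan is to estimate the space-time Gagliardo seminorm $[f]_{W^{s',1}(K_T)}$ directly from its definition, by splitting the kernel $\bigl(\sqrt{|x-x'|^2+(t-t')^2}\bigr)^{-(n+1+s')}$ according to whether the spatial or the temporal increment dominates. Concretely, write $K_T\times K_T = A_{\mathrm{sp}}\cup A_{\mathrm{t}}$, where $A_{\mathrm{sp}}=\{|x-x'|\ge |t-t'|\}$ and $A_{\mathrm{t}}=\{|x-x'|<|t-t'|\}$. On $A_{\mathrm{sp}}$ the radius $\sqrt{|x-x'|^2+(t-t')^2}$ is comparable to $|x-x'|$; on $A_{\mathrm{t}}$ it is comparable to $|t-t'|$. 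This reduces the proof to two one-variable estimates.

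On the spatial piece, after bounding the kernel by $c\,|x-x'|^{-(n+1+s')}$ I would integrate in $t'$ first over the interval $I$: for fixed $x,x',t$ I need $\int_I |f(x,t)-f(x',t')|\,dt'$, which I split as $\int_I|f(x,t)-f(x,t')|\,dt' + \int_I|f(x,t')-f(x',t')|\,dt'$. The first term is controlled by the fundamental theorem of calculus in the time variable, $|f(x,t)-f(x,t')|\le \int_I|\partial_t f(x,\tau)|\,d\tau$, giving (after integrating in $x$ and in $x'$ against $|x-x'|^{-(n+1+s')}$, and using that $K$ is bounded so $\int_{|x-x'|\ge\text{something}}$ converges only once we cut at the scale $|t-t'|$) a contribution bounded by $C\|\partial_t f\|_{L^1(K_T)}$; the second term, integrated against the spatial kernel cut at scale $|t-t'|\le\rho:=|x-x'|$, produces $\int_I [f(\cdot,t')]_{W^{\bar s,1}(K)}\,dt'$ after one checks the numerology $n+1+s' - 1 = n+s' < n+\bar s$, so that the extra factor $\rho^{\,1}$ from the $t'$-integration upgrades the exponent from $s'$ to something $\le\bar s$ and the radial integral $\int_0^{\mathrm{diam}\,K}\rho^{\,\bar s - s' -1}\,d\rho$ converges. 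The temporal piece $A_{\mathrm{t}}$ is handled symmetrically: bound the kernel by $c\,|t-t'|^{-(n+1+s')}$, integrate in $x'$ over $K$ (a bounded set, contributing a factor $|K|$ but only after restricting $|x-x'|<|t-t'|$, which costs $|t-t'|^{n}$), again split $|f(x,t)-f(x',t')|$ through the intermediate value $f(x,t')$, use $\int_I\int_I |f(x,t)-f(x,t')|\,|t-t'|^{-(n+1+s')}|t-t'|^n\,dt\,dt' \le \int_I\int_I\int_I |\partial_t f(x,\tau)|\,|t-t'|^{-1-s'}\,d\tau\,dt\,dt'$ — here one must be slightly careful because $\int|t-t'|^{-1-s'}$ diverges at $0$, so instead estimate $|f(x,t)-f(x,t')|\le\int_{\min(t,t')}^{\max(t,t')}|\partial_t f|\,d\tau$ and perform the $t'$-integration exactly, turning $|t-t'|^{-1-s'}$ integrated over an interval of length $\gtrsim|\tau-t|$ into $|\tau-t|^{-s'}$, which is integrable in $\tau$ over the bounded interval $I$. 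This yields the $\|\partial_t f\|_{L^1(K_T)}$ bound; the remaining $f(x,t')-f(x',t')$ term on $A_{\mathrm t}$ gives back $\int_I[f(\cdot,t')]_{W^{\bar s,1}(K)}\,dt'$ by the same convergence bookkeeping as before.

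The main obstacle I anticipate is precisely this bookkeeping of exponents: one must arrange the splitting of the joint kernel and the order of integration so that (i) every radial/temporal integral that appears actually converges — which is exactly where the hypothesis $s'<\bar s$ is consumed, and where the boundedness of $K$ and finiteness of $T$ enter to make the ``far'' parts of the kernel integrable — and (ii) the singular factor $|t-t'|^{-1-s'}$ arising from the time direction is tamed by integrating the fundamental-theorem-of-calculus bound \emph{exactly} in one variable rather than estimating it crudely. I would therefore be careful to do the $\partial_t f$ estimates by writing $|f(z,t)-f(z,t')|\le\bigl|\int_t^{t'}|\partial_t f(z,\tau)|\,d\tau\bigr|$ and then applying Fubini to swap the $\tau$-integral outward before touching the singular kernel. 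Once the two pieces $A_{\mathrm{sp}}$ and $A_{\mathrm t}$ are each bounded by $C(\|\partial_t f\|_{L^1(K_T)}+\int_I[f(\cdot,t)]_{W^{\bar s,1}(K)}\,dt)$, adding them gives \eqref{FSineq.2} with a constant depending only on $n$, $s'$, $\bar s$ and (through $\mathrm{diam}\,K$ and $T$) on $K$ and $I$.

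A clean alternative, which I would mention as a remark, is to deduce \eqref{FSineq.2} from the known fact that for a function on the $(n+1)$-dimensional cylinder that is weakly differentiable in the last variable one has $W^{1,1}_t\cap L^1_t W^{\bar s,1}_x \hookrightarrow W^{s',1}$ for $s'<\bar s<1$ — i.e. an interpolation statement — but since the paper only has the plain fractional Sobolev embeddings of Lemmata \ref{Sobolev embedding}--\ref{space-time Sobolev embedding} at its disposal, carrying out the direct kernel-splitting argument above is the more self-contained route and is what I would write up in full.
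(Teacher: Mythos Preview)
Your approach is correct and would yield the inequality, but it differs from the paper's argument. You decompose the integration domain $K_T\times K_T$ into the regions $\{|x-x'|\ge|t-t'|\}$ and $\{|x-x'|<|t-t'|\}$, bound the full kernel by the dominant variable on each piece, and then apply the triangle inequality $|f(x,t)-f(x',t')|\le|f(x,t)-f(x,t')|+|f(x,t')-f(x',t')|$; this produces four terms, and the spatial-difference terms land in $[f(\cdot,t')]_{W^{s',1}(K)}$, which you then dominate by the $W^{\bar s,1}$ seminorm using $s'<\bar s$ and the boundedness of $K$. The paper instead applies the triangle inequality on $f$ \emph{first}, obtaining just two integrals $I$ (pure time difference) and $II$ (pure space difference), and for each of them splits the kernel \emph{exponent} rather than the domain: for $I$ one factors $(n+1+s')=(1+\bar s)+(n+s'-\bar s)$ and uses $\sqrt{|x-x'|^2+(t-t')^2}\ge\max(|x-x'|,|t-t'|)$ to bound the two factors by $|t-t'|^{-(1+\bar s)}$ and $|x-x'|^{-(n+s'-\bar s)}$ separately; for $II$ one factors $(n+1+s')=(n+\bar s)+(1+s'-\bar s)$ likewise. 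This keeps the bookkeeping to two terms instead of four and makes the $W^{\bar s,1}$ seminorm appear directly in $II$; your region-splitting is a bit more hands-on but has the advantage of making the separation of the space-time singularity into purely spatial and purely temporal pieces geometrically explicit.
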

\begin{proof}
%
To begin, we divide the Gagliardo semi-norm into two terms:
\begin{align}\label{B2 eq.1}
[f]_{W^{s^\prime, 1}(K_T)} &\leq \iint_{I \times I}\iint_{K \times K}\dfrac{|f(x,t)-f(x,t^\prime)|}{\left(\sqrt{|x-x'|^2+(t-t')^2}\right)^{(n+1)+s^\prime}}\,dxdx'dtdt'  \notag\\[3mm]
&+ \iint_{I \times I}\iint_{K \times K}\dfrac{|f(x,t^\prime)-f(x^\prime,t^\prime)|}{\left(\sqrt{|x-x'|^2+(t-t')^2}\right)^{(n+1)+s^\prime}}\,dxdx'dtdt'\notag\\[3mm]
&=:I+II.
\end{align}
By the assumption it plainly holds $1+(s^\prime-\bar{s}) >0$.
%
%
Under this condition, the integration $I$ is simply estimated as
\begin{align}\label{B2 eq.2}
I&=\iint_{I\times I}\iint_{K \times K} \dfrac{|f(x,t)-f(x,t^\prime)|}{\left(\sqrt{|x-x'|^2+(t-t')^2}\right)^{1+\bar{s}}} \times \frac{dxdx'dtdt' }{\left(\sqrt{|x-x'|^2+(t-t')^2}\right)^{n+(s^\prime-\bar{s})}} \notag\\[3mm]
&\leq \int_{K}\left(\,\,\iint_{I\times I}\dfrac{|f(x,t^\prime)-f(x,t^\prime)|}{|t-t'|^{1+\bar{s}}} \,dtdt' \right)\left(\,\int_{K_{x^\prime}}\frac{dx^\prime}{|x-x'|^{n+(s^\prime-\bar{s})}}  \right) \,dx \notag\\[3mm]
&= \int_{K}\left[\,\,\int_I \left(\,\,\int_{\{\tau \in (-T,T)\,: \,t+\tau \in I\}}\dfrac{|f(x,t^\prime)-f(x,\tau+t)|}{|t-t'|^{1+\bar{s}}} \,d\tau \right)dt' \right]\left(\,\int_{K_{x^\prime}}\frac{dx^\prime}{|x-x'|^{n+(s^\prime-\bar{s})}}  \right) \,dx.
\end{align}
Due to the fundamental theorem of calculus, 
\begin{align*}
\big|f(x,t)-f(x,\tau+t)\big| = \left|\int_0^1 \dfrac{d}{d\sigma} f(x,t+\sigma\tau) \,d\sigma \right| &\leq |\tau| \int_0^1 \Big|\partial_t f(x,t+\sigma \tau)\Big|\,d\tau,
\end{align*}
which, together with~\eqref{B2 eq.1} and Fubini's theorem, in turn implies that
\begin{align}\label{B2 eq.3}
I &\leq \int_{K}\int_0^1\left[\,\,\int_{-T}^T\dfrac{1}{|\tau|^{\bar{s}}} \left(\,\,\int_{\{t \in I\,: \,t+\tau \in I\}}\Big|\partial_tf(x, t+\sigma \tau)\Big| \,dt \right)d\tau\right]\left(\,\int_{K_{x^\prime}}\frac{dx^\prime}{|x-x'|^{n+(s^\prime-\bar{s})}}  \right) \,d\sigma dx \notag \\[3mm]
&\leq \left(\,\,\int_{-T}^T\dfrac{1}{|\tau|^{\bar{s}}} d\tau \right) \int_K\|\partial_tf(x,\cdot)\|_{L^1(I)}\left(\,\int_{K_{x^\prime}}\frac{dx^\prime}{|x-x'|^{n+(s^\prime-\bar{s})}}  \right) \,dx.
\end{align}
Via change of variable $\rho=|x-x'|$ we have
\begin{align}\label{B2 eq.4}
\int_{K_{x^\prime}}\frac{dx^\prime}{|x-x'|^{n+(s^\prime-\bar{s})}}&\leq C(n)\int_0^{\diam K} \frac{d\rho}{\rho^{1+(s^\prime-\bar{s})}} \notag\\[2mm]
&=\frac{(\diam K)^{\bar{s}-s^\prime}}{\bar{s}-s^\prime}<\infty.
\end{align}
Analogously, by the condition $0 <\bar{s}<1$, the integral $\int_{-T}^T\frac{1}{|\tau|^{\bar{s}}} d\tau$ appearing in~\eqref{B2 eq.3} takes a finite value depending $T$ only. Combining this with~\eqref{B2 eq.3} and~\eqref{B2 eq.4}, we get
\begin{align}\label{B2 eq.5}
I &\leq C(n,s^\prime,\bar{s},T)\int_K\|\partial_tf(x,\cdot)\|_{L^1(I)}\,dx \notag\\
&=C(n,s^\prime,\bar{s},T)\|\partial_tf\|_{L^1(K \times I)}.
\end{align}

We now turn to evaluate $II$. The integration $II$ is estimated as
\begin{align}\label{B2 eq.6}
II& \leq \iint_{I\times I}\iint_{K \times K} \dfrac{|f(x,t^\prime)-f(x^\prime,t^\prime)|}{\left(\sqrt{|x-x'|^2+(t-t')^2}\right)^{n+\bar{s}}} \times \frac{dxdx'dtdt' }{\left(\sqrt{|x-x'|^2+(t-t')^2}\right)^{1+s^\prime-\bar{s}}}  \notag\\[3mm]
&\leq \iint_{I \times I}\iint_{K \times K} \dfrac{|f(x,t^\prime)-f(x^\prime,t^\prime)|}{|x-x'|^{n+\bar{s}}}\times \frac{dxdx'dtdt' }{|t-t'|^{1+(s^\prime-\bar{s})}}  \notag\\[3mm]
&=\int_{I}\left(\,\int_{I_t}\dfrac{dt}{|t-t'|^{1+(s^\prime-\bar{s})}} \right)[f(\cdot, t^\prime)]_{W^{\bar{s},1}(K)}\,dt^\prime \notag\\[3mm]
&\leq C \int_{I}\left(\,\,\int_{-T}^{T}\dfrac{d\tau}{|\tau|^{1+(s^\prime-\bar{s})}} \right)\,[f(\cdot, t^\prime)]_{W^{\bar{s},1}(\bR^n)}\,dt^\prime \notag\\[3mm]
&=C(n,s,\bar{s},T)\int_{I}[f(\cdot, t^\prime)]_{W^{\bar{s},1}(K)}\,dt^\prime,
\end{align}
where, in the fourth line, $\displaystyle \int_{-T}^{T}\dfrac{d\tau}{|\tau|^{1+(s^\prime-\bar{s})}} =\frac{2T^{\bar{s}-s^\prime}}{\bar{s}-s^\prime}$ is used.
Merging~\eqref{B2 eq.5}, ~\eqref{B2 eq.6} in~\eqref{B2 eq.2}, we end up with the conclusion~\eqref{FSineq.2}.
\end{proof}
%

%

\end{document}